\numberwithin{equation}{section}
\newtheorem{theorem}[equation]{Theorem}
\newtheorem{proposition}[equation]{Proposition}
\newtheorem{lemma}[equation]{Lemma}
\newtheorem{corollary}[equation]{Corollary}
\theoremstyle{definition}
\newtheorem{rmk}[equation]{Remark}
\newenvironment{remark}[1][]{\begin{rmk}[#1] \pushQED{\qed}}{\popQED \end{rmk}}
\newtheorem{eg}[equation]{Example}
\newenvironment{example}[1][]{\begin{eg}[#1] \pushQED{\qed}}{\popQED \end{eg}}
\newtheorem{defnaux}[equation]{Definition}
\newenvironment{definition}[1][]{\begin{defnaux}[#1]\pushQED{\qed}}{\popQED \end{defnaux}}
\newcommand{\cC}{\mathcal{C}}
\newcommand{\cD}{\mathcal{D}}
\newcommand{\sE}{\mathscr{E}}
\newcommand{\bF}{\mathbf{F}}
\newcommand{\fF}{\mathfrak{F}}
\newcommand{\bN}{\mathbf{N}}
\newcommand{\cO}{\mathcal{O}}
\newcommand{\bQ}{\mathbf{Q}}
\newcommand{\fS}{\mathfrak{S}}
\newcommand{\bT}{\mathbf{T}}
\newcommand{\fT}{\mathfrak{T}}
\newcommand{\cX}{\mathcal{X}}
\newcommand{\cY}{\mathcal{Y}}
\newcommand{\bZ}{\mathbf{Z}}
\newcommand{\fb}{\mathfrak{b}}
\newcommand{\fm}{\mathfrak{m}}
\newcommand{\fp}{\mathfrak{p}}
\newcommand{\arxiv}[1]{\href{http://arxiv.org/abs/#1}{{\tiny\tt arXiv:#1}}}
\newcommand{\DOI}[1]{\href{http://doi.org/#1}{\color{purple}{\tiny\tt DOI:#1}}}
\newcommand{\defn}[1]{\emph{#1}}
\let\ol\overline
\let\ul\underline
\renewcommand{\phi}{\varphi}
\renewcommand{\emptyset}{\varnothing}
\DeclareMathOperator{\im}{im} 
\DeclareMathOperator{\End}{End}
\DeclareMathOperator{\Aut}{Aut}
\DeclareMathOperator{\Hom}{Hom}
\DeclareMathOperator{\Spec}{Spec}
\newcommand{\id}{\mathrm{id}}
\renewcommand{\Vec}{\mathrm{Vec}}
\DeclareMathOperator{\Frac}{Frac}
\newcommand{\bbone}{\mathds{1}}
\newcommand{\uotimes}{\mathbin{\ul{\otimes}}}
\DeclareMathOperator{\Amalg}{Amalg}
\DeclareMathOperator{\udim}{\ul{dim}}
\DeclareMathOperator{\utr}{\ul{tr}}
\let\lbb\llbracket
\let\rbb\rrbracket
\newcommand{\myuline}[1]{%
  \uline{\phantom{#1}}%
  \llap{\contour{white}{#1}}%
}
\DeclareMathOperator{\uRep}{\text{\myuline{\rm Rep}}}
\DeclareMathOperator{\uPerm}{\ul{Perm}}
\tikzset{leaf/.style={circle,fill=black,draw,minimum size=1mm,inner sep=0pt}}
\tikzset{boron/.style={circle,fill=white,draw,minimum size=1mm,inner sep=0pt}}
\tikzset{chosen/.style={star, star points=17, star point ratio=20,fill=black,draw,minimum size=2.5mm,inner sep=0pt}}
\tikzset{chosen2/.style={fill=black,draw,minimum size=1.5mm,inner sep=0pt}}
\tikzset{mid/.style={circle,fill=black,draw,minimum size=0.05mm,inner sep=0pt}}
\title{Arboreal tensor categories}
\author{Nate Harman}
\author{Ilia Nekrasov}
\author{Andrew Snowden}
\thanks{AS was supported by NSF grant DMS-2301871.}
\date{March 17, 2024}
\begin{document}

\begin{abstract}
We introduce some new symmetric tensor categories based on the combinatorics of trees: a discrete family $\cD(n)$, for $n \ge 3$ an integer, and a continuous family $\cC(t)$, for $t \ne 1$ a complex number. The construction is based on the general oligomorphic theory of Harman--Snowden, but relies on two non-trivial results we establish. The first determines the measures for the class of trees, and the second is a semi-simplicity theorem. These categories have some notable properties: for instance, $\cC(t)$ is the first example of a 1-parameter family of pre-Tannakian categories of superexponential growth that cannot be obtained by interpolating categories of moderate growth.
\end{abstract}

\maketitle
\tableofcontents

\section{Introduction}

The purpose of this paper is to introduce some new symmetric tensor categories based on the combinatorics of trees. The construction is based on the general framework of \cite{repst}, but depends on two non-trivial results, which constitute the main theorems of this paper. The first is a combinatorial result about trees, namely, the classification of measures. The second is an algebraic result asserting that certain algebras and categories are semi-simple. We discuss these two results in some detail in \S \ref{ss:intro-meas} and \S \ref{ss:intro-ss}, and then discuss the motivation and significance of our work in \S \ref{ss:intro-motiv}.

\subsection{Measures on trees} \label{ss:intro-meas}

Suppose that $T_1$ and $T_2$ are two finite trees, with leaves indexed by the disjoint sets $I_1$ and $I_2$. An \defn{amalgamation} of $T_1$ and $T_2$ is a tree $T$ with leaves labeled by $I_1 \cup I_2$ such that the leaves labeled by $I_{\alpha}$ span a subtree of $T$ homeomorphic to $T_{\alpha}$ for $\alpha=1,2$. Leaves of $T$ can carry two labels (one from $I_1$ and one from $I_2$), meaning the copies of $T_1$ and $T_2$ inside $T$ are allowed to intersect. More generally, if $T_1$ and $T_2$ have a common subtree $T_0$, then an amalgamation of $T_1$ and $T_2$ \defn{over} $T_0$ is an amalgamation where the two copies of $T_0$ are identified. The study of amalgamations of trees originates in Cameron's work \cite{CameronTrees}.

As an example, suppose $T_1$ and $T_2$ are the following trees:
\begin{displaymath}
\tikz{
\node[leaf,label={\tiny 1}] (x) at (-0.5,0) {};
\node[leaf,label={\tiny 2}] (y) at (.5,0) {};
\path[draw] (x)--(y);
}
\qquad\qquad
\tikz{
\node[boron] (A) at (0,0) {};
\node[leaf,label={\tiny 3}] (x) at (-0.5,0) {};
\node[leaf,label={\tiny 4}] (y) at (.5,0) {};
\node[leaf,label={\tiny 5}] (a) at (0,.5) {};
\path[draw] (A)--(x);
\path[draw] (A)--(y);
\path[draw] (A)--(a);
}
\end{displaymath}
The following three trees are examples of amalgamations:
\begin{displaymath}
\tikz[baseline=0pt]{
\node[boron] (A) at (0,0) {};
\node[leaf,label={\tiny 1/3}] (x) at (-0.5,0) {};
\node[leaf,label={\tiny 2/4}] (y) at (.5,0) {};
\node[leaf,label={\tiny 5}] (a) at (0,.5) {};
\path[draw] (A)--(x);
\path[draw] (A)--(y);
\path[draw] (A)--(a);
}
\qquad\qquad
\tikz[baseline=0pt]{
\node[boron] (A) at (0,0) {};
\node[boron] (B) at (.5,0) {};
\node[boron] (C) at (1,0) {};
\node[leaf,label={\tiny 1}] (a1) at (-0.5,0) {};
\node[leaf,label={\tiny 2}] (a2) at (0,.5) {};
\node[leaf,label={\tiny 3}] (b) at (.5,.5) {};
\node[leaf,label={\tiny 4}] (c1) at (1,.5) {};
\node[leaf,label={\tiny 5}] (c2) at (1.5,0) {};
\path[draw] (A)--(B);
\path[draw] (B)--(C);
\path[draw] (A)--(a1);
\path[draw] (A)--(a2);
\path[draw] (B)--(b);
\path[draw] (C)--(c1);
\path[draw] (C)--(c2);
}
\qquad\qquad
\tikz[baseline=-7pt]{
\node[boron] (X) at (0,0) {};
\node[leaf,label=right:{\tiny 1}] (A) at (.46,.154) {};
\node[leaf,label={\tiny 2}] (B) at (0,.5) {};
\node[leaf,label=left:{\tiny 3}] (C) at (-.45,.154) {};
\node[leaf,label=left:{\tiny 4}] (D) at (-.294,-.404) {};
\node[leaf,label=right:{\tiny 5}] (E) at (.294,-.404) {};
\draw (X)--(A);
\draw (X)--(B);
\draw (X)--(C);
\draw (X)--(D);
\draw (X)--(E);
}
\end{displaymath}
There are 56 amalgamations in total. If $T_0$ is the one-point tree, identified with vertex~1 in $T_1$ and vertex~3 in $T_2$, then the first amalgamation above is over $T_0$, while the other two are not. There are six amalgamations over $T_0$.

A \defn{regular measure} on the class $\fT$ of trees, valued in a ring $k$, is a rule $\rho$ that assigns to each (isomorphism class of) tree $T$ a unit $\rho(T)$ of $k$ such that the empty tree is assigned~1, and the following \defn{amalgamation equation} holds: given $T_1$ and $T_2$ with a common subtree $T_0$, we have
\begin{displaymath}
\frac{\rho(T_1) \rho(T_2)}{\rho(T_0)} = \sum_{i=1}^n \rho(U_i),
\end{displaymath}
where $U_1, \ldots, U_n$ are the amalgamations of $T_1$ and $T_2$ over $T_0$. For example, if $T_1$ and $T_2$ are as in the previous paragraph and $T_0$ is the empty subtree, then the amalgamation equation takes the form
\begin{align*}
\rho\big(\,
\tikz[baseline=-2pt]{
\node[leaf] (x) at (-0.25,0) {};
\node[leaf] (y) at (.25,0) {};
\path[draw] (x)--(y);
}
\,\big) \cdot \rho\big(\,
\tikz[baseline=-3pt]{
\node[boron] (A) at (0,0) {};
\node[leaf] (x) at (-0.433,-0.25) {};
\node[leaf] (y) at (0.433,-0.25) {};
\node[leaf] (a) at (0,.5) {};
\path[draw] (A)--(x);
\path[draw] (A)--(y);
\path[draw] (A)--(a);
}
\,\big) =&
\rho\big(\,
\tikz[baseline=-3pt]{
\node[boron] (X) at (0,0) {};
\node[leaf] (A) at (.46,.154) {};
\node[leaf] (B) at (0,.5) {};
\node[leaf] (C) at (-.45,.154) {};
\node[leaf] (D) at (-.294,-.404) {};
\node[leaf] (E) at (.294,-.404) {};
\draw (X)--(A);
\draw (X)--(B);
\draw (X)--(C);
\draw (X)--(D);
\draw (X)--(E);
}
\,\big) + 15 \rho\big(\,
\tikz[baseline=-3pt]{
\node[boron] (A) at (0,0) {};
\node[boron] (B) at (.5,0) {};
\node[boron] (C) at (1,0) {};
\node[leaf] (a1) at (-0.35,0.35) {};
\node[leaf] (a2) at (-0.35,-0.35) {};
\node[leaf] (b) at (.5,.5) {};
\node[leaf] (c1) at (1.35,0.35) {};
\node[leaf] (c2) at (1.35,-0.35) {};
\path[draw] (A)--(B);
\path[draw] (B)--(C);
\path[draw] (A)--(a1);
\path[draw] (A)--(a2);
\path[draw] (B)--(b);
\path[draw] (C)--(c1);
\path[draw] (C)--(c2);
}
\,\big) + 10 \rho\big(\,
\tikz[baseline=-3pt]{
\node[boron] (A) at (0,0) {};
\node[boron] (C) at (0.5,0) {};
\node[leaf] (a1) at (-0.35,0.35) {};
\node[leaf] (a2) at (-0.35,-0.35) {};
\node[leaf] (c1) at (0.85,0.35) {};
\node[leaf] (c2) at (1,0) {};
\node[leaf] (c3) at (0.85,-0.35) {};
\path[draw] (A)--(C);
\path[draw] (A)--(a1);
\path[draw] (A)--(a2);
\path[draw] (C)--(c1);
\path[draw] (C)--(c2);
\path[draw] (C)--(c3);
} \,\big) \\
&+ 6 \rho\big(\,
\tikz[baseline=-3pt]{
\node[boron] (A) at (0,0) {};
\node[leaf] (x) at (-.5,0) {};
\node[leaf] (y) at (.5,0) {};
\node[leaf] (a) at (0,.5) {};
\node[leaf] (b) at (0,-.5) {};
\path[draw] (A)--(x);
\path[draw] (A)--(y);
\path[draw] (A)--(a);
\path[draw] (A)--(b);
}
\,\big) + 18 \rho\big(\,
\tikz[baseline=-3pt]{
\node[boron] (A) at (0,0) {};
\node[boron] (C) at (.5,0) {};
\node[leaf] (a1) at (-0.35,0.35) {};
\node[leaf] (a2) at (-0.35,-0.35) {};
\node[leaf] (c1) at (0.85,0.35) {};
\node[leaf] (c2) at (0.85,-0.35) {};
\path[draw] (A)--(C);
\path[draw] (A)--(a1);
\path[draw] (A)--(a2);
\path[draw] (C)--(c1);
\path[draw] (C)--(c2);
}
\,\big) + 6 \rho\big(\,
\tikz[baseline=-3pt]{
\node[boron] (A) at (0,0) {};
\node[leaf] (x) at (-0.433,-0.25) {};
\node[leaf] (y) at (0.433,-0.25) {};
\node[leaf] (a) at (0,.5) {};
\path[draw] (A)--(x);
\path[draw] (A)--(y);
\path[draw] (A)--(a);
}
\,\big).
\end{align*}
The coefficients represent the number of amalgamations with the given isomorphism type. More generally, a \defn{measure} on $\fT$ assigns to each embedding $T' \subset T$ of trees a quantity in $k$ such that some conditions hold (see Definition~ \ref{defn:meas}); a regular measure $\rho$ yields a measure by assigning to $T' \subset T$ the quantity $\rho(T)/\rho(T')$.

Determining the (regular) measures for $\fT$ is an important problem, since it is exactly the input required to build a tensor category (see \S \ref{ss:intro-ss}). It is also challenging: one essentially has a variable for each tree, and a non-linear equation for each amalgamation situation; moreover, we have seen that, even in simple situations, these equations are combinatorially quite complicated. Our first result is a complete solution of this problem. To state the answer, we first note that there is a universal measure valued in a ring $\Theta(\fT)$, in the sense that giving a measure valued in $k$ is equivalent to giving a ring homomorphism $\Theta(\fT) \to k$. Thus to determine all measures, it suffices to compute $\Theta(\fT)$. This is what we do:

\begin{theorem} \label{mainthm}
We have an isomorphism $\Theta(\fT) \cong \bZ[u,v]/(uv)$.
\end{theorem}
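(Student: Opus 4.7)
My plan proceeds in three stages: first, find a finite set of generators for $\Theta(\fT)$; second, derive the relation $uv = 0$; third, confirm this is the complete presentation by constructing enough measures.

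\emph{Stage 1 (generators).} Setting $a := \rho(\star)$ for the one-leaf tree, the amalgamation of $\star$ with itself over the empty subtree gives $a^2 = a + \rho(|)$, so $\rho(|) = a(a-1)$ for the edge $|$. Iterating by amalgamating a single leaf with the $k$-leaf star $K_{1,k}$ one finds $\rho(K_{1,k}) = a^{\underline{k}}$. But at four leaves the amalgamation of $|$ with itself over the empty subtree yields only the linear relation $\rho(K_{1,4}) + 3\rho(\mathrm{cat}_4) = a^{\underline{4}}$, where $\mathrm{cat}_4$ is the 4-leaf caterpillar, so a second generator is required. I would take $b$ to be a suitable normalization of $\rho(\mathrm{cat}_4)$. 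An induction on the number of leaves and internal vertices, using amalgamations over non-empty common subtrees to reduce complexity, should then show that $\rho(T)$ is a polynomial in $a$ and $b$ for every tree $T$, giving a presentation $\Theta(\fT) = \bZ[a,b]/J$ for some ideal $J$.

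\emph{Stage 2 (the relation).} Since $\Spec \bZ[u,v]/(uv)$ is the union of two affine lines meeting at the origin, the ideal $J$ should be principal, generated by a reducible quadratic that becomes $uv$ in suitable coordinates. I would search for this relation in the next nontrivial amalgamation, such as amalgamating $\mathrm{cat}_4$ with $\star$ or amalgamating $Y$ with itself over the empty subtree. After substituting the values from Stage 1 for all smaller constituents, the resulting polynomial equation in $a$ and $b$ should factor, yielding $uv = 0$ after a change of coordinates. This is the main obstacle: the combinatorics of amalgamations grows rapidly, and finding the correct amalgamation together with the correct coordinates likely requires either heavy computation or a conceptual argument via the action of $\Aut$ of an infinite homogeneous tree.

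\emph{Stage 3 (universal measures).} To confirm $J = (uv)$, I would construct explicit measures valued in $\bZ[u]$ and $\bZ[v]$ respectively, witnessing the two irreducible components. One candidate is the \emph{leaf-only} measure $\rho(T) := t^{\underline{\ell(T)}}$ depending only on the leaf count $\ell(T)$; the amalgamation equation for this measure should reduce to a Vandermonde-type identity on falling factorials, and it corresponds to the continuous family $\cC(t)$ from the abstract. The other candidate is a measure based on the infinite $n$-regular tree, counting equivariant embeddings of finite subtrees, which should match the discrete family $\cD(n)$. Together these assemble into a ring homomorphism $\Theta(\fT) \to \bZ[u,v]/(uv)$ that must be an isomorphism by Stage 2.
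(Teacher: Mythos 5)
Your outline diverges fundamentally from the paper's route, and several concrete claims would fail. The paper does not work directly with the values $\rho(T)$: it works with one-point extension classes $[X,a]$ of \emph{marked} trees, and uses the machinery of \emph{separated vertices} (Propositions~\ref{prop:separated} and~\ref{prop:treesep}) to collapse $\Theta(\fT)$ onto a polynomial ring with only a few generators --- the stars $x_m$ and two small marked trees $y,z$ --- with an explicit relation set coming from the minimal L- and Q-data (Corollaries~\ref{cor:linear} and~\ref{cor:quadratic}). Your Stage~1 has no analogous collapse: the ``induction on the number of leaves and internal vertices'' that you allude to \emph{is} the entire difficulty, and the sketch offers no mechanism for why the exponentially growing supply of trees and amalgamations reduces to two parameters. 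The separated-vertex theory is precisely the missing conceptual input, and without it Stage~2's ``search for the relation'' is left wide open.

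Two concrete claims are also wrong. First, the ``leaf-only'' measure $\rho(T) := t^{\underline{\ell(T)}}$ in Stage~3 is not a measure: the amalgamation equation for the one-point tree against $K_{1,3}$ over $\emptyset$ would force $t\cdot t^{\underline 3} = 3\,t^{\underline 3} + 4\,t^{\underline 4}$, i.e.\ $t=3$. The actual primary family $\mu_t$ (Proposition~\ref{prop:mu-formula}) depends on the valences of the nodes, not merely the leaf count. Second, the two components of $\Spec\Theta(\fT)$ do \emph{not} separate into the $\cC(t)$ family and the $\cD(n)$ family: both of those arise from the same component ($v=0$ in the paper's coordinates), with $\cD(n)$ the specialization of $\mu_t$ at $t=n$, while the other component ($u=0$) consists of nowhere-regular measures that produce no category in the paper. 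Even setting that aside, exhibiting two explicit measures only bounds the relation ideal from below; the upper bound --- showing no further relations lurk among the infinitely many amalgamation equations --- is the hard direction, and it is exactly what the refined presentation of \S\ref{ss:refined} delivers and your plan does not address.
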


See Theorem~\ref{thm:Theta} for additional details. The theorem implies that there are two 1-parameter families of measures. We let $\mu_t$ be the measure obtained by putting $v=0$ and $u=-(t-2)/(t-1)$. This measure is regular if $k$ is a field of characteristic~0 and $t \in k \setminus \bN$. We give an explicit formula for $\mu_t$ in Proposition~\ref{prop:mu-formula}; for example:
\begin{displaymath}
\mu_t \big( \, \tikz[baseline=3pt]{
\node[boron] (A) at (0,0) {};
\node[leaf] (x) at (-0.5,0) {};
\node[leaf] (y) at (.5,0) {};
\node[leaf] (a) at (0,.5) {};
\path[draw] (A)--(x);
\path[draw] (A)--(y);
\path[draw] (A)--(a);
}
\,\big) = -\frac{t(t-2)}{(t-1)^3}, \qquad
\mu_t \big( \, \tikz[baseline=3pt]{
\node[boron] (A) at (0,0) {};
\node[boron] (C) at (.5,0) {};
\node[leaf] (a1) at (-0.5,0) {};
\node[leaf] (a2) at (0,.5) {};
\node[leaf] (c1) at (.5,.5) {};
\node[leaf] (c2) at (1,0) {};
\path[draw] (A)--(C);
\path[draw] (A)--(a1);
\path[draw] (A)--(a2);
\path[draw] (C)--(c1);
\path[draw] (C)--(c2);
}
\,\big) = \frac{t(t-2)^2}{(t-1)^4}.
\end{displaymath}
The measures with $u=0$ are never regular, and we will not say much about them.

To prove this theorem, we apply a method due to the second author \cite{Nekrasov} that gives a general approach to computing $\Theta$. This method, combined with some combinatorial properties of trees that we establish, yields a finite presentation of the ring $\Theta(\fT)$. We include a complete exposition of the general method in \S \ref{s:fraisse}.

There is one more result about measures to mention. Let $\fT_n$ be the class of trees in which each vertex has valence at most $n$. One can then define (regular) measures on this class; in this case, one only considers amalgamations that belong to $\fT_n$. We show that the measure $\mu_t$ with $t=n$ induces a regular measure on $\fT_n$. In fact, it will be shown in \cite{Nekrasov} that $\fT_n$ has only two measures (the other one comes from the $u=0$ family).

\subsection{Arboreal algebras and categories} \label{ss:intro-ss}

Let $k$ be a field and fix $t \in k\setminus \{1\}$. Given a tree $T$, we define a $k$-algebra $A^T_k(t)$ as follows. The elements are formal linear combinations of self-amalgamations of $T$. Given self-amalgamations $X$ and $Y$, their product is defined by
\begin{displaymath}
[X] \cdot [Y] = \sum_Z c_{X,Y}^Z [Z], \qquad c_{X,Y}^Z = \sum_A \mu_t(Z \subset A),
\end{displaymath}
where $Z$ varies over all self-amalgamations of $T$, and $A$ varies of all triple self-amalgamations of $T$ such that $A_{12}=X$, $A_{23}=Y$, and $A_{13}=Z$, where $A_{ij}$ denotes the subtree spanned by the indices of type $i$ and $j$. See Example~\ref{ex:arboreal} for a more detailed explanation of multiplication. Properties of the measure $\mu_t$ ensure that the multiplication is associative and unital. We call $A^T_k(t)$ an \defn{arboreal algebra}. It is similar in some respects to other diagram algebras, like the partition algebras, but, as far as we know, it has not been previously considered.

More generally, we define a category based on trees, as follows. The objects are formal direct sums of trees; we write $\Vec_X$ for the object corresponding to the tree $X$. Morphisms $\Vec_X \to \Vec_Y$ are formal linear combinations of amalgamations of $X$ and $Y$. Composition is defined analogously to multiplication in the arboreal algebra; in particular, $A^T_k(t)$ is the endomorphism algebra of $\Vec_T$ in this category. This category carries a tensor product, defined on objects by $\Vec_X \otimes \Vec_Y=\bigoplus_Z \Vec_Z$, where the sum is over all amalgamations.

We define $\cC_k(t)$ to be the Karoubi envelope of the above category. There is also a variant $\cD_k(n)$, for $n \ge 3$ an integer, where we use the class $\fT_n$ of trees of bounded valence and the measure induced by $\mu_n$. We refer to these categories $\cC_k(t)$ and $\cD_k(n)$ as \defn{arboreal tensor categories}. Our second theorem establishes some basic properties of these categories.

\begin{theorem} \label{mainthm2}
Let $k$ be a field, let $n \ge 3$ be an integer, and let $t \in k \setminus \{1\}$.
\begin{enumerate}
\item If $n!$ is invertible in $k$ then $\cD_k(n)$ is a semi-simple pre-Tannakian category.
\item If $\operatorname{char}(k)=0$ and $t \not\in \bN$ then $\cC_k(t)$ is a semi-simple pre-Tannakian category.
\item Suppose $\operatorname{char}(k)=0$ and $t=n$. Then there is a tensor functor $\cC_k(t) \to \cD_k(n)$ that realizes the target as the semi-simplification of the source.
\end{enumerate}
\end{theorem}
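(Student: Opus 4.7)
For part~(a), the plan is to realize $\cD_k(n)$ as an instance of the oligomorphic construction of \cite{repst}. The class $\fT_n$ is hereditary and has joint embedding and amalgamation, so Fra\"iss\'e theory yields a homogeneous limit $\Omega_n$ (a universal countable tree of valence at most~$n$) whose automorphism group $G_n$ is oligomorphic. Under the dictionary of \cite{repst}, the measure $\mu_n$ on $\fT_n$ translates to a measure on $G_n$, and $\cD_k(n)$ is the Karoubi envelope of the resulting permutation category; regularity of $\mu_n$ (checked via Proposition~\ref{prop:mu-formula} at $t = n$) promotes this to a pre-Tannakian category by the general criterion in \cite{repst}. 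For semi-simplicity, each endomorphism algebra $\End(\Vec_T)$ is finite dimensional, with multiplication built from values of $\mu_n$ and $G_n$-orbit counts on pairs of embeddings; the relevant point stabilizers in $G_n$ act through finite quotients whose orders divide a power of $n!$, so when $n!$ is invertible Maschke's theorem combined with a Mackey-style reduction yields semi-simplicity of $\End(\Vec_T)$, hence of $\cD_k(n)$.

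For part~(b), there is no oligomorphic model when $t \notin \bN$, so the plan is a generic-flatness argument. For each fixed tree $T$, the arboreal algebra $A^T_k(t)$ is finite dimensional with structure constants that are rational functions of $t$ regular away from $t = 1$ and a discrete bad set. Viewing these as coming from an $R$-algebra $A^T_R$ over $R = \bZ[t, S^{-1}]$ for a suitable multiplicative set $S$, part~(a) applied over $\bQ$ at every integer $t = n \ge 3$ supplies infinitely many semi-simple specializations. Since semi-simplicity of a finite-dimensional algebra over a field is a Zariski-open condition (non-vanishing of the trace-form discriminant), it must hold on a Zariski-open subscheme of $\Spec(R)$ whose complement in $\bA^1_k$ consists of integer points only. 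Together with the general pre-Tannakian framework, this proves~(b).

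For part~(c), the plan is to build the functor from the inclusion $\fT_n \hookrightarrow \fT$ via $\Omega_n$. The homogeneous tree $\Omega_n$ is naturally an object of $\fT$, and the action of $G_n$ on it defines a tensor functor $F \colon \cC_k(n) \to \cD_k(n)$ on generators by sending $\Vec_T$ to the $G_n$-set of embeddings $T \hookrightarrow \Omega_n$: this is the zero object when $T \notin \fT_n$ and the standard generator of $\cD_k(n)$ otherwise. Compatibility with composition and tensor product reduces to the identity that $\mu_n$ on $\fT$ restricts to the measure on $\fT_n$ used to build $\cD_k(n)$. Since $\cD_k(n)$ is semi-simple by~(a), $F$ automatically factors through $\cC_k(n)/\cN$ where $\cN$ is the negligible ideal; the induced functor is essentially surjective by construction, and full and faithful by a dimension count of Hom spaces comparing $G_n$-orbits of amalgamations inside $\Omega_n$ with the non-negligible part of the corresponding Hom space in $\cC_k(n)$.

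The main obstacle is expected to be part~(b): while generic semi-simplicity on $\Spec(R)$ is formal once infinitely many good integer specializations are available, pinning down the bad locus to lie in $\bN$ (and not at other accidental roots of the discriminant) requires explicit knowledge of the trace form on $A^T_k(t)$. The needed input is the structure of $\mu_t$ from Proposition~\ref{prop:mu-formula} together with a careful combinatorial analysis of how the poles of $\mu_t$ at integer values of $t$ interact with the composition of self-amalgamations — without this control, the argument only yields semi-simplicity on some open subset, not on all of $k \setminus \bN$.
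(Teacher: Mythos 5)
Your proposal and the paper diverge most sharply in parts~(a) and~(b).

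For part~(a), you invoke ``Maschke's theorem combined with a Mackey-style reduction.'' The endomorphism algebras $\End(\Vec_T)$ are not group algebras of finite groups, so Maschke does not directly apply. The paper instead introduces a new semisimplicity criterion (Proposition~\ref{prop:ss-crit}) requiring two verifications: (SS1), that every prime divisor of $\#\Aut(T)$ divides $N=n!$ --- this is the part your sketch approximates by counting stabilizer quotients --- and also (SS2), that every proper inclusion $X \subsetneq Y$ in $\fT_n$ admits an extension $X \to Z$ with arbitrarily many lifts to $Y$. Condition (SS2) is what permits extending the measure to the enlarged stabilizer class $\sE^+$ and then invoking trace-nondegeneracy and trace-vanishing of nilpotents from \cite{repst}. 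Without an analogue of (SS2), your part~(a) argument is missing a key ingredient, and since part~(a) is what anchors everything else, this is not a small omission.

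For part~(b), you correctly identify the gap in your own sketch. The paper's fix is not a computation of the trace-form discriminant over $\bZ[t]$; it is a number-theoretic specialization-mod-$p$ argument (Lemma~\ref{lem:nonsing-2}). Given any algebraic $t\notin\bN$ and any $r$, there is a prime $\fp$ of $\bQ(t)$ over a rational prime $p$ with $t\equiv n \pmod{\fp}$ for some integer $n$ with $r<n<p$. Taking $r$ bigger than twice the maximal level in $\bT$, Corollary~\ref{cor:AB-isom} identifies $A^{\bT}$ mod $\fp$ with $B^{\bT}_{\cO/\fp}(n)$, semi-simple in characteristic $p>n$ by part~(a); a Tits-deformation lemma (Lemma~\ref{lem:nonsing-1}) then lifts semisimplicity to $\bQ(t)$. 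This handles each $t$ individually rather than trying to show the discriminant's zero locus lies exactly in $\bN$ --- which is the step your generic-flatness argument has no way to carry out. A secondary imprecision: over $\bQ$, $A^{\bT}_{\bQ}(n)$ is \emph{not} semi-simple for small integers $n$ (only its quotient $B^{\bT}$ is, via Corollary~\ref{cor:AB-isom} which needs $n\ge 2\cdot\text{level}$), so ``every integer $t=n\ge 3$'' as a supply of good specializations is too optimistic; the paper's argument is careful about this.

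Part~(c) matches the paper's approach (Proposition~\ref{prop:special} and Corollary~\ref{cor:special}): the functor kills objects of level $>n$, is full and essentially surjective, and semisimplicity of the target forces it to be the semisimplification. Your ``full and faithful by a dimension count'' is unnecessary and slightly off --- the functor is not faithful (it kills negligibles); fullness and essential surjectivity together with semisimplicity of the target suffice.
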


See \S \ref{s:arboreal} for more detailed statements, and \S \ref{ss:tensor} for terminology related to tensor categories. The construction of the arboreal tensor categories comes from the general machinery of \cite{repst}. The proof of the above theorem draws on some ideas from \cite{repst} as well, but also involves a few new ingredients. To prove~(a), we introduce a new semisimplicity criterion in \S \ref{ss:ss-crit}. To prove~(b), we identify certain specializations of the arboreal algebra $A^T_k(t)$ in positive characteristic with endomorphism algebras in $\cD_k(n)$, and thereby deduce that they are semi-simple. We obtain (c) as a special case of a more general result.

%This theorem is significant for two reasons. First, new pre-Tannakian categories are rather hard to come by, and so the theorem shows that we have found some rather special objects. And second, part (c) shows that the categories $\cC_k(t)$ can be viewed as interpolating the categories $\cD_k(n)$ as $n$ varies, in the sense of Deligne (see also Remark~\ref{rmk:ultra}). We elaborate on both of these points below.

\subsection{Motivation} \label{ss:intro-motiv}

Pre-Tannakian categories are a natural class of tensor categories generalizing representation categories of algebraic (super)groups. Deligne \cite{Deligne2} characterized classical representation categories as those pre-Tannakian categories of moderate (exponential) growth, in characteristic~0. Constructing pre-Tannakian categories of superexponential growth is a difficult and important problem.

Deligne \cite{Deligne2} explained how one can sometimes ``interpolate'' a sequence of pre-Tannakian categories to obtain a new such category. By interpolating the representation categories of finite symmetric groups, he constructed a category $\uRep(\fS_t)$, where $t$ is a complex number, that has superexponential growth. Shortly thereafter, Knop \cite{Knop, Knop2} generalized this procedure and gave a number of new examples of interpolation categories.

Recently, two of us \cite{repst} gave a new construction of pre-Tannakian categories based on oligomorphic groups, or, equivalently, Fra\"iss\'e classes. In that paper, we constructed (essentially) two new examples of pre-Tannakian categories; these were the first examples that cannot be obtained by the interpolation method. Since then a small number of additional examples have been found \cite{Kriz, dblexp, cantor}.

In this paper, we construct two infinite families of new pre-Tannakian categories: the discrete family $\cD_k(n)$ and the continuous family $\cC_k(t)$. Producing such a large number of new pre-Tannakian categories is interesting enough on its own to justify this paper. However, there two additional reasons that these categories are particularly interesting.

First, while we do not officially define the category $\cD_k(2)$, it really should be the Delannoy category studied in \cite{line}; see \S \ref{ss:further}(b). The results of \cite{line} show that this category is extremely special. For instance, all of its simple objects have categorical dimension $\pm 1$ and the Adams operations on its Grothendieck group are trivial. Since the categories $\cD_k(n)$ naturally extend the Delannoy category, it is quite likely that they too are special objects.

Second, the family $\cC_k(t)$ is the first example of a continuous family of pre-Tannakian categories that cannot be obtained by interpolating classical representation categories; all previously found examples isolated, in the sense that they do not deform. Moreover, these categories \emph{can} be obtained by applying Deligne's interpolation procedure to the discrete family $\cD_k(n)$. This is the first non-trivial example of Deligne interpolation where the initial categories are already of superexponential growth.

\subsection{Open problems}

We highlight three problems arising from this work.

(a) The approach to $\Theta(\fT)$ given in this paper provides a conceptual explanation for why one should be able to compute this ring: the abundance of separated vertices in trees means the computation greatly degenerates. However, this does not give a conceptual explanation for why $\Theta(\fT)$ is non-zero; indeed, we are only able to show this after going through all the details of the computation. Is there a less computational way to see that $\Theta(\fT)$ is non-zero in this case? We note that the first two authors have found an example of a Fra\"iss\'e class where $\Theta$ vanishes, namely, the variant of $\fT_3$ where one considers planar trees.

(b) Describe the structure of the categories $\cD_k(n)$ and $\cC_k(t)$. For instance, we would like to have a parametrization of the simple objects, a formula for their categorical dimensions, a combinatorial formula for tensor products, and so on. Knop \cite{Knop3} solved these problems for a general class of categories (which does not include the arboreal categories), and in the case of the Delannoy category $\cD_k(2)$ very detailed answers are provided in \cite{line}.

(c) Construct an abelian envelope of $\cC_k(t)$ when $t \in \bN$ is a singular parameter. There are some parallels between the categories $\cC_k(t)$ and Deligne's interpolation categories $\uRep(\fS_t)$, and such abelian envelopes were constructed in that setting by Comes and Ostrik \cite{ComesOstrik}. We would also like to have an abelian envelope for the tensor categories corresponding to the second family of measures on $\fT$ (the $u=0$ family). These are special cases of a general problem, which seems to be very important: namely, to construct abelian envelopes for the categories $\uPerm(\fF; \mu)$ when $\mu$ is not a quasi-regular measure. The quasi-regular case is reasonably well-understood by \cite{repst}, but outside of this case there are almost no results.

\subsection{Outline}

In \S \ref{s:fraisse} we discuss measures on general Fra\"iss\'e classes. We specialize to the case of trees in \S \ref{s:trees}. In \S \ref{s:tencat}, we discuss the tensor categories associated to general Fra\"iss\'e classes. Again, we specialize to the case of trees in \S \ref{s:arboreal}, where we prove the most important results in this paper. Finally, in \S \ref{s:example}, we work out some examples in arboreal tensor categories.

\subsection{Notation}

We list some of the most important notation here:
\begin{description}[align=right,labelwidth=2.25cm,leftmargin=!]
\item[ $k$ ] the coefficient ring
\item[ $\fF$ ] a general Fra\"iss\'e class (\S \ref{ss:fraisse})
\item[ $\fT$ ] the Fra\"iss\'e class of trees (\S \ref{ss:trees})
\item[ $\fT_n$ ] the Fra\"iss\'e class of trees of level $\le n$ (\S \ref{ss:trees})
\item[ $\mu_t$ ] the measure for $\fT$ with parameter $t$ (\S \ref{ss:goodmeas})
\item[ $\cC_k(t)$ ] the arboreal category with parameter $t$ (\S \ref{ss:treecat})
\item[ $\cD_k(n)$ ] the arboreal category of level $\le n$ (\S \ref{ss:treecat})
\end{description}

\section{Measures on Fra\"iss\'e classes} \label{s:fraisse}

\subsection{Overview}

In \cite{repst}, we constructed a tensor category $\uPerm_k(\fF; \mu)$ associated to a Fra\"iss\'e class $\fF$ and a measure $\mu$ for $\fF$ valued in the commutative ring $k$. The main difficulty in applying this construction, typically, is constructing measures for $\fF$. In \cite{repst}, we defined a universal measure valued in a certain ring $\Theta(\fF)$. If one can compute this ring, then one will understand all measures for $\fF$. The main result of this section, Theorem~\ref{thm:refined}, gives a presentation for $\Theta(\fF)$ that, in certain cases, is practically computable. We emphasize that this theorem is due to the second author, and will appear in his thesis \cite{Nekrasov}.

The main point of this paper is to construct the arboreal tensor categories. These will be derived from the general construction $\uPerm_k(\fF; \mu)$ by taking $\fF$ to be a Fra\"iss\'e class of trees. We will apply Theorem~\ref{thm:refined} to compute $\Theta(\fF)$ for these Fra\"iss\'e classes.

We briefly outline the contents of this section. In \S \ref{ss:fraisse} we recall the definition of Fra\"iss\'e class, and in \S \ref{ss:meas} we recall the definition of measure. In \S \ref{ss:Theta-present} we give an initial presentation for $\Theta(\fF)$. We refine this presentation in \S \ref{ss:separated} and \S \ref{ss:refined} to obtain the more useful presentation of the second author. We close in \S \ref{ss:induced} with one additional result about measures.

\subsection{Fra\"iss\'e classes} \label{ss:fraisse}

A \defn{signature} is a pair $\Sigma=(I,n)$ where $I$ is an index set and $n \colon I \to \bZ_{\ge 1}$ is a function. Fix a signature $\Sigma$. A \defn{(relational) structure} is a set $X$ together with for each $i \in I$ a subset $R_i$ of $X^{n(i)}$. We think of $R_i$ as a relation on $X$ of arity $n(i)$, i.e., $R_i$ takes as input elements $x_1, \ldots, x_{n(i)}$ in $X$ and returns true or false. For example, if $I=\{\ast\}$ is a singleton with $n(\ast)=2$ then a structure is a set with a single binary relation.

There is an obvious notion of isomorphism of structures. If $Y$ is a structure and $X$ is a subset of $Y$ then there is an induced structure on $X$: simply restrict each relation $R_i$ to $X$. An \defn{embedding} of structures $i \colon X \to Y$ is an injection of sets such that $i$ induces an isomorphism between $X$ and $i(X)$ equipped with the induced structure.

Suppose that $i_1 \colon X \to Y_1$ and $i_2 \colon X \to Y_2$ are embeddings of structures. An \defn{amalgamation} of $Y_1$ and $Y_2$ over $X$ is a triple $(Z, j_1, j_2)$ where $Z$ is a structure and $j_1 \colon Y_1 \to Z$ and $j_2 \colon Y_2 \to Z$ are embeddings such that $j_1 \circ i_1=j_2 \circ i_2$ and $Z=\im(j_1) \cup \im(j_2)$. An \defn{isomorphism} of amalgamations $(Z, j_1, j_2) \to (Z', j_1', j_2')$ is an isomorphism of structures $k \colon Z \to Z'$ such that $k \circ j_1=j_1'$ and $k \circ j_2=j_2'$. An amalgamation has no non-trivial automorphisms; in particular, if two amalgamations are isomorphic then they are uniquely isomorphic. We say that a class $\fF$ of structures (for a fixed signature) has the \defn{amalgamation property} if whenever $i_1 \colon X \to Y_1$ and $i_2 \colon X \to Y_2$ are embeddings in $\fF$, there is at least one amalgamation $(Z, j_1, j_2)$ with $Z$ in $\fF$.

\begin{definition}
A \defn{Fra\"iss\'e class} is a non-empty class $\fF$ of finite structures for a fixed signature satisfying the following properties:
\begin{enumerate}
\item For each $n \ge 0$, there are only finitely many isomorphism classes of $n$-element structures in $\fF$.
\item The class $\fF$ is \defn{hereditary}: if $X \in \fF$ and $Y$ embeds into $X$ then $Y \in \fF$.
\item The class $\fF$ satisfies the amalgamation property. \qedhere
\end{enumerate}
\end{definition}

Condition (a) is often relaxed to ``$\fF$ contains countably many isomorphism classes,'' but the stronger version we use is necessary when working with measures. If the signature is finite (meaning $I$ is a finite set) then (a) is automatic.

Fra\"iss\'e classes play an important role in certain parts of model theory. We refer to \cite{CameronBook, Macpherson} for general background in this direction.

\begin{example}
The class of all finite sets is a Fra\"iss\'e class; here the signature is empty.
\end{example}

\begin{example}
The class of all finite totally ordered sets is a Fra\"iss\'e class; here the signature consists of a single binary relation determining the total order.
\end{example}

\subsection{Measures} \label{ss:meas}

We now recall the following crucial notion, introduced in \cite[Definition 6.2]{repst}:

\begin{definition} \label{defn:meas}
Let $\fF$ be a Fra\"iss\'e class. A \defn{measure} for $\fF$ with values in a commutative ring $k$ is a rule $\mu$ that assigns to each embedding $i \colon X \to Y$ of structures in $\fF$ a value $\mu(i)$ in $k$ such that the following conditions hold:
\begin{enumerate}
\item If $i$ is an isomorphism then $\mu(i)=1$.
\item If $i \colon X \to Y$ and $j \colon Y \to Z$ are composable embeddings then $\mu(j \circ i)=\mu(j) \cdot \mu(i)$.
\item Let $i \colon X \to Y$ and $X \to X'$ be embeddings, and let $i'_{\alpha} \colon X' \to Y'_{\alpha}$, for $1 \le \alpha \le n$, be the various amalgamations. Then $\mu(i)=\sum_{\alpha=1}^n \mu(i'_{\alpha})$.
\end{enumerate}
A measure $\mu$ is \defn{regular} if $\mu(i)$ is a unit of $k$ for all $i$.
\end{definition}

Suppose that $\mu$ is a measure for $\fF$. Given a structure $X$ in $\fF$, we define $\mu(X)$ to be the value of $\mu$ on the embedding $\emptyset \to X$. If $i \colon X \to Y$ is an embedding, then by considering the composition $\emptyset \to X \to Y$, we see that $\mu(X) \cdot \mu(i) = \mu(Y)$. Thus if $\mu(X)$ is a unit of the ring $k$ then $\mu(i)=\mu(Y) \cdot \mu(X)^{-1}$. In particular, a regular measure is determined by its values on structures.

An important, and often difficult, problem is to determine the measures on a particular Fra\"iss\'e class $\fF$. The following is a useful object to consider in the context of this problem:

\begin{definition} \label{defn:theta}
Let $\fF$ be a Fra\"iss\'e class. We define a commutative ring $\Theta(\fF)$ as follows. For each embedding $i \colon X \to Y$ in $\fF$ there is a class $[i]$ in $\Theta(\fF)$. These classes satisfy relations reflecting the axioms of measure. Explicitly:
\begin{enumerate}
\item If $i$ is an isomorphism then $[i]=1$.
\item If $i$ and $j$ are composable then $[j \circ i] = [j] \cdot [i]$.
\item With notation as in Definition~\ref{defn:meas}(c), $[i]=\sum_{\alpha=1}^n [i'_{\alpha}]$.
\end{enumerate}
A bit more precisely, $\Theta(\fF)=R/I$, where $R$ is the polynomial ring on symbols $[i]$, and $I$ is the ideal generated by the relations in (a)--(c) above.
\end{definition}

There is a measure $\mu_{\rm univ}$ for $\fF$ valued in the ring $\Theta(\fF)$ defined by $\mu_{\rm univ}(i)=[i]$. This measure is universal in the following sense: if $\mu$ is an arbitrary measure for $\fF$ valued in a commutative ring $k$ then there is a unique ring homomorphism $\phi \colon \Theta(\fF) \to k$ such that $\mu = \phi \circ \mu_{\rm univ}$. In particular, to classify all measures for $\fF$, it is enough to compute the ring $\Theta(\fF)$.

\begin{remark}
In this paper, we are mostly focused on measures for a Fra\"iss\'e class $\fF$ and the associated ring $\Theta(\fF)$. In \cite{repst}, we instead focused on measures for an oligomorphic group $G$ and the associated ring $\Theta(G)$. If $G$ arises from $\fF$ then these two notions are closely related, but need not coincide exactly; see \S \ref{ss:oligo} for details.
\end{remark}

\subsection{A presentation for $\Theta$} \label{ss:Theta-present}

Fix a Fra\"iss\'e class $\fF$. We say that an embedding $Y \to X$ is a \defn{one-point extension} if $\# X = 1+ \# Y$. Every embedding can be factored into a sequence of one-point extensions, and so Definition~\ref{defn:theta}(b) implies that $\Theta(\fF)$ is generated by the corresponding classes of one-point extensions. We now give a presentation in terms of these generators.

We first make a slight shift in perspective. Specifying a one-point extension $Y \to X$ is the same as specifying a point of $X$, namely, the unique point not in $Y$. A \defn{marked structure} is a pair $(X,x)$ consisting of a structure $X$ in $\fF$ and a point $x \in X$. Given $(X,x)$, we have the associated one-point extension $i \colon X \setminus x \to X$, and we write $[X,x]$ for the class $[i]$ in $\Theta(\fF)$. We thus see that $\Theta(\fF)$ is generated by the classes of marked structures.

Let $P$ be the polynomial ring (over $\bZ$) with variables indexed by (isomorphism classes of) marked structures. We write $\lbb X,a \rbb$ for the variable corresponding to the marked structure $(X,a)$. We have a ring homomorphism $\phi \colon P \to \Theta(\fF)$ satisfying $\lbb X,a \rbb \mapsto [X,a]$, which is surjective by the previous paragraph. We now write down some elements in the kernel.

An \defn{L-datum} is a triple $((X,a), (Y,b), i)$ where $(X,a)$ and $(Y,b)$ are marked structures and $i \colon X \setminus a \to Y \setminus b$ is an isomorphism. We sometimes simply write $(X,Y)$ for brevity. Fix such a datum. Put $\delta=1$ if $i$ extends to an isomorphism $X \to Y$, and $\delta=0$ otherwise. Let $X_1, \ldots, X_n$ be the proper amalgamations of $X$ and $Y$ over $X \setminus a \cong Y \setminus b$, i.e., those in which $a$ and $b$ are not identified. We define $L((X,a), (Y,b), i)$ to be the element
\begin{displaymath}
\lbb X,a \rbb - \delta - \sum_{i=1}^n \lbb X_i, a \rbb
\end{displaymath}
of $P$. This belongs to the kernel of $\phi$ by Definition~\ref{defn:theta}(a). We let $\fb_1$ be the ideal of $P$ generated by these elements, as we vary over all L-data. If $(X,a)$ is a marked structure then we obtain an L-datum $((X,a),(X,a),\id)$, where $\id$ is the identity map on $X \setminus a$. We refer to this datum as the \defn{duplicate} of $(X,a)$, and write $L(X,a)$ for the corresponding element of $\fb_1$.

A \defn{Q-datum} is a triple $(X,a,b)$ where $X$ is a structure in $\fF$ and $a,b \in X$ are distinct elements. Given such a datum, we define $Q(X,a,b)$ to be the element
\begin{displaymath}
\lbb X, a \rbb \cdot \lbb X \setminus a, b \rbb - \lbb X, b \rbb \cdot \lbb X \setminus b, a \rbb
\end{displaymath}
of $P$. This belongs to the kernel of $\phi$ by Definition~\ref{defn:theta}(b). We define $\fb_2$ to be the ideal of $P$ generated by these elements, as we carry over all Q-data. We say that the Q-datum $(X,a,b)$ is \defn{symmetric} if there is an automorphism of $X$ switching $a$ and $b$. In this case, the element $Q(X,a,b)$ vanishes.

We put $\fb=\fb_1+\fb_2$. The following is the main result we are after.

\begin{proposition} \label{prop:Theta-present}
The map $\phi$ induces an isomorphism $P/\fb \to \Theta(\fF)$.
\end{proposition}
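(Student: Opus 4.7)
The plan is to construct an explicit two-sided inverse $\psi\colon\Theta(\fF)\to P/\fb$ to $\phi$. For each embedding $i\colon Y\hookrightarrow X$, choose any factorization into one-point extensions $Y=Y_0\subset Y_1\subset\cdots\subset Y_r=X$ with $Y_k=Y_{k-1}\cup\{a_k\}$, and set $\psi([i])=\prod_{k=1}^r\lbb Y_k,a_k\rbb \bmod \fb$. First I would verify that this is independent of the chosen factorization: any two orderings of the points of $X\setminus Y$ differ by a sequence of adjacent transpositions, and each such transposition changes the product by a generator of $\fb_2$, namely $Q(Y_{k+1},a_k,a_{k+1})$. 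Extending multiplicatively gives a ring map $\psi\colon R\to P/\fb$, where $R$ is the polynomial ring presenting $\Theta(\fF)=R/I$. For $\psi$ to descend to $\Theta(\fF)$, it must annihilate the three families of generators of $I$: the identity relation is immediate (the empty product equals $1$), and the composition relation follows by concatenating factorizations of $i$ and $j$ to obtain one for $j\circ i$.

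The main obstacle is the amalgamation relation: given embeddings $X\to Y$ and $X\to X'$ with amalgamations $(Y'_\alpha,j'_\alpha,i'_\alpha)_\alpha$, one must show $\psi([X\to Y])=\sum_\alpha\psi([X'\to Y'_\alpha])$ in $P/\fb$. My approach is induction on $(|X'|-|X|)+(|Y|-|X|)$. The cases where one arm is an isomorphism are trivial, and the case where both arms are one-point extensions is exactly the L-relation generating $\fb_1$ (together with its variant obtained by swapping the roles of the two arms). For the inductive step, without loss of generality $|X'|-|X|\ge 2$, and we factor $X\to X'$ through a one-point extension $X\subset X_1\subset X'$. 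Applied first to the arms $X\to X_1$ and $X\to Y$, the induction hypothesis yields $\psi([X\to Y])=\sum_\beta\psi([X_1\to V_\beta])$, where $V_\beta$ ranges over amalgamations of $X_1$ and $Y$ over $X$; applied again to each pair $X_1\to V_\beta$ and $X_1\to X'$, it refines this to a double sum over amalgamations $W_{\beta,\gamma}$ of $X'$ and $V_\beta$ over $X_1$.

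What remains is an iterative amalgamation lemma: the pairs $(\beta,\gamma)$ are in bijection with amalgamations $Y'_\alpha$ of $X'$ and $Y$ over $X$. Given $Y'_\alpha$, the substructure $V_\beta := j_1(X_1)\cup j_2(Y)$ is an amalgamation of $X_1$ and $Y$ over $X$, and $Y'_\alpha$ itself is an amalgamation of $X'$ and $V_\beta$ over $X_1$; this uses crucially that the definition only requires the total structure to be spanned by the two embeddings and places no constraint on the size of their intersection. The delicate point is to check that this assignment is a genuine bijection, and in particular that the $\delta$-terms absorbed into the induction correspond precisely to amalgamations in which points from distinct arms become identified, with no double counting. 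Once this bijection is in hand, $\psi$ descends to $\Theta(\fF)\to P/\fb$, and $\phi\circ\psi$ and $\psi\circ\phi$ are the identity on generators, so $\phi$ is the desired isomorphism.
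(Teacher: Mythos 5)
Your proof is correct and mirrors the paper's argument: both construct $\psi$ from one-point factorizations, use the $\fb_2$ relations to check independence of the factorization, and reduce the amalgamation axiom (c) to the one-point-extension case supplied by $\fb_1$. The difference is that the paper dispatches the final reduction with a one-line appeal to the oligomorphic picture (``the fiber product of a composition is the composition of fiber products''), whereas you spell out the underlying iterated-amalgamation bijection $(\beta,\gamma)\leftrightarrow\alpha$ combinatorially; the one point to nail down is that the ``WLOG $|X'|-|X|\ge 2$'' is not a free symmetry swap, since condition (c) is not symmetric in $Y$ and $X'$, but the mirror-image argument (factor $X\subset Y_1\subset Y$ and run the same bijection with the roles reversed) covers the remaining case $|Y|-|X|\ge 2$ just as well.
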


\begin{proof}
We define a homomorphism $\psi \colon \Theta(\fF) \to P/\fb$, by specifying a measure valued in $P/\fb$. Let $i \colon Y \to X$ be an embedding. Write $X \setminus Y = \{x_1, \ldots, x_n\}$, and let $X_a=Y \cup \{x_1, \ldots, x_a\}$ for $1 \le a \le n$. We define $\psi(i)$ to be the class of $\lbb X_1, x_1 \rbb \cdots \lbb X_n, x_n \rbb$ in $P/\fb$. This is independent of the enumeration of $X \setminus Y$ by the relations in $\fb_2$.

It is clear that $\psi$ satisfies conditions (a) and (b) from Definition~\ref{defn:meas}. We now verify (c). We are given that (c) holds when both embeddings are one-point extensions, by the relations in $\fb_1$. One can show that if (c) holds for two morphisms then it holds for their composition. (This is easier to see on the oligomorphic group side, using the fact that the fiber product of a composition is the composition of fiber products; see \S \ref{ss:oligo} and \cite[\S 4.5]{repst}.) Since every morphism is a composition of one-point extensions, (c) holds in general.

We have thus shown that $\psi$ is well-defined. It is clear that $\phi$ and $\psi$ are inverses, so the result follows.
\end{proof}

\begin{example}
Let $\fF$ be the Fra\"iss\'e class of finite sets. For each $n \ge 1$, there is a unique $n$-element marked structure up to isomorphism; call it $X_n$, and let $x_n$ be the corresponding variable of $P$. Every L-datum is isomorphic to a duplicate of some $X_n$. We have $L(X_n)=x_n-1-x_{n+1}$, and so $\fb_1$ is generated by these forms. Since all Q-data are symmetric, we have $\fb_2=0$. We thus find $\Theta \cong \bZ[x_1]$.
\end{example}

\subsection{Separated elements} \label{ss:separated}

Let $X$ be a structure in $\fF$, and let $a$ and $b$ be distinct elements of $X$. We say that $a$ and $b$ are \defn{separated} if $X$ is the unique amalgamation (in $\fF$) of $X \setminus a$ and $X \setminus b$ over $X \setminus \{a,b\}$. Note that the notion of separated depends on the ambient Fra\"iss\'e class $\fF$. The following is a key observation, due to \cite{Nekrasov}:

\begin{proposition}\label{prop:separated}
If $a$ and $b$ are separated elements of $X$ then $[X,a]=[X \setminus b, a]$ in $\Theta(\fF)$.
\end{proposition}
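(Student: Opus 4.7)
The approach is to apply the amalgamation axiom Definition~\ref{defn:meas}(c) directly to the universal measure $\mu_{\rm univ}$ of Definition~\ref{defn:theta}. Since (c) is one of the defining relations of $\Theta(\fF)$, the statement will reduce to choosing the right pair of embeddings and recognizing what the resulting sum collapses to.

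Concretely, my plan is to consider the two one-point embeddings $i \colon X \setminus \{a,b\} \to X \setminus b$ and $j \colon X \setminus \{a,b\} \to X \setminus a$, adding $a$ and $b$ respectively. Definition~\ref{defn:meas}(c) applied to these gives
\[
\mu_{\rm univ}(i) = \sum_{\alpha} \mu_{\rm univ}(j'_{\alpha}),
\]
where $j'_\alpha \colon X \setminus a \to Y'_\alpha$ enumerates the amalgamations in $\fF$ of $X \setminus b$ and $X \setminus a$ over $X \setminus \{a,b\}$. Note that $X$ itself, with the canonical inclusions from $X \setminus a$ and $X \setminus b$, is such an amalgamation: both inclusions restrict to the identity on $X \setminus \{a,b\}$, and $(X \setminus a) \cup (X \setminus b) = X$ since $a \ne b$. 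By the separated hypothesis, $X$ is the unique amalgamation up to isomorphism, so the sum reduces to a single term.

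It then remains to unwind the notation. The left side $\mu_{\rm univ}(i)$ equals $[X \setminus b, a]$, since $i$ is the one-point extension adding $a$; and the single term on the right, $\mu_{\rm univ}(X \setminus a \to X)$, equals $[X, a]$, since this embedding also adds the point $a$ (the element of $X \setminus b$ missing from the common subset $X \setminus \{a,b\}$). Hence $[X \setminus b, a] = [X, a]$ in $\Theta(\fF)$, as claimed.

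The only conceptual step is observing that separatedness forces Definition~\ref{defn:meas}(c) to degenerate into a single-term equation; everything else is bookkeeping. I do not expect a real obstacle here. The one thing worth double-checking, were one to use the L-data presentation from Proposition~\ref{prop:Theta-present} instead, is that the $\delta$ correction in $L((X \setminus b, a), (X \setminus a, b), \id)$ vanishes; but an extension of $\id$ to an iso $X \setminus b \to X \setminus a$ sending $a \mapsto b$ would let one glue $a$ to $b$ consistently and produce a second amalgamation of cardinality $|X|-1$, contradicting separatedness, so $\delta=0$ and the L-relation gives the same conclusion.
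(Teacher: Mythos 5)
Your proof is correct and follows exactly the same route as the paper: apply the amalgamation axiom to the square with corners $X \setminus \{a,b\}$, $X \setminus a$, $X \setminus b$, $X$, and use uniqueness of the amalgamation to collapse the sum to a single term, identified as $[X,a]$. The concluding aside about the $\delta$-term in the L-data formulation is a nice sanity check but not needed for the argument.
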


\begin{proof}
Consider the diagram
\begin{displaymath}
\xymatrix{
X \setminus b \ar[r] & X \\
X \setminus \{a,b\} \ar[u]^i \ar[r] & X \setminus a \ar[u]_j }
\end{displaymath}
Since $X$ is the unique amalgamation, it follows from Definition~\ref{defn:theta}(c) that $[i]=[j]$ in $\Theta(\fF)$, which is exactly the stated relation.
\end{proof}

Let $(X,a)$ be a marked structure. We say that $b \ne a$ is \defn{extraneous} if it is separated from $a$, and we say that $(X,a)$ is \defn{minimal} if there are no extraneous elements. The proposition shows that the classes of minimal marked structures generate $\Theta(\fF)$. Depending on $\fF$, separated elements may be plentiful or scarce, as the following examples illustrate. In the main example of this paper (trees), there will be many separated elements.

\begin{example}
Let $\fF$ be the Fra\"iss\'e class of finite sets. In this case, there are no separated elements. Indeed, if $a,b \in X$ are distinct then there are two amalgamations of $X \setminus a$ and $X \setminus b$ over $X \setminus \{a,b\}$, namely, $X$ and the quotient of $X$ where $a$ and $b$ are identified.
\end{example}

\begin{example} \label{example:DLO}
Let $\fF$ be the Fra\"iss\'e class of finite totally ordered sets. Let $a,b \in X$ be distinct. One can show that $a$ and $b$ are separated if and only if there is some element $c \in X$ strictly in between $a$ and $b$, i.e., $a<c<b$ or $b<c<a$. We thus find that there are four minimal marked structures:
\begin{displaymath}
\tikz[framed]{
\node[chosen] (B) at (0,0) {};
}
\qquad\qquad
\tikz[framed]{
\node[leaf] (B) at (0,0) {};
\node[chosen] (B) at (.5,0) {};
}
\qquad\qquad
\tikz[framed]{
\node[chosen] (B) at (0,0) {};
\node[leaf] (B) at (.5,0) {};
}
\qquad\qquad
\tikz[framed]{
\node[leaf] (B) at (0,0) {};
\node[chosen] (B) at (.5,0) {};
\node[leaf] (B) at (1,0) {};
}
\end{displaymath}
The above sets are ordered from left to right, and the asterisk is the marked element. We name these classes $O_1, O_{r}, O_{l}$, and $O_{m}$ respectively.
%\Icom{, where \textit{r},\textit{l}, and \textit{m} stand for \textit{right}, \textit{left}, and \textit{middle} respectively. These names have the meaning: the variable for a one-point extension $(X, x)$ where $x$ is a right (left) most point in the totally ordered $X$ of an \textit{arbitrary} cardinality at least 2 is equal to $O_{r}$ ($O_l$); if $x$ lies properly in the middle of $X$, with at least one point of $X$ from both sides of $x$, then the variable is forced to be equal to $O_{m}$.}
We see that $\Theta(\fF)$ is generated by $O_1, O_{r} ,O_{l}, O_{m}$.
\end{example}

%\Icom{Ilia is truly afraid of adding more text... However, we can add here something enticing for model-theorists or for audience ready for some model theory.}
%
%\Icom{Like:}
%\begin{remark}
%We do not try to retranslate the method into the language of model theory. However, let us just demonstrate effective naturality of the idea of separated elements and Proposition~\ref{prop:separated} for total orders.
%
%Consider the marked structure $(X = \{x < y < z\}, x)$. Then to specify $x$ inside $X$, by default, we would consider a set of formulas $x < y$ and $x < z$. However, the former implies the latter as we have a formula $y<z$ in $X \backslash x$ and the transitivity axiom. Therefore the class of this marked structure coincides with the class of $(\{ x<y \},x)$ which is $O_l$ in our notation.
%\end{remark}

\subsection{A refined presentation} \label{ss:refined}

We have just seen that $\Theta(\fF)$ is generated by minimal marked structures. We now find a set of ``minimal'' relations between these classes to obtain a presentation.

We first define an equivalence relation $\sim$ on marked structures. Concisely, $\sim$ is the smallest equivalence relation in which isomorphic marked structures are equivalent, and $(X,a) \sim (X \setminus b, a)$ if $b$ is extraneous. In more detail, we have $(X,a) \sim (X',a')$ if there is a chain
\begin{displaymath}
(X,a)=(X_1,a_1), \ldots, (X_n,a_n)=(X',a')
\end{displaymath}
such that $(X_{i+1},a_{i+1})$ is either isomorphic to $(X_i,a_i)$, or obtained from it by adding or deleting an extraneous element.

Let $P^*$ be the polynomial ring on isomorphism classes of minimal marked structures. We still write $\lbb X,a \rbb$ for the variable corresponding to $(X,a)$. We let $\fb_0^*$ be the ideal of $P^*$ generated by the elements $\lbb X,a \rbb-\lbb X',a'\rbb$ where $(X,a)$ and $(X',a')$ are equivalent minimal marked structures. There is a natural ring homomorphism $\pi \colon P \to P^*/\fb_0^*$ defined by mapping $\lbb X,a \rbb$ to the (class of) $\lbb X', a \rbb$, where $X'$ is obtained from $X$ by deleting extraneous elements, one at a time, until a minimal structure is reached. We note that $(X',a)$ is not necessarily well-defined, but its class is well-defined modulo $\fb_0^*$, which is why $\pi$ is a well-defined map.

We now study the images of the ideals $\fb_1$ and $\fb_2$ under $\pi$. We say that distinct elements $a$ and $b$ of a structure $X$ in $\fF$ are \defn{stably separated} if for every embedding $i \colon X \to X'$ in $\fF$, the images $i(a)$ and $i(b)$ are separated elements of $X'$.

\begin{proposition} \label{prop:Lmin}
Let $(X,X')$ be an L-datum, and let $c$ be an element of $X$ that is stably separated from the marked point of $X$. Then
\begin{displaymath}
\pi(L(X,X')) = \pi(L(X \setminus c, X' \setminus c)).
\end{displaymath}
\end{proposition}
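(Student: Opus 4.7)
The plan is to expand both $L$-elements by definition and match them term by term after applying $\pi$. Writing the L-datum as $((X,a),(X',b),i)$ with $i\colon X\setminus a \to X'\setminus b$ an isomorphism, we identify $c\in X$ with $i(c)\in X'\setminus b$; enumerate by $\{Z_\alpha\}$ the proper amalgamations of $X,X'$ over $X\setminus a$ and by $\{W_\beta\}$ the proper amalgamations of $X\setminus c, X'\setminus c$ over $X\setminus\{a,c\}$, with corresponding flags $\delta$ and $\delta'$. The argument proceeds in two steps: (i) use stably-separated to show $\pi(\lbb X,a\rbb)=\pi(\lbb X\setminus c,a\rbb)$ and $\pi(\lbb Z_\alpha,a\rbb)=\pi(\lbb Z_\alpha\setminus c,a\rbb)$ for each $\alpha$; and (ii) exhibit a bijection $\sigma\colon\{Z_\alpha\}\to\{W_\beta\}$ sending $Z_\alpha$ to $Z_\alpha\setminus c$, together with the equality $\delta=\delta'$.

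For (i), stably-separated specialized to $X\hookrightarrow X$ gives $c,a$ separated in $X$; hence $c$ is extraneous in $(X,a)$, so $(X,a)\sim(X\setminus c,a)$ in the equivalence defining $\fb_0^*$ and the first $\pi$-identity follows. For each $Z_\alpha$, the embedding $j_1\colon X\hookrightarrow Z_\alpha$ furnished by the amalgamation allows us to apply stably-separated again, giving $c,a$ separated in $Z_\alpha$ and hence the second $\pi$-identity. For (ii), the key observation is that each $Z_\alpha$ satisfies $Z_\alpha\setminus a\cong X'$ (via $j_2$), $Z_\alpha\setminus c=\sigma(Z_\alpha)$, and $Z_\alpha\setminus\{a,c\}\cong X'\setminus c$; separation of $a,c$ in $Z_\alpha$ then identifies $Z_\alpha$ as the unique amalgamation in $\fF$ of $X'$ and $\sigma(Z_\alpha)$ over $X'\setminus c$, yielding injectivity. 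For surjectivity, given $W_\beta$ one applies the amalgamation property of $\fF$ to amalgamate $X'$ and $W_\beta$ over $X'\setminus c$ and verifies that the result is a valid amalgamation of $X,X'$ over $X\setminus a$ with $\sigma(Z)=W_\beta$. The equality $\delta=\delta'$ is parallel: an extension $\tilde i\colon X\to X'$ of $i$, if one exists, restricts to an extension $X\setminus c\to X'\setminus c$ of $i|_{X\setminus\{a,c\}}$, and conversely such an extension lifts back using the uniqueness in the separated condition, transported through the iso on $X\setminus c$.

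Combining the two steps gives
\begin{align*}
\pi(L(X,X'))
&= \pi(\lbb X\setminus c,a\rbb) - \delta - \sum_\alpha \pi(\lbb Z_\alpha\setminus c,a\rbb) \\
&= \pi(\lbb X\setminus c,a\rbb) - \delta' - \sum_\beta \pi(\lbb W_\beta,a\rbb) = \pi(L(X\setminus c,X'\setminus c)),
\end{align*}
establishing the claim. The main obstacle is the surjectivity of $\sigma$: while the amalgamation property easily produces $Z$ from $W_\beta$, one must check that $Z$ genuinely carries $X$ as a substructure with its prescribed relations. A priori the relations in $Z$ involving both $a$ (from $W_\beta$) and $c$ (from $X'$) are unconstrained by the amalgamation data, and the stably-separated hypothesis applied inside $Z$ is precisely the mechanism that pins them down to the unique compatible choice, ensuring that the constructed $Z$ is the desired preimage $Z_\alpha$.
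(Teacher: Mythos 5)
Your proof is correct and follows essentially the same structure as the paper's: delete $c$, apply stable separation to match $\pi$-images of the boundary variables, then establish a bijection $Y \mapsto Y\setminus c$ between proper amalgamations by using separation in each amalgamation for injectivity, separation in $X$ together with the Fra\"iss\'e amalgamation property for surjectivity, and the same argument for $\delta=\delta'$. One small imprecision: in the surjectivity step, the separation that ``pins down'' the constructed $Z$ is separation of $a$ and $c$ inside $X$ (forcing the restriction $Z\setminus b$, viewed as an amalgamation of $X\setminus a$ and $X\setminus c$, to be $X$), not a fact ``inside $Z$''; but this does not affect the correctness of the argument.
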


\begin{proof}
Let $a \in X$ and $b \in X'$ be the marked points, and equate $X \setminus a$ with $X' \setminus b$; call this structure $W$. Thus $X=W \cup \{a\}$ and $X'=W \cup \{b\}$. Let $Y_1, \ldots, Y_n$ be the proper amalgamations of $W \cup \{a\}$ and $W \cup \{b\}$, i.e., those in which $a$ and $b$ are distinct. We assume the each $Y_i$ has underlying set $W \cup \{a,b\}$. Let $W'=W \setminus c$ and $Y'=Y_i \setminus c$. We claim that $Y'_1, \ldots, Y'_n$ are exactly the proper amalgamations of $W' \cup \{a\}$ and $W' \cup \{b\}$, i.e., every amalgamation appears on this list, and no two on the list are the same.

First, we show that the $Y'_i$ are distinct. Since $a$ and $c$ are stably separated in $W \cup \{a\}$, they are also separated in $Y_i$. Thus $Y_i$ is the unique amalgamation of $Y_i \setminus c=Y'_i$ and $Y_i \setminus a=W \cup \{b\}$. In other words, there is exactly one structure on $W \cup \{a,b\}$ (namely $Y_i$) that is compatible with the given structure on $W \cup \{b\}$ and with $Y'_i$. Thus if $Y'_i=Y'_j$ then we would have $Y_i=Y_j$, and thus $i=j$.

We now show that every proper amalgamation appears. Thus suppose that $Z'$ is some amalgamation of $W' \cup \{a\}$ and $W' \cup \{b\}$; we think of $Z'$ as a structure on the set $W' \cup \{a,b\}$. Consider the amalgamation diagram
\begin{displaymath}
\xymatrix{
Z' \ar[r] & ? \\
W' \cup \{b\} \ar[u] \ar[r] & W \cup \{b\} \ar[u] }
\end{displaymath}
Since $\fF$ is a Fra\"iss\'e class, there is some amalgamation $Z$. We regard $Z$ as structure on $W \cup \{a,b\}$ that restricts to the given structure on $W \cup \{b\}$, and to $Z'$ on $W' \cup \{a,b\}$. Consider the restriction of $Z$ to $W \cup \{a\}$. This restricts to the given structure on $W$ (since $Z$ already restricts to the given structure on $W \cup \{b\}$), and it restricts to the given structure on $W' \cup \{a\}$ (since $Z$ already restricts to the given structure on $W' \cup \{a,b\}$). Thus $Z \setminus b$ is an amalgamation of $W$ and $W' \cup \{a\}$. Since $a$ and $c$ are separated, $W \cup \{a\}$ is the unique such amalgamation, and so $Z$ restricts to the given structure on $W \cup \{a\}$. Thus $Z$ is in fact an amalgamation of $W \cup \{a\}$ and $W \cup \{b\}$, and therefore one of the $Y_i$'s, and so $Z'=Y'_i$, as required.

We have
\begin{displaymath}
L(W \cup \{a\}, W \cup \{b\}) = \lbb W \cup \{a\}, a \rbb - \delta - \sum_{i=1}^n \lbb Y_i, a \rbb
\end{displaymath}
while
\begin{displaymath}
L(W' \cup \{a\}, W' \cup \{b\}) = \lbb W' \cup \{a\}, a \rbb - \delta -  \sum_{i=1}^n \lbb Y'_i, a \rbb.
\end{displaymath}
Note that the two $\delta$'s above are the same. Since $a$ and $c$ are stably separated in $W \cup \{a\}$, they are also separated in each $Y_i$, and so we have
\begin{displaymath}
\pi(\lbb W \cup \{a\}, a \rbb) = \pi(\lbb W' \cup \{a\}, a \rbb), \qquad
\pi(\lbb Y_i, a \rbb) = \pi(\lbb Y'_i, a \rbb),
\end{displaymath}
and so the two linear forms have the same image under $\pi$.
\end{proof}

\begin{definition}
We make the following definitions:
\begin{enumerate}
\item An L-datum $(X,X')$ is \defn{separated} if there is a unique amalgamation of $X$ and $X'$, and this amalgamation is proper (the marked points remain distinct).
\item An L-datum $(X,X')$ is \defn{minimal} if it is not separated and there is no element of $X$ that is stably separated from the marked point.
\item We define $\fb_1^*$ to be the ideal of $P^*/\fb_0^*$ generated by the elements $\pi(L(X,X'))$ where $(X,X')$ is a minimal L-datum. \qedhere
\end{enumerate}
\end{definition}

\begin{proposition} \label{prop:Qmin}
Let $(X,a,b)$ be a Q-datum.
\begin{enumerate}
\item If $a$ and $b$ are separated then $\pi(Q(X,a,b))=0$.
\item Suppose there exists $c \in X$ distinct from $a$ and $b$ such that $c$ is extraneous in $(X,a)$, $(X,b)$, $(X \setminus a, b)$, and $(X \setminus b, a)$. Then $\pi(Q(X,a,b))=\pi(Q(X \setminus c, a, b))$.
\end{enumerate}
\end{proposition}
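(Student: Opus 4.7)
The plan is to derive both parts from the construction of $\pi$ together with Proposition~\ref{prop:separated}. By that proposition and the definition of $\pi$, whenever $c$ is extraneous in $(X,a)$ (i.e., $a$ and $c$ are separated in $X$) one has $\pi(\lbb X,a \rbb)=\pi(\lbb X\setminus c, a \rbb)$ in $P^*/\fb_0^*$, since the classes of $(X,a)$ and $(X\setminus c,a)$ represent the same minimal marked structure up to the equivalence $\sim$. Because $Q(X,a,b)$ is a difference of products of the four classes $\lbb X,a \rbb$, $\lbb X\setminus a,b \rbb$, $\lbb X,b \rbb$, $\lbb X\setminus b,a \rbb$, both statements reduce to applying this ``deletion identity'' to each factor appropriately.

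For part~(a), the hypothesis that $a$ and $b$ are separated in $X$ is symmetric: it says simultaneously that $b$ is extraneous in $(X,a)$ and that $a$ is extraneous in $(X,b)$. The deletion identity therefore gives
$\pi(\lbb X,a\rbb) = \pi(\lbb X\setminus b, a\rbb)$ and $\pi(\lbb X,b\rbb)=\pi(\lbb X\setminus a, b\rbb)$. Substituting these into the definition of $Q(X,a,b)$ makes the two products coincide, so $\pi(Q(X,a,b))=0$.

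For part~(b), the four hypotheses are custom-tailored to the four factors of $Q(X,a,b)$: extraneity of $c$ in $(X,a)$, $(X,b)$, $(X\setminus a,b)$, $(X\setminus b,a)$ respectively allows the replacements $\lbb X,a\rbb\to\lbb X\setminus c,a\rbb$, $\lbb X,b\rbb\to\lbb X\setminus c,b\rbb$, $\lbb X\setminus a,b\rbb\to\lbb(X\setminus c)\setminus a,b\rbb$, and $\lbb X\setminus b,a\rbb\to\lbb(X\setminus c)\setminus b,a\rbb$ under $\pi$. Performing all four substitutions converts $\pi(Q(X,a,b))$ into exactly $\pi(Q(X\setminus c,a,b))$. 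There is no substantial obstacle in the argument—it is essentially bookkeeping built on Proposition~\ref{prop:separated}—and the only thing worth emphasizing is why all four extraneity hypotheses are listed: once $a$ or $b$ has been removed, the separation of $c$ from the remaining marked point no longer follows formally from the corresponding separation in $X$, so it must be hypothesized directly.
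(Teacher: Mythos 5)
Your proposal is correct and matches the paper's argument: part~(a) is the same two substitutions via the deletion identity, and part~(b) is the same term-by-term replacement of the four variables. The closing remark on why all four extraneity hypotheses are needed (separation is not preserved when passing to the substructures $X\setminus a$ and $X\setminus b$) is a nice clarification that the paper leaves implicit.
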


\begin{proof}
(a) Since $a$ and $b$ are separated, we have
\begin{displaymath}
\pi(\lbb X, a \rbb)=\pi(\lbb X \setminus b, a \rbb), \qquad
\pi(\lbb X, b \rbb)=\pi(\lbb X \setminus a, b \rbb),
\end{displaymath}
from which the claim immediately follows.

(b) This follows because the the four variables appearing in $Q(X, a, b)$ and the corresponding variables in $Q(X \setminus c, a, b)$ have the same images under $\pi$, e.g., $\lbb X, a \rbb$ and $\lbb X \setminus c, a \rbb$ have the same image.
\end{proof}

\begin{definition} \label{defn:Qmin}
We make the following definitions:
\begin{enumerate}
\item A Q-datum $(X,a,b)$ is \defn{minimal} if (i) it is not symmetric; (ii) $a$ and $b$ are not separated in $X$; and (iii) there is no $c \in X \setminus \{a,b,\}$ such that $c$ is extraneous in $(X,a)$, $(X,b)$, $(X \setminus a, b)$, and $(X \setminus b, a)$.
\item We let $\fb_2^*$ be the ideal of $P^*/\fb_0^*$ generated by the elements $\pi(Q(X,a,b))$ where $(X,a,b)$ is a minimal Q-datum. \qedhere
\end{enumerate}
\end{definition}

We let $\fb^*=\pi^{-1}(\fb_1^*)+\pi^{-1}(\fb_2^*)$, which is an ideal of $P^*$. Recall the homomorphism $\phi \colon P \to \Theta(\fF)$ from \S \ref{ss:Theta-present}. We let $\phi^* \colon P^* \to \Theta(\fF)$ be the restriction of $\phi$ to $P^* \subset P$. The following is the main result we are after. We emphasize that this result is due to the second author, and will appear in his thesis \cite{Nekrasov}.

\begin{theorem} \label{thm:refined}
The map $\phi^*$ induces an isomorphism $P^*/\fb^* \to \Theta(\fF)$.
\end{theorem}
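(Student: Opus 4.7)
The plan is to produce mutually inverse ring maps between $P^*/\fb^*$ and $\Theta(\fF)$. First, since $\phi^*$ is simply the restriction to $P^* \subset P$ of the map $\phi \colon P \to \Theta(\fF)$ of Proposition~\ref{prop:Theta-present}, I would verify that $\phi^*$ descends to $P^*/\fb^*$. Proposition~\ref{prop:separated} implies that $\phi$ factors through $\pi$, giving $\bar\phi \colon P^*/\fb_0^* \to \Theta(\fF)$, so the $\fb_0^*$-portion of $\fb^*$ already lies in the kernel. The generators of $\fb_1^*$ are $\pi(L(X,X'))$ for minimal L-data, and since $L(X,X') \in \fb_1 \subset \ker(\phi)$ we obtain $\bar\phi(\pi(L(X,X'))) = 0$; the analogous check handles $\fb_2^*$. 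Thus $\phi^*$ descends to $\bar\phi^* \colon P^*/\fb^* \to \Theta(\fF)$. Surjectivity of $\bar\phi^*$ is immediate from iterating Proposition~\ref{prop:separated}: every generator $[X,a]$ of $\Theta(\fF)$ equals $[X_0, a]$ for some minimal marked structure $(X_0, a)$ obtained by peeling off extraneous elements.

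The substance of the proof is to construct the inverse $\psi \colon \Theta(\fF) \to P^*/\fb^*$. My approach is to show that the composition $P \xrightarrow{\pi} P^*/\fb_0^* \twoheadrightarrow P^*/\fb^*$ annihilates both $\fb_1$ and $\fb_2$, so it descends through $P/\fb \cong \Theta(\fF)$. Fix an L-datum $(X, X')$. Iterating Proposition~\ref{prop:Lmin}, I would remove stably separated elements from $X$ (together with their partners in $X'$) one at a time; each such removal preserves the value of $\pi(L(\cdot,\cdot))$ and strictly decreases $|X|$, so the process terminates in some L-datum $(X_0, X_0')$ with no stably separated elements. Either $(X_0, X_0')$ is separated, in which case the unique amalgamation $Y_1$ has $a$ and $b$ separated within it, making $b$ extraneous to $(Y_1,a)$; then $\pi(\lbb Y_1, a\rbb) = \pi(\lbb X_0, a\rbb)$ and hence $\pi(L(X_0, X_0')) = 0$. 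Or $(X_0, X_0')$ is not separated, in which case it is minimal by construction and $\pi(L(X_0, X_0')) \in \fb_1^*$ by definition. In either case, $\pi(L(X,X'))$ has trivial image in $P^*/\fb^*$. The analogous reduction for Q-data uses Proposition~\ref{prop:Qmin}: one strips elements that are simultaneously extraneous in all four of the marked substructures appearing in $Q(X,a,b)$, reaching either a symmetric or separated Q-datum (giving zero) or a minimal Q-datum (giving an element of $\fb_2^*$).

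Finally, verifying that $\psi$ and $\bar\phi^*$ are mutually inverse is a routine check on generators: for any $(X,a)$, $\bar\phi^*(\psi([X,a])) = \bar\phi^*(\lbb X_0, a \rbb) = [X_0, a] = [X, a]$ by Proposition~\ref{prop:separated}; and for a minimal $(X,a)$, $\psi(\bar\phi^*(\lbb X, a\rbb)) = \psi([X,a]) = \lbb X, a\rbb$ since $X$ is already its own minimal form. The main obstacle I anticipate is the iterative reduction in the previous paragraph — specifically, checking that the terminal state of the reduction really fits the precise notions of ``minimal L-datum'' and ``minimal Q-datum'' appearing in the definitions of $\fb_1^*$ and $\fb_2^*$, so that nothing beyond the relations encoded in these ideals is needed. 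This hinges on the exact formulations of Propositions~\ref{prop:Lmin} and~\ref{prop:Qmin} and on checking that the notions of stably separated and of simultaneously extraneous elements behave well under deletion of other such elements.
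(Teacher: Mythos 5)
Your proof is correct and follows essentially the same strategy as the paper: the heart of the argument in both cases is the iterative reduction of an arbitrary L-datum (resp.\ Q-datum) via Proposition~\ref{prop:Lmin} (resp.\ Proposition~\ref{prop:Qmin}) to one that is either degenerate (separated, or symmetric/separated) or minimal, thereby showing $\pi(\fb_1) \subset \fb_1^*$ and $\pi(\fb_2) \subset \fb_2^*$. The only cosmetic difference is that you package the conclusion as a pair of explicitly mutually inverse ring maps, whereas the paper first establishes the equalities $\pi(\fb_1)=\fb_1^*$ and $\pi(\fb_2)=\fb_2^*$ and then finishes with a commutative-triangle argument; these are interchangeable framings of the same deduction. (Incidentally, you correctly identified the terminal cases of the Q-datum reduction as ``symmetric or separated,'' fixing what appears to be a typo in the paper's proof, which says ``symmetric or $a$ and $b$ are not separated.'')
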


\begin{proof}
We first claim that $\pi(\fb_1)=\fb_1^*$. Let $(X,X')$ be an L-datum. If there is an element $c \in X$ that is stably separated from the marked point, replace $(X,X')$ with $(X \setminus c, X' \setminus c)$. Iterate this process until we end up with $(Y,Y')$ in which there is no such point $c$. By Proposition~\ref{prop:Lmin}, $L(X,X')$ and $L(Y,Y')$ have the same image under $\pi$. Now, if $(Y,Y')$ is separated then $L(Y,Y')$ has the form $\lbb Y \rbb - \lbb Z \rbb$ where $Z$ is equivalent to $Y$, and so $L(Y,Y')$ belongs to $\fb_0^*$ and thus maps to~0 under $\pi$. Otherwise, $(Y,Y')$ is a minimal L-datum. We thus see that $L(X,X')$ either maps to~0 under $\pi$, or has the same image as $L(Y,Y')$ for some minimal L-datum $(Y,Y')$. Thus $\pi(\fb_1) \subset \fb_1^*$. The reverse containment is clear (every minimal L-datum produces an element of $\fb_1$).

We next claim that $\pi(\fb_2)=\fb_2^*$. Let $(X,a,b)$ be a Q-datum. If there is $c$ as in Definition~\ref{defn:Qmin}(a)(iii) then replace $(X,a,b)$ with $(X \setminus c, a, b)$. Iterate this process until we end up with $(Y,a,b)$ in which there is no such $c$. By Proposition~\ref{prop:Qmin}, $Q(X,a,b)$ and $Q(Y,a,b)$ have the same image under $\pi$. Now, if $(Y,a,b)$ is symmetric or $a$ and $b$ are not separated in $Y$ then $Q(Y,a,b)$ maps to~0 under $\pi$. Otherwise, $(Y,a,b)$ is minimal. We thus see that $Q(X,a,b)$ either maps to~0 under $\pi$, or has the same image as $Q(Y,a,b)$ for some minimal Q-datum $(Y,a,b)$. Thus $\pi(\fb_2) \subset \fb_2^*$. The reverse containment is clear.

From the above two paragraphs, we see that $\pi$ induces a map $P/\fb \to P^*/\fb^*$, which is clearly surjective. Recall (Proposition~\ref{prop:Theta-present}) that $\phi \colon P \to \Theta(\fF)$ induces an isomorphism $P/\fb \to \Theta(\fF)$. It is clear that $\phi^*$ induces a homomorphism $P^*/\fb^* \to \Theta(\fF)$, which is surjective since $\Theta(\fF)$ is generated by minimal marked structures. One easily sees that the following diagram commutes
\begin{displaymath}
\xymatrix@C=6ex{
P/\fb \ar[d]_{\pi} \ar[rd]^{\phi} \\
P^*/\fb^* \ar[r]^{\phi^*} & \Theta(\fF) }
\end{displaymath}
It follows that the bottom map here is injective, and thus an isomorphism. The result follows.
\end{proof}

\begin{example} \label{ex:ordered}
Let $\fF$ be the Fr\"aiss\'e class of finite totally ordered sets. From the description of separated in vertices in Example~\ref{example:DLO}, it is clear that that ``separated'' and ``stably separated'' coincide. Therefore if the L-datum $(X,X')$ is minimal then cardinality of $X$ (and $X'$) is at most two and $X$ is isomorphic to $X'$. So $\fb_1^*$ is generated by four equations
\begin{align*}
    L(O_1)&: \, O_1 = 1 + O_r + O_l\\
    L(O_r)&: \, O_r = 1 + O_r + O_m\\
    L(O_l)&: \, O_l = 1 + O_l + O_m\\
    L(O_m)&: \, O_m = 1 + 2 O_m
\end{align*}
After simplifications, we see that
\begin{displaymath}
\fb_1^* = \langle 1 - O_1 + O_r + O_l, O_m +1 \rangle.
\end{displaymath}
Analogously, if Q-datum $(X,a,b)$ is minimal then $a$ and $b$ are consecutive elements in $X$ and cardinality of $X$ is at most 3. So minimal Q-data are 
\begin{displaymath}
\qquad\qquad
\tikz[framed]{
\node[chosen] (B) at (0,0) {};
\node[chosen] (B) at (.5,0) {};
}
\qquad\qquad
\tikz[framed]{
\node[chosen] (B) at (0,0) {};
\node[chosen] (B) at (.5,0) {};
\node[leaf] (B) at (1,0) {};
}
\qquad\qquad
\tikz[framed]{
\node[leaf] (B) at (0,0) {};
\node[chosen] (B) at (.5,0) {};
\node[chosen] (B) at (1,0) {};
}
\end{displaymath}
Note that the first Q-datum here is \emph{not} symmetric. Therefore
\begin{displaymath}
\fb_2^* = \langle O_1 (O_r - O_l), O_r (O_r - O_m), O_l (O_l - O_m) \rangle.
\end{displaymath}
We thus find $\Theta(\fF) \cong \bZ^4$ as a ring. See \cite[\S 17]{repst} for a different derivation, and some further details.
\end{example}

\subsection{The induced regular measure} \label{ss:induced}

To close this section, we explain how every field valued measure induces a regular measure on some Fra\"iss\'e subclass. We will use this observation in \S \ref{ss:finite-meas} to construct measures on trees of finite level. Let $\fF$ be a Fra\"iss\'e class, and let $\mu$ be a measure for $\fF$ valued in a domain $k$. Let $\fF'$ be the subclass of $\fF$ consisting of structures $X$ for which $\mu(X)$ is non-zero.

\begin{proposition} \label{prop:induced}
The class $\fF'$ is Fra\"iss\'e, and $\mu$ restricts to a measure $\mu'$ on $\fF'$ that is nowhere zero. If $k$ is a field then $\mu'$ is regular.
\end{proposition}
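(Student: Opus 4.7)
The plan is to verify, in order, the three Fraïssé axioms for $\fF'$, then check that the restriction of $\mu$ satisfies the measure axioms, and finally use the domain property once more to see that the restricted measure is nowhere zero (hence regular when $k$ is a field). The only tool needed throughout is the identity $\mu(Y)=\mu(X)\cdot\mu(i)$ for an embedding $i\colon X\to Y$, together with the fact that $k$ has no zero divisors.

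First I would handle the hereditary property. If $X\in\fF'$ and $Y$ embeds into $X$ via $i$, then factoring $\emptyset\to Y\to X$ gives $\mu(X)=\mu(Y)\cdot\mu(i)$, so $\mu(X)\ne 0$ forces $\mu(Y)\ne 0$. Finiteness of isomorphism classes at each size is inherited from $\fF$, so it remains to show amalgamation. Given embeddings $f\colon X\to Y_1$ and $g\colon X\to Y_2$ with all three objects in $\fF'$, apply Definition~\ref{defn:meas}(c) in $\fF$ to obtain $\mu(f)=\sum_{\alpha=1}^n \mu(i'_\alpha)$, where $i'_\alpha\colon Y_2\to Z_\alpha$ runs over all amalgamations in $\fF$. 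From $\mu(Y_1)=\mu(X)\mu(f)\ne 0$ and the domain hypothesis, $\mu(f)\ne 0$; hence some $\mu(i'_\alpha)\ne 0$, and then $\mu(Z_\alpha)=\mu(Y_2)\mu(i'_\alpha)\ne 0$, so $Z_\alpha\in\fF'$. This produces an amalgamation in $\fF'$.

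Next I would verify that $\mu'\coloneqq \mu|_{\fF'}$ is a measure on $\fF'$. Axioms (a) and (b) of Definition~\ref{defn:meas} are immediate since they hold for $\mu$. For axiom (c), given $i\colon X\to Y$ and $X\to X'$ in $\fF'$, the amalgamations in $\fF'$ are exactly those $i'_\alpha\colon X'\to Y'_\alpha$ from $\fF$ for which $Y'_\alpha\in\fF'$. For any $Y'_\alpha\notin\fF'$ we have $0=\mu(Y'_\alpha)=\mu(X')\mu(i'_\alpha)$ with $\mu(X')\ne 0$, so $\mu(i'_\alpha)=0$ and such terms drop out of the sum without changing its value. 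Thus $\mu'(i)=\sum_{\alpha\colon Y'_\alpha\in\fF'}\mu'(i'_\alpha)$, as required.

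Finally, for an arbitrary embedding $i\colon X\to Y$ in $\fF'$, the factorization $\mu(Y)=\mu(X)\mu(i)$ with both $\mu(X),\mu(Y)\ne 0$ forces $\mu'(i)=\mu(i)\ne 0$. When $k$ is a field, every nonzero element is a unit, so $\mu'$ is regular. There is no real obstacle here; the only thing to watch is the bookkeeping in axiom (c) to confirm that the ``lost'' amalgamations contribute zero, which is exactly where the domain hypothesis is used.
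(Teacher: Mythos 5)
Your proof is correct and follows essentially the same path as the paper's: the hereditary axiom via $\mu(X)=\mu(Y)\cdot\mu(i)$ and the domain hypothesis, amalgamation by observing that $\mu(f)\neq 0$ forces some summand $\mu(i'_\alpha)\neq 0$, axiom (c) for $\mu'$ by noting the omitted terms vanish, and regularity from the same factorization identity.
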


\begin{proof}
It is clear that $\fF'$ has only finitely many isomorphism classes of $n$-element structures. Suppose $X \in \fF'$ and $Y$ embeds into $X$. We have
\begin{displaymath}
\mu(X)=\mu(Y) \cdot \mu(Y \to X).
\end{displaymath}
Since $\mu(X)$ is non-zero and $k$ is a domain, it follows that $\mu(Y) \ne 0$. Thus $Y$ belongs to $\fF'$, which shows that $\fF$ is hereditary. Finally, suppose that we have embeddings $X \to Y$ and $X \to X'$ in $\fF'$, and let $Y'_1, \ldots, Y'_n$ be the various amalgamations in $\fF$. Then
\begin{displaymath}
\mu(X \to Y) = \sum_{i=1}^n \mu(X' \to Y'_i).
\end{displaymath}
Since $X$ and $Y$ belong to $\fF$, it follows that the left side is non-zero. Thus there is some $i$ for which $\mu(X' \to Y'_i)$ is non-zero. Since $X'$ also belongs to $\fF$, we see that $\mu(Y'_i)$ is non-zero, and so $Y'_i \in \fF'$. Thus $\fF'$ satisfies the amalgamation property.

Let $\mu'$ be the restriction of $\mu$ to $\fF'$. It is clear that $\mu'$ satisfies conditions (a) and (b) of Definition~\ref{defn:meas}. We now verify condition (c). Let $X \to Y$ and $X \to X'$ be embeddings in $\fF'$, and let $Y'_1, \ldots, Y'_n$ be the various amalgamations in $\fF$, labeled so that $Y'_1, \ldots, Y'_m$ belong to $\fF'$ and $Y'_{m+1}, \dots, Y'_n$ do not belong to $\fF'$. Since $\mu$ is a measure, we have
\begin{displaymath}
\mu(X \to Y) = \sum_{i=1}^n \mu(X' \to Y'_i).
\end{displaymath}
For $m<i\le n$, we have $\mu(X' \to Y'_i)=0$, since $\mu(X') \ne 0$ and $\mu(Y'_i)=0$. Thus these terms can be omitted from the sum. This verifies (c) for $\mu'$.

If $i \colon X \to Y$ is an embedding in $\fF'$ then we have $\mu(X) \cdot \mu(i) = \mu(Y)$. Since $k$ is a domain and $\mu(Y)$ is non-zero, it follows that $\mu(i)$ is non-zero. Thus $\mu$ takes only non-zero values. In particular, if $k$ is a field then $\mu$ is regular.
\end{proof}

%\begin{remark}
%The proposition shows importance of classification of all measures on a class $\fF$ from a model-theoretic side: non-regular measures classify $\omega$-categorical substructures inside some fixed $\omega$-categorical structure. 
%\end{remark}

\section{Measures on trees} \label{s:trees}

\subsection{Overview}

In this section, we study measures on trees. These measures are the crucial ingredient in our construction of arboreal tensor categories. We briefly outline the contents of this section. In \S \ref{ss:trees} we define the Fra\"iss\'e classes $\fT$ and $\fT_n$ of trees that we will use. In \S \ref{ss:treesep} we characterize separated vertices in trees, and in \S \ref{ss:mintree} we classify the minimal marked trees; this gives us a generating set for $\Theta(\fT)$. In \S \ref{ss:theta-tree} we determine the relations amongst these generators using the method of Theorem~\ref{thm:refined}. This presentation shows that there are two 1-parameter families of measures on $\fT$. We will focus on one of these families in the remainder of the paper, since we are unable to say much about the other one. In \S \ref{ss:goodmeas}, we define this family of measures (denoted $\mu_t$) and compute it explicitly. In \S \ref{ss:finite-meas} we show how $\mu_n$ (for $n \ge 3$ an integer) induces a measure on $\fT_n$, and in \S \ref{ss:inf-meas} we look at $\mu_t$ as $t \to \infty$.

\subsection{Trees} \label{ss:trees}

A \defn{tree} is a connected simple graph without cycles; a simple graph is an undirected graph without loops or parallel edges, having finitely many vertices. Let $T$ be a tree. We say that a vertex of $T$ is a \defn{leaf} if it has valence one, and a \defn{node} otherwise. For a leaf $x$, we write $N(x)$ for the unique node $x$ is connected to, assuming $T$ has at least three leaves (which ensures this node exists). We say that $T$ is \defn{reduced} if it has no nodes of valence two. Any tree has a \defn{reduction}, obtained by deleting each node of valence two (and joining the two neighbors by an edge).

Let $T$ be a tree. We define a quaternary relation $R$ on the set $L(T)$ of leaves of $T$ as follows. Let $x_1$, $x_2$, $y_1$, and $y_2$ be leaves. Then $R(x_1,x_2;y_1,y_2)$ is true if and only if the four leaves are distinct, and the shortest path joining $x_1$ and $x_2$ has a common edge with the shortest path joining $y_1$ and $y_2$. In this way, $L(T)$ is a finite relations structure, having one relation of arity four. If $T'$ is a second tree then the structures $L(T)$ and $L(T')$ are isomorphic if and only if the reductions of $T$ and $T'$ are isomorphic \cite[Proposition~3.1]{CameronTrees}.

We define a \defn{tree structure} to be a finite set with a quaternary relation that is isomorphic to $L(T)$ for some tree $T$. We let $\fT$ be the class of tree structures. This is a Fra\"iss\'e class \cite[Proposition~3.2]{CameronTrees}. We define the \defn{level} of a tree $T$ (or the corresponding structure) to be the maximum valence of a node in $T$. Fix an integer $n \ge 3$. We let $\fT_n$ be the class of tree structures of level $\le n$. This is also a Fra\"iss\'e class; see the ``First variation'' after \cite[Proposition~3.2]{CameronTrees}.

For additional background on these Fra\"iss\'e classes, see \cite{CameronTrees}. The case $\fT_3$ is also discussed in \cite[\S 2.6]{CameronBook}, where the relevant trees are called ``boron trees.''

\begin{remark}
The Fra\"iss\'e limits of tree structures (see \S \ref{ss:oligo}), of either bounded or unbounded valance, are interesting geometric objects in their own right.  For any two vertices in the ``tree" there is a unique path between them,  but the tree branches along the path at a dense set of points. Any finite collection of vertices will span a genuine finite tree, but there is no notion of adjacency for vertices in the full tree.  We do not explicitly use this geometric description, but it motivates some of the calculations in \S \ref{s:example}.
\end{remark}

\subsection{Separated vertices} \label{ss:treesep}

Recall that two leaves $a$ and $b$ in a tree $T$ are \defn{separated} if $T$ is the unique amalgamation of $T \setminus a$ and $T \setminus b$ over $T \setminus \{a,b\}$. The following proposition gives a concrete characterization of such elements.

\begin{proposition} \label{prop:treesep}
Let $T$ be a reduced tree and let $a$ and $b$ be distinct leaves of $T$. Then $a$ and $b$ are separated if and only if one of the following two conditions hold:
\begin{enumerate}
\item $N(a)$ and $N(b)$ are distinct and not neighbors.
\item $N(a)$ and $N(b)$ are distinct, and at least one has valence $\ge 4$.
\end{enumerate}
\end{proposition}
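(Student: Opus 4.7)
The plan is to analyze amalgamations of $T \setminus a$ and $T \setminus b$ over $T_0 := T \setminus \{a, b\}$ directly, via a case split on the relative position of $u := N(a)$ and $w := N(b)$: (I) $u = w$; (II) $u \ne w$ and adjacent; (III) $u \ne w$ and non-adjacent. Condition (a) of the proposition is precisely Case III, and condition (b) covers all of Case III together with the portion of Case II where at least one of $u, w$ has valence $\geq 4$. So (a) and (b) both fail exactly when we are in Case I, or in Case II with both of $u, w$ of valence $3$.

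For the ``only if'' direction, I exhibit a second amalgamation in the exceptional situation. If $u = w$, the pair $(a, b)$ plays a symmetric role at the common node, so the swap $a \leftrightarrow b$ together with the identity on $L(T_0)$ is an isomorphism $T \setminus a \to T \setminus b$; the resulting \emph{improper} amalgamation (in which $a$ and $b$ are identified) is distinct from $T$. If $u, w$ are adjacent with both of valence $3$, let $p, q$ denote the third neighbors of $u, w$ respectively, and define $Z$ to be the tree obtained from $T$ by contracting the edge $uw$ to a single $4$-valent node incident to $a, b, p,$ and $q$; one checks directly that $Z \setminus a \cong T \setminus a$ and $Z \setminus b \cong T \setminus b$, so $Z$ is a proper amalgamation distinct from $T$.

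For the ``if'' direction, suppose (a) or (b) holds, and let $Z$ be any amalgamation. First, $Z$ cannot be improper: an improper amalgamation would require an isomorphism $T \setminus a \to T \setminus b$ that sends $b$ to $a$ and fixes $L(T_0)$ pointwise, which I rule out by exhibiting leaves $x, y, z \in L(T_0)$ such that the quartets $\{b, x, y, z\}$ in $T \setminus a$ and $\{a, x, y, z\}$ in $T \setminus b$ have different topologies (in Case (a), take $x$ in the subtree off $u$ and $y$ in the subtree off $w$; in Case (b) with $u$ of valence $\geq 4$, take $x, y$ among $u$'s non-path neighbors). Second, $Z$ must be isomorphic to $T$ as a proper amalgamation. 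The structure on $L(T_0)$ is already fixed, and the attachment of $a$ and $b$ is forced: in Case (b) with $u$ of valence $\geq 4$, the node $u$ persists in $T \setminus a$ with its adjacency to $w$'s region and to the $p_i$'s, pinning down $b$'s location in $Z$, and a symmetric argument using $T \setminus b$ pins down $a$; in Case (a), each intermediate node on the $a$-to-$b$ path in $T$ carries off-path subtree structure and thus persists in both $T \setminus a$ and $T \setminus b$, forcing the $a$-to-$b$ path of $Z$ to traverse the same sequence of nodes.

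The main obstacle is rigorizing the uniqueness step in the ``if'' direction, specifically tracking how the attachment of the missing leaf in $T \setminus a$ or $T \setminus b$ is forced by the remaining data together with the reduced-tree constraint. A clean formalization proceeds via quartets: a reduced tree structure is determined by its quartets, and the only quartets of $Z$ not directly supplied by $T \setminus a$ or $T \setminus b$ are those of the form $\{a, b, x, y\}$ with $x, y \in L(T_0)$, whose topologies can be pinned down by $5$-leaf consistency with the quartets $\{a, x, y, z\}$ and $\{b, x, y, z\}$ that are supplied.
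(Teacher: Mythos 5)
Your high-level plan matches the paper's: both directions reduce to exhibiting a second amalgamation (``only if'') and to small-leaf-set quartet arguments (``if''). The case analysis is correct, and your ruling-out of improper amalgamations via quartet witnesses is sound. For the ``only if'' direction your treatment of Case II differs from the paper's in an interesting way: you exhibit a \emph{proper} alternative amalgamation (contract the edge $N(a)N(b)$ to a $4$-valent node), whereas the paper exhibits the \emph{improper} one in which $a$ and $b$ are identified. Both are valid; yours has the small advantage of making the failure of uniqueness visible even among proper amalgamations.

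The gap is in the uniqueness step of the ``if'' direction. Your ``$5$-leaf consistency'' claim --- that each quartet $\{a,b,x,y\}$ is pinned down by the two supplied quartets inside some $\{a,b,x,y,z\}$ --- is not correct as stated. The problem is that restricting to a $5$-leaf subset need not preserve the separatedness of $a$ and $b$, so the $5$-leaf subtree can fail to be determined. Concretely: let $T$ have nodes $u$--$w$--$v$ with $u$ a $4$-valent node carrying leaves $a,c_1,c_2$, $w$ a $3$-valent node carrying leaf $b$, and $v$ a $3$-valent node carrying leaves $y,z$. Here condition (b) holds, so $a,b$ are separated in $T$. But in $T\vert_{\{a,b,y,c_1,z\}}$ the node $u$ drops to valence $3$ and $N(a),N(b)$ become adjacent $3$-valent nodes, so $a,b$ are \emph{not} separated in that restriction; the $5$-leaf tree has several amalgamations, and they disagree on the quartet $\{a,b,y,c_1\}$. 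So choosing $z$ as the fifth leaf fails; one must choose $c_2$ instead. In general the fifth leaf has to be selected to re-witness separatedness, and with only one extra leaf this may not be possible for every pair $\{x,y\}$. The paper handles this by fixing three witnesses $c_1,c_2,c_3$ in advance (chosen once so that the restriction to $\{a,b,c_1,c_2,c_3\}$ already displays the separating configuration) and then checking a $\le 7$-leaf statement for each $\{x,y\}$; some such device is needed to make your sketch rigorous.
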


\begin{proof}
We explain how to show that (a) implies $a$ and $b$ are separated; the idea for (b) is similar. Let $T'$ be an amalgamation of $T \setminus a$ and $T \setminus b$ over $T \setminus \{a,b\}$. It is easy to see using condition (a) that $a$ and $b$ remain distinct in $T'$. Thus we may as well equate the leaf sets $L(T)$ and $L(T')$; call this common set $L$. The two tree structures amount to two quaternary relations $R$ and $R'$ on $L$, and we must show $R=R'$. We are given that $R$ and $R'$ agree on all 4-tuples in which $a$ and $b$ do not both appear.

It is easy to find $c_1$, $c_2$, and $c_3$ in $L \setminus \{a,b\}$ witnessing (a) in both structures; that is, if we look at the two reduced trees on $\{a,b,c_1,c_2,c_3\}$ then $N(a)$ and $N(b)$ are distinct and not neighbors in each one. Now, let $x,y \in L$ be given. Suppose we want to verify
\begin{displaymath}
R(a,b;x,y) \iff R'(a,b;x,y).
\end{displaymath}
We can verify this by considering the restrictions of $R$ and $R'$ to $\{a,b,c_1,c_2,c_3,x,y\}$. Similarly for other instances of $R$ and $R'$. This shows that it suffices to prove (a) for trees with $\le 7$ leaves, and this is straightforward.

We now handle the opposite direction: supposing (a) and (b) both fail, we show that $a$ and $b$ are not separated. First suppose that $N(a)=N(b)$. Then there is an isomorphism $i \colon T\setminus a \to T\setminus b$ that is the identity on $T \setminus \{a,b\}$, and this realizes $T \setminus a$ as an amalgamation of $T \setminus a$ and $T \setminus b$ over $T \setminus \{a,b\}$. Hence $T$ is not the unique such amalgamation, and so $a$ and $b$ are not separated.

Now suppose $N(a) \ne N(b)$. Since (a) and (b) both fail, it follows that $N(a)$ and $N(b)$ are neighbors, and each has valence $\le 3$ (so $ = 3$ as $T$ is reduced). The picture is
\begin{displaymath}
\tikz{
\node[boron] (A) at (0,0) {};
\node[boron] (B) at (.5,0) {};
\node[leaf,label={\tiny $a$}] (a) at (0,.5) {};
\node[leaf,label={\tiny $b$}] (b) at (.5,.5) {};
\node[draw] (x) at (-.75,0) {\tiny $X$};
\node[draw] (y) at (1.25,0) {\tiny $Y$};
\path[draw] (A)--(B);
\path[draw] (A)--(x);
\path[draw] (B)--(y);
\path[draw] (A)--(a);
\path[draw] (B)--(b);
}.
\end{displaymath}
The leaves $a$ and $b$ are not separated, as
\begin{displaymath}
\tikz{
\node[boron] (A) at (0,0) {};
\node[leaf,label={\tiny $a/b$}] (a) at (0,.5) {};
\node[draw] (x) at (-.75,0) {\tiny $X$};
\node[draw] (y) at (.75,0) {\tiny $Y$};
\path[draw] (A)--(x);
\path[draw] (A)--(y);
\path[draw] (A)--(a);
}
\end{displaymath}
is also an amalgamation of $T \setminus a$ and $T \setminus b$ over $T \setminus \{a,b\}$.
\end{proof}

We note the following corollary of the proposition. It would be useful to know when this corollary holds for more general Fra\"iss\'e classes.

\begin{corollary} \label{cor:treesep}
For trees, ``separated'' and ``stably separated'' coincide.
\end{corollary}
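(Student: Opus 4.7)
The plan is to verify directly, using the concrete characterization given in Proposition~\ref{prop:treesep}, that the property of being separated is preserved under embeddings of tree structures. One direction is immediate: stably separated implies separated by taking the identity embedding. So fix a tree $T$ with separated leaves $a,b$ and an embedding $T \hookrightarrow T'$; I must show $a,b$ remain separated in $T'$.

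I will realize the embedding concretely as follows. Work with reduced representatives and regard $L(T) \subseteq L(T')$; then $T$ is recovered by taking the subtree of $T'$ spanned by $L(T)$ and collapsing valence-two nodes. Under this picture, every node of $T$ of valence $k$ corresponds to a ``branching node'' in $T'$ (a common vertex on three paths to leaves in $L(T)$) whose valence in $T'$ is at least $k$, and distinct nodes of $T$ give distinct nodes of $T'$. Write $v_a,v_b$ for the nodes of $T'$ corresponding to $N(a),N(b)$, and $N'(a),N'(b)$ for the neighbors of $a,b$ in the reduced $T'$. The $a$-to-$v_a$ path in $T'$ may pass through additional branching nodes, so $N'(a)$ equals $v_a$ or lies strictly between $a$ and $v_a$; similarly for $N'(b)$. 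Since $v_a \ne v_b$ whenever $N(a) \ne N(b)$, this also forces $N'(a) \ne N'(b)$.

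Now I argue case by case, following Proposition~\ref{prop:treesep}. In case (a), there is an intermediate node $m$ between $N(a)$ and $N(b)$ in $T$; its image $v_m$ is a branching node of $T'$ lying strictly between $v_a$ and $v_b$ on the $a$-to-$b$ path, hence strictly between $N'(a)$ and $N'(b)$, so condition (a) of Proposition~\ref{prop:treesep} holds for $T'$. In case (b), say $N(a)$ has valence $\ge 4$ in $T$; then $v_a$ has valence $\ge 4$ in $T'$. If $N'(a) = v_a$, condition (b) holds in $T'$. Otherwise $N'(a) \ne v_a$, and then the $N'(a)$-to-$N'(b)$ path in $T'$ contains $v_a$ as a strictly intermediate vertex, so condition (a) holds in $T'$. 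Either way $a,b$ are separated in $T'$, which completes the proof.

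There is no serious obstacle; the argument is pure bookkeeping. The only point that requires care is tracking how the neighbor $N(a)$ of $a$ in the reduced $T$ may differ from the neighbor $N'(a)$ in the reduced $T'$, since new leaves of $T'$ can be attached along the $a$-to-$N(a)$ edge and create new intermediate branching nodes. The observation that this phenomenon can only push the two neighborhoods \emph{further apart}, never collapse them, is what makes the case analysis go through.
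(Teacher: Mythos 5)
Your proof is correct. The paper states this corollary without giving an explicit argument, treating it as an immediate consequence of Proposition~\ref{prop:treesep}, and your derivation---checking that each of the two local conditions in that proposition persists when $T$ is embedded into a larger tree, with the only subtlety being how $N(a),N(b)$ relate to $N'(a),N'(b)$---is exactly the intended one.
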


\subsection{Minimal marked trees} \label{ss:mintree}

Let $T$ be a marked tree; we denote the marked leaf by $\ast$. Assuming $T$ has at least four leaves, we assign it one of the following types:
\begin{itemize}
\item[(I${}_m$)] If $N(\ast)$ has valence $m \ge 4$.
\item[(II)] If $N(\ast)$ has valence three and two neighbors are leaves.
\item[(III)] If $N(\ast)$ has valence three and exactly one neighbor is a leaf.
\end{itemize}
If $T$ has $m \le 3$ leaves then we assign $T$ type I${}_m$.

We let $X_{m+1}$ (for $m \ge 2$), $Y$, and $Z$ denote the following marked trees
\begin{displaymath}
%\tikz{
%\node[chosen] (x) at (0,0) {};
%}
%
%\tikz{
%\node[chosen] (x) at (0,0) {};
%\node[leaf] (y) at (.5,0) {};
%\path[draw] (x)--(y);
%}
%
\tikz{
\node[boron] (A) at (0,0) {};
\node[chosen] (x) at (-0.5,0) {};
\node[mid] (a1) at (.5,-.1) {};
\node[mid] (a2) at (.5,0) {};
\node[mid] (a3) at (.5,.1) {};
\node () at (.7, 0) {\tiny $[m]$};
\path[draw] (A)--(x);
\path[draw] (A)--(a1);
\path[draw] (A)--(a2);
\path[draw] (A)--(a3);
}
\qquad
\tikz{
\node[boron] (A) at (0,0) {};
\node[boron] (B) at (0.5,0) {};
\node[leaf] (x) at (-0.5,0) {};
\node[leaf] (y) at (1,0) {};
\node[leaf] (a) at (0,0.5) {};
\node[chosen] (b) at (0.5,0.5) {};
\path[draw] (A)--(B);
\path[draw] (A)--(x);
\path[draw] (B)--(y);
\path[draw] (A)--(a);
\path[draw] (B)--(b);
}
\qquad
\tikz{
\node[boron] (A) at (0,0) {};
\node[boron] (B) at (0.5,0) {};
\node[boron] (C) at (1,0) {};
\node[leaf] (x) at (-0.5,0) {};
\node[leaf] (y) at (1.5,0) {};
\node[leaf] (a) at (0,0.5) {};
\node[chosen] (b) at (0.5,0.5) {};
\node[leaf] (c) at (1,0.5) {};
\path[draw] (A)--(B);
\path[draw] (B)--(C);
\path[draw] (A)--(x);
\path[draw] (C)--(y);
\path[draw] (A)--(a);
\path[draw] (B)--(b);
\path[draw] (C)--(c);
}
\end{displaymath}
To be clear, $X_{m+1}$ has $m+1$ leaves total, and they all connect to a common node. We also let $X_m$ for $m=1,2$ be the unique marked tree with $m$ vertices. We note that $X_m$ has type I${}_m$ for any $m \ge 1$, while $Y$ has type II and $Z$ has type III.

\begin{proposition} \label{prop:minimal}
We have the following:
\begin{enumerate}
\item The minimal marked trees are exactly $X_m$ (for $m \ge 1$), $Y$, and $Z$.
\item Two marked trees are equivalent if and only if they have the same type.
\item There are no non-trivial equivalences among the minimal marked trees.
\end{enumerate}
\end{proposition}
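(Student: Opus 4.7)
My plan is to combine Proposition~\ref{prop:treesep} with Corollary~\ref{cor:treesep} (so that ``separated'' and ``extraneous'' coincide) and then carry out a local case analysis at the node $N(*)$. All trees in sight are reduced.

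For part~(a), the starting observation is that in a reduced tree with at least four leaves, a non-marked leaf $b$ fails to be separated from $*$ precisely when either $N(b)=N(*)$, or $N(b)$ is a neighbor of $N(*)$ and both nodes have valence exactly~$3$. I would then split on the valence $v$ of $N(*)$. If $v \ge 4$, Proposition~\ref{prop:treesep}(b) forces every non-marked leaf $b$ to satisfy $N(b) = N(*)$, giving $T = X_v$. If $v = 3$, let the two non-marked neighbors of $N(*)$ be $P_1,P_2$. Any $P_i$ that is a node must have valence exactly~$3$ (otherwise a leaf at $P_i$ would be separated by clause~(b) of Proposition~\ref{prop:treesep}), and all further neighbors of such a $P_i$ must themselves be leaves (otherwise a leaf beyond $P_i$ would be separated by clause~(a)). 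The three sub-cases ``both $P_i$ are leaves,'' ``one leaf, one node,'' and ``both nodes'' produce exactly $X_3$, $Y$, and $Z$. The very small cases $T = X_1$ and $T = X_2$ are handled by direct inspection; in $X_2$ the two amalgamations of $\{*\}$ and $\{b\}$ over $\varnothing$ (one identifying $*$ with $b$, one not) certify that $b$ is not separated from $*$.

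For parts~(b) and~(c), I would first verify that the type labeling is preserved by the generating moves of the equivalence relation $\sim$. Isomorphism is automatic. For deleting an extraneous leaf $b$, the relation $N(b) \ne N(*)$ ensures that $N(*)$ keeps the same valence and the same leaf/node composition of its neighbors. The one subtlety is that, when $N(b)$ has valence~$3$, the deletion drops it to valence~$2$ and it is eliminated in the reduction; but then either $N(b)$ is not adjacent to $N(*)$, in which case nothing near $N(*)$ changes, or $N(*)$ has valence $\ge 4$ and the reduction merely swaps $N(b)$ in the neighbor list of $N(*)$ for another node, preserving the type $\rI_v$. Having established type-invariance, I would induct on $|T|$: if $(T,*)$ is not minimal, remove any extraneous leaf to obtain a smaller marked tree of the same type, and appeal to the inductive hypothesis to equate it with the minimal representative of that type classified in part~(a). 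This gives (b) at once, and (c) follows because the $X_m$, $Y$, and $Z$ carry pairwise distinct type labels.

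The main technical obstacle I anticipate is the bookkeeping for the reduction step: one must verify that the valence-$2$ cleanup never converts a type~II tree into a type~III one or vice versa, nor lowers the subscript of a type~$\rI_v$ tree. This is precisely the point where the two clauses of Proposition~\ref{prop:treesep} interact, and it is the only non-formal ingredient in the argument.
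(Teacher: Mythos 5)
Your proof is correct and follows essentially the same route as the paper: classify minimal marked trees by the valence of $N(*)$ for (a), show type-invariance of removing an extraneous leaf and reduce to minimal representatives for (b), and deduce (c) from the distinctness of the types. (One cosmetic slip: when $N(b)$ has valence $3$ and is adjacent to $N(*)$, the reduction may replace $N(b)$ in $N(*)$'s neighbor list with a \emph{leaf} rather than a node, but as you observe this cannot change the type $\mathrm{I}_v$ since it depends only on the valence of $N(*)$.)
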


\begin{proof}
(a) Let $T$ be a minimal marked tree with $\ge 4$ leaves. If $N(\ast)$ has valence $\ge 4$ then there cannot be any other nodes in $T$ by minimality, and so $T$ is isomorphic to some $X_m$. Now suppose $N(\ast)$ has valence three. Any other node must be adjacent to $N(\ast)$ and of valence three, by minimality. Thus $T$ is isomorphic to $Y$ or $Z$.

(b) We first show that equivalent marked trees have the same type. It suffices to show that removing an extraneous leaf does not change the type. Thus let $T$ be a marked tree and let $c$ be an extraneous leaf. First suppose $T$ has type I${}_m$, i.e., $N(\ast)$ has valence $m \ge 4$. Since $N(c) \ne N(\ast)$, it follows that $N(\ast)$ still has valence $m$ in $T \setminus c$, and so $T \setminus c$ also has type I${}_m$. Now suppose $T$ has type~II, i.e., $N(\ast)$ has valence three and two of its neighbors are leaves. Since $c$ is not one of its neighbors, this remains true after deleting $c$, and so $T \setminus c$ has type~II. Finally, suppose $T$ has type~III, so that $N(\ast)$ has valence three and $\ast$ is its unique leaf neighbor. If $N(c)$ is one of the neighbors of $N(\ast)$ then it must have valence $\ge 4$ for $c$ to be separated from $\ast$, and so deleting $c$ will not delete the node $N(c)$. Thus $T \setminus c$ still has type~III. If $N(c)$ is not one of the neighbors of $N(\ast)$ then deleting $c$ will not convert any of the nodes neighboring $N(\ast)$ into leaves, and so again $T \setminus c$ still has type~III.

We now show that marked trees of the same type are equivalent. Thus suppose that $X$ and $Y$ have the same type. Let $X'$ and $Y'$ be minimal marked trees equivalent to $X$ and $Y$. By the above paragraph, $X'$ and $Y'$ have the same common type as $X$ and $Y$. From the classification of minimal marked trees in (a), and the computation of their types, we see that $X'=Y'$. Thus $X$ and $Y$ are equivalent.

(c) This follows from (a) and (b) since the minimal marked trees have distinct types.
\end{proof}

\subsection{The $\Theta$ ring} \label{ss:theta-tree}

We now determine the measures for $\fT$. Write $x_m$, $y$, and $z$ for the classes of $X_m$, $Y$, and $Z$ in $\Theta(\fT)$. The following is our main result:

\begin{theorem} \label{thm:Theta}
We have an isomorphism
\begin{displaymath}
i \colon \Theta(\fT) \to \bZ[u,v]/(uv)
\end{displaymath}
satisfying
\begin{align*}
i(x_1) &= u+v+2 & i(x_m) &= v+1-(m-2)(u+1) \\
i(x_2) &= u+v+1 & i(y) &= u \\
i(x_3) &= u+v & i(z) &= u-v,
\end{align*}
where here $m \ge 4$.
\end{theorem}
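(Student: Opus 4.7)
The plan is to apply Theorem~\ref{thm:refined} to obtain an explicit presentation of $\Theta(\fT)$ and then simplify. By Proposition~\ref{prop:minimal}(a) the minimal marked trees are $X_m$ for $m \ge 1$, $Y$, and $Z$, so $P^* = \bZ[x_1, x_2, \ldots, y, z]$; by Proposition~\ref{prop:minimal}(c) there are no non-trivial equivalences among them, so $\fb_0^* = 0$. By Corollary~\ref{cor:treesep}, ``stably separated'' and ``separated'' coincide for trees, so the minimality conditions on L- and Q-data reduce to concrete conditions formulated directly in terms of Proposition~\ref{prop:treesep}. What remains is to enumerate the minimal L- and Q-data, compute the generators of $\fb_1^*$ and $\fb_2^*$, and then simplify the resulting quotient.

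For the L-relations, one has $X_m \setminus \ast \cong X_{m-1}$, $Y \setminus \ast \cong X_3$, and $Z \setminus \ast \cong Y$, so the minimal L-data up to isomorphism are $(X_m, X_m)$ for $m \ge 1$, the cross case $(X_4, Y)$, two sub-cases of $(Y, Y)$ distinguished by how the identification $i$ matches the pair-partners of the marked points, and the various sub-cases of $(Z, Z)$. For each L-datum I would enumerate the proper amalgamations as tree structures on the relevant leaf set (using the classification of reduced trees of small leaf count into star, caterpillar, and $Z$-shape), identify the type of the marked point via Proposition~\ref{prop:minimal}(b), and thereby read off the relation. The central relations that emerge are
\begin{displaymath}
x_1 = 1 + x_2, \qquad x_2 = 1 + x_3, \qquad x_3 = 1 + x_4 + 3y, \qquad x_m = 1 + x_{m+1} + y \text{ for } m \ge 4,
\end{displaymath}
together with the crucial closing relation $x_4 + y + z = -1$ coming from the ``matched pairing'' sub-case of $(Y, Y)$, which has $\delta = 1$ and produces four amalgamations: one caterpillar of class $x_4$, one $Z$-shape amalgamation of class $z$, and two further $Z$-shape amalgamations of class $y$. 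I expect the remaining L-data, namely $(X_4, Y)$, the mismatched $(Y, Y)$, and $(Z, Z)$, to yield only trivial or redundant relations; verifying this is a case-by-case bookkeeping exercise.

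For the Q-relations, the key minimal Q-datum is $(Z, m, u_1)$, where $m$ is the unique leaf attached to the middle node of $Z$ and $u_1$ is a leaf on an end node: these are not separated (both lie on adjacent valence-$3$ nodes), not swappable by any automorphism of $Z$, and each of the other leaves fails to be extraneous in $(Z, m)$. A direct computation gives $[Z, m] = z$, $[Z, u_1] = y$, and $Z \setminus m \cong Z \setminus u_1 \cong Y$ with the surviving marked leaf of type II, so the Q-relation reads $yz = y^2$, i.e., $y(z - y) = 0$. All other Q-data on minimal marked trees are either symmetric or have separated marked points, hence yield nothing new. Combining everything: the L-relations let us solve for $x_m$ (with $m \neq 3$) and $z$ in terms of $y$ and $x_3$, giving in particular $x_3 = 2y - z$, whence the Q-relation $y(z - y) = 0$ becomes $y(y - x_3) = 0$. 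Setting $u = y$ and $v = x_3 - y$ converts this into $uv = 0$, and back-substitution into the formulas for the remaining generators recovers the explicit values of $i(x_m)$, $i(y)$, $i(z)$ claimed in the theorem. The main technical obstacle is the careful combinatorial enumeration of amalgamations for the L-data involving $Y$- and $Z$-shaped trees, where one must track the specific pair structure imposed by the identification $i$ rather than only the bare $X_3$- or $Y$-structure on the common substructure $W$.
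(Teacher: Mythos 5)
Your proposal is correct and follows essentially the same route as the paper: invoke Theorem~\ref{thm:refined}, use Proposition~\ref{prop:minimal} and Corollary~\ref{cor:treesep} to cut down to minimal marked trees and the stably-separated condition, enumerate the L- and Q-relations, and simplify. The computed linear forms and the single quadratic relation $y(z-y)$ all match Corollaries~\ref{cor:linear} and~\ref{cor:quadratic}, and your closing change of variables $u=y$, $v=x_3-y$ recovers exactly the stated formulas.

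One small imprecision worth flagging: you list $(X_4,Y)$, the ``mismatched'' $(Y,Y)$, and extra sub-cases of $(Z,Z)$ as minimal L-data and then argue they give trivial relations. In fact these are \emph{separated} L-data (each has a unique, proper amalgamation), so by the paper's definition they are not minimal at all --- the paper proves as a standalone proposition that the only minimal L-data are the duplicates of $X_m$, $Y$, $Z$. Your fallback observation (that these L-data produce $\lbb X\rbb-\lbb U\rbb$ with equivalent marked trees, hence vanish under $\pi$) is correct and gives the same conclusion, so this is a presentational rather than mathematical gap; it would be cleaner to verify separatedness directly, as the paper does, rather than compute the relations and note they cancel.
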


We thus see that $\Spec(\Theta(\fT))$ is a union of two lines, meeting at a point. In other words, there are two 1-parameter families of measures for $\fT$. The family with $v=0$ is generically regular, and will be the measure of we focus on in this paper; see \S \ref{ss:goodmeas} for more. The family with $u=0$ is nowhere regular. We say nothing more about it in this paper, but it would be an interesting topic for further study.

To prove Theorem~\ref{thm:Theta}, we use the method of \S \ref{ss:refined}. We have already seen that there are no non-trivial equivalences amongst minimal marked trees, and so the ideal $\fb_0^*$ vanishes. We now determine the remaining ideals appearing in this presentation.

\begin{proposition}
The minimal L-data are the duplicates of the minimal marked trees.
\end{proposition}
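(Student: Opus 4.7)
The proof has two directions. For the easier one, I verify that the duplicate $((X,a),(X,a),\id)$ of any minimal marked tree $(X,a)$ is a minimal L-datum. The condition that no element of $X$ is stably separated from $a$ is automatic from the minimality of $(X,a)$ combined with Corollary~\ref{cor:treesep}. For ``not separated,'' observe that the identity of $X\setminus a$ extends to the identity on $X$, so $\delta = 1$, meaning the improper amalgamation (identifying $a$ with $b$, which recovers $X$) exists. Thus the datum has at least one amalgamation which is improper; by the definition of separated, this already suffices to conclude that the duplicate is not separated, regardless of how many proper amalgamations exist.

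For the converse, suppose $((X,a),(Y,b),i)$ is a minimal L-datum. Condition~(ii) of minimality combined with Corollary~\ref{cor:treesep} forces $(X,a)$ to be a minimal marked tree, so by Proposition~\ref{prop:minimal} one has $(X,a) \cong X_m$ (for some $m \ge 1$), $Y$, or $Z$. The strategy is to enumerate all one-leaf extensions $(Y,b)$ of $X \setminus a$ and verify that any non-duplicate choice yields a separated L-datum, contradicting minimality.

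For $X = X_m$ with $m \le 3$, the complement $X \setminus a$ has at most two leaves, the one-point extension is unique up to isomorphism, and $(Y,b)$ must be the duplicate. For $X = X_m$ with $m \ge 4$, there are exactly two non-isomorphic ways to attach a leaf to the star $X_{m-1}$: at the center (recovering the duplicate $X_m$) or by subdividing an edge (producing a ``broom''). In the broom case, any amalgamation $Z$ must restrict on one side to a star and on the other to a broom; enumerating $(m+1)$-leaf reduced trees, the only $Z$ accommodating both structures is the $(2, m{-}1)$-tree, and the positions of $a$ and $b$ are then forced. Combined with $\delta = 0$, this yields a unique (proper) amalgamation, so the datum is separated. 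The cases $X = Y$ and $X = Z$ are handled analogously: $X\setminus a$ is respectively $X_3$ or the 4-leaf broom, each admitting only a handful of one-point extensions, and for each non-duplicate option a direct check among the short list of 5- or 6-leaf reduced trees yields a unique amalgamation. The main obstacle is the bookkeeping of this combinatorial case analysis, particularly for $X = Z$ where the candidate $(Y,b)$'s include a $(3,2)$-tree and two marked variants of the $Z$-shape, but each individual case reduces to a finite elementary verification via Proposition~\ref{prop:treesep}.
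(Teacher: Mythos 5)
Your proposal follows the same approach as the paper: use Corollary~\ref{cor:treesep} (``separated'' $=$ ``stably separated'' for trees) to reduce to $(X,a)$ being a minimal marked tree, invoke Proposition~\ref{prop:minimal} to pin $X$ down as $X_m$, $Y$, or $Z$, and then case-check the possible one-point extensions $(Y',b')$ of $X\setminus a$, showing every non-duplicate yields a separated L-datum. You additionally spell out the (easy) converse direction explicitly — that duplicates of minimal marked trees really are minimal L-data — and your observation that $\delta=1$ for a duplicate already rules out separatedness regardless of the number of proper amalgamations is a clean way to phrase that step; the paper leaves this implicit. The case analysis for $X=Y$ and $X=Z$ is sketched rather than carried out (you assert uniqueness of the amalgamation and $\delta=0$ without the explicit verification), but that matches the paper's own ``it is now a simple matter'' level of detail, and the broom case you do work through mirrors the paper's single worked example (which uses Proposition~\ref{prop:treesep}(b) directly to get uniqueness, whereas you enumerate amalgamations; both are valid).
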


\begin{proof}
Every minimal $L$-datum is of the form $(T, T', i)$, where $T$ is a minimal marked tree, $T'$ is obtained from $T$ by deleting the marked vertex $a$ and adding in a new marked vertex $b$ (possibly in the same place), and $i \colon T \setminus a \to T' \setminus b$ is the identity; this follows from the fact that ``separated'' and ``stably separated'' are the same (Corollary~\ref{cor:treesep}). Thus $T$ is one of $X_m$, $Y$, or $Z$ by Proposition~\ref{prop:minimal}. It is now a simple matter to verify the lemma.

We explain one case in detail. Suppose $T=X_{m+1}$ with $m \ge 3$. Ignoring minimality, there are two possibilities for $T'$:
\begin{displaymath}
\tikz{
\node[boron] (A) at (0,0) {};
\node[chosen] (x) at (-0.5,0) {};
\node[mid] (b1) at (.5,-.1) {};
\node[mid] (b2) at (.5,0) {};
\node[mid] (b3) at (.5,.1) {};
\node () at (.7, 0) {\tiny $[m]$};
\path[draw] (A)--(x);
\path[draw] (A)--(b1);
\path[draw] (A)--(b2);
\path[draw] (A)--(b3);
}
\qquad\qquad
\tikz{
\node[boron] (A) at (0,0) {};
\node[boron] (B) at (.5,0) {};
\node[chosen] (x) at (-.5,.25) {};
\node[leaf] (y) at (-.5,-.25) {};
\node[mid] (b1) at (1,-.1) {};
\node[mid] (b2) at (1,0) {};
\node[mid] (b3) at (1,.1) {};
\node () at (1.45, 0) {\tiny $[m-1]$};
\path[draw] (A)--(B);
\path[draw] (A)--(x);
\path[draw] (A)--(y);
\path[draw] (B)--(b1);
\path[draw] (B)--(b2);
\path[draw] (B)--(b3);
}
\end{displaymath}
In the first case, $T'$ is simply $T$, and this is one of the minimal L-data appearing in the statement of the lemma. In the second case, the L-datum is separated (since the node in $T$ has valence $\ge 4$), and thus not minimal.
\end{proof}

\begin{corollary} \label{cor:linear}
The ideal $\fb_1^*$ is generated by the following linear forms:
\begin{align*}
& 1+x_2-x_1, & & 1+x_{m+1}-x_m+y, \\
& 1+x_3-x_2, & & 1+z+y+x_4, \\
& 1+x_4-x_3+3y,
\end{align*}
where $m \ge 4$.
\end{corollary}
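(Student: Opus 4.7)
By the preceding proposition, every minimal L-datum is the duplicate of some minimal marked tree, and by Proposition~\ref{prop:minimal} these are exactly $X_m$ for $m\geq 1$, $Y$, and $Z$. Hence $\fb_1^*$ is generated by the images $\pi(L(X_m))$ for $m\geq 1$, together with $\pi(L(Y))$ and $\pi(L(Z))$. My plan is to compute each of these forms directly: for each minimal marked tree $T$, I would enumerate the proper self-amalgamations of $T$ over $T\setminus\ast$, then use the type classification of Proposition~\ref{prop:minimal} (types I$_m$, II, III) to identify each summand with one of the generators $x_m$, $y$, or $z$.

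The cases $X_1$, $X_2$, $X_3$ should be immediate. For $X_3$, amalgamating over $X_2$ produces all four reduced trees on four labeled leaves (the star and three caterpillars); each caterpillar acquires type II under the marking, yielding $L(X_3) = x_3 - 1 - x_4 - 3y$. For $L(X_m)$ with $m\geq 4$, I would use a short valence argument: any tree on $m+1$ leaves whose restriction upon deletion of either marked leaf is the star $X_m$ must itself be either the star $X_{m+1}$ or the caterpillar whose two internal nodes separately carry the two marked points and the common leaves (type II), giving the relation $1+x_{m+1}-x_m+y$.

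For $L(Y)$ I would enumerate reduced trees on five labeled leaves (one star, ten asymmetric caterpillars, fifteen symmetric caterpillars) and check the restriction conditions; I expect exactly four valid amalgamations, with types I$_4$, II, II, III. For $L(Z)$, working on six labeled leaves, I would organize the enumeration by the position of $b$ and $b'$ relative to the backbone path from the $\{x,a\}$-node to the $\{y,c\}$-node; this should again yield four valid amalgamations, with types I$_4$, II, III, III. A key feature is that in both cases the linear form should simplify to $-(1 + x_4 + y + z)$, so $L(Y)$ and $L(Z)$ contribute the single relation $1+z+y+x_4$ rather than two distinct ones.

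The hard part will be the enumeration for $L(Z)$, where I must distinguish subtly different configurations: $b$ and $b'$ may share a common valence-four node, sit at two distinct nodes along the backbone (giving two non-isomorphic amalgamations that are nonetheless isomorphic as unmarked trees), or attach jointly through a side-branch internal node. For each candidate I need to verify that no suppressible valence-two node remains, that distinct candidates give distinct quaternary relations on the six labeled leaves, and that deleting either $b$ or $b'$ yields the specific 3-node chain structure of $Z$ rather than some other 5-leaf tree.
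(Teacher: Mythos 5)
Your proposal is correct and follows essentially the same route as the paper: identify the minimal L-data as duplicates of $X_m$, $Y$, $Z$ (via the preceding proposition and Corollary~\ref{cor:treesep}), enumerate the proper amalgamations in each case, read off the type of each resulting marked tree via Proposition~\ref{prop:minimal}, and observe that $L(Y)$ and $L(Z)$ both reduce to $-(1+x_4+y+z)$. The paper only writes out the $X_3$ case and notes the rest are similar, whereas you sketch the enumeration in all cases, but the method is the same; your type counts ($I_4$, II, II, III for $Y$ and $I_4$, II, III, III for $Z$) are accurate.
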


\begin{proof}
The forms in the left column are $L(X_m)$ for $1 \le m \le 3$. The first form in the right column is $L(X_m)$ with $m \ge 4$. The second form in the right column is $L(Y)$, which, it turns out, coincides with $L(Z)$.

We explain the computation of $L(X_3)$ in detail; the remaining cases are similar. For this, it is useful to introduce two copies of the tree $X_3$
\begin{displaymath}
\tikz{
\node[boron] (A) at (0,0) {};
\node[leaf,label={\tiny 1}] (x) at (-0.5,0) {};
\node[leaf,label={\tiny 2}] (y) at (.5,0) {};
\node[leaf,label={\tiny $a$}] (a) at (0,.5) {};
\path[draw] (A)--(x);
\path[draw] (A)--(y);
\path[draw] (A)--(a);
}
\qquad
\tikz{
\node[boron] (A) at (0,0) {};
\node[leaf,label={\tiny 1}] (x) at (-0.5,0) {};
\node[leaf,label={\tiny 2}] (y) at (.5,0) {};
\node[leaf,label={\tiny $b$}] (a) at (0,.5) {};
\path[draw] (A)--(x);
\path[draw] (A)--(y);
\path[draw] (A)--(a);
}
\end{displaymath}
Here $a$ and $b$ are the marked vertices. The self-amalgamations are the trees with vertices labeled 1, 2, $a$, $b$, such that when we delete $a$ or $b$ we obtain the above labeled trees; note that $a$ and $b$ are allowed to label the same vertex. The amalgamations are as follows:
\begin{displaymath}
\tikz[baseline=0pt]{
\node[boron] (A) at (0,0) {};
\node[leaf,label={\tiny 1}] (x) at (-0.5,0) {};
\node[leaf,label={\tiny 2}] (y) at (.5,0) {};
\node[leaf,label={\tiny $a/b$}] (a) at (0,.5) {};
\path[draw] (A)--(x);
\path[draw] (A)--(y);
\path[draw] (A)--(a);
}
\qquad
\tikz[baseline=0pt]{
\node[boron] (A) at (0,0) {};
\node[boron] (B) at (.5,0) {};
\node[leaf,label={\tiny 1}] (x) at (-0.5,0) {};
\node[leaf,label={\tiny 2}] (y) at (1,0) {};
\node[leaf,label={\tiny $a$}] (a) at (0,.5) {};
\node[leaf,label={\tiny $b$}] (b) at (.5,.5) {};
\path[draw] (A)--(B);
\path[draw] (A)--(x);
\path[draw] (A)--(a);
\path[draw] (B)--(b);
\path[draw] (B)--(y);
}
\qquad
\tikz[baseline=0pt]{
\node[boron] (A) at (0,0) {};
\node[boron] (B) at (.5,0) {};
\node[leaf,label={\tiny 1}] (x) at (-0.5,0) {};
\node[leaf,label={\tiny 2}] (y) at (1,0) {};
\node[leaf,label={\tiny $b$}] (a) at (0,.5) {};
\node[leaf,label={\tiny $a$}] (b) at (.5,.5) {};
\path[draw] (A)--(B);
\path[draw] (A)--(x);
\path[draw] (A)--(a);
\path[draw] (B)--(b);
\path[draw] (B)--(y);
}
\qquad
\tikz[baseline=0pt]{
\node[boron] (A) at (0,0) {};
\node[boron] (B) at (0,.5) {};
\node[leaf,label={\tiny 1}] (x) at (-0.5,0) {};
\node[leaf,label={\tiny 2}] (y) at (.5,0) {};
\node[leaf,label={\tiny $a$}] (a) at (-.25,.75) {};
\node[leaf,label={\tiny $b$}] (b) at (.25,.75) {};
\path[draw] (A)--(B);
\path[draw] (A)--(x);
\path[draw] (B)--(a);
\path[draw] (B)--(b);
\path[draw] (A)--(y);
}
\qquad
\tikz[baseline=0pt]{
\node[boron] (A) at (0,.5) {};
\node[leaf,label={\tiny 1}] (x) at (-0.5,.5) {};
\node[leaf,label={\tiny 2}] (y) at (.5,.5) {};
\node[leaf,label={\tiny $a$}] (a) at (0,1) {};
\node[leaf,label=right:{\tiny $b$}] (b) at (0,0) {};
\path[draw] (A)--(x);
\path[draw] (A)--(a);
\path[draw] (A)--(b);
\path[draw] (A)--(y);
}
\end{displaymath}
For each of the five trees above, we can consider the subtree formed by the vertices 1, 2, and $b$. Letting $i_1, \ldots, i_5$ be the inclusions, the amalgamation equation states that $[X_3] = [i_1]+\cdots+[i_5]$. Now, $i_1$ is the identity, and so $[i_1]=1$. The classes $[i_2]$, $[i_3]$, $[i_4]$ are each equal to $y$, while $[i_5]=x_4$. We thus obtain the equation $x_3=1+3y+x_4$.
\end{proof}

\begin{proposition}
The following is the unique minimal Q-datum
\begin{displaymath}
\tikz{
\node[boron] (A) at (0,0) {};
\node[boron] (B) at (.5,0) {};
\node[boron] (F) at (.5,.5) {};
\node[chosen] (x) at (-0.5,0) {};
\node[chosen] (y) at (1,0) {};
\node[leaf] (a) at (0,0.5) {};
\node[leaf] (f1) at (.25, .75) {};
\node[leaf] (f2) at (.75, .75) {};
\path[draw] (A)--(B);
\path[draw] (B)--(C);
\path[draw] (B)--(F);
\path[draw] (A)--(x);
\path[draw] (B)--(y);
\path[draw] (A)--(a);
\path[draw] (F)--(f1);
\path[draw] (F)--(f2);
}
\end{displaymath}
\end{proposition}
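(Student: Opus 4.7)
The plan is to combine the minimality conditions of Definition~\ref{defn:Qmin} with the separation criterion of Proposition~\ref{prop:treesep} (and Corollary~\ref{cor:treesep}) to progressively trim any candidate Q-datum down to the displayed one.

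First I set $A = N(a)$ and $B = N(b)$. Condition~(ii) (non-separation of $a$ and $b$) combined with Proposition~\ref{prop:treesep} leaves two options: $A = B$, or $A \neq B$ with $A, B$ adjacent and both of valence~$3$. In the first option, transposing $a$ and $b$ is an automorphism of $X$, violating condition~(i). So I am in the second, and I let $p$ and $q$ denote the third neighbors of $A$ and $B$, respectively.

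The core step is to use~(iii) to force every leaf close to $\{A, B\}$. I claim that any leaf $c \notin \{a, b\}$ with $N(c) \notin \{A, B, p, q\}$ would be extraneous in all four of $(X, a)$, $(X, b)$, $(X \setminus a, b)$, $(X \setminus b, a)$. Extraneousness in the first two is immediate from Proposition~\ref{prop:treesep}(a): $N(c)$ is distinct from and not adjacent to either $A$ or $B$, since $p, q$ are the only internal neighbors of $A, B$. For the latter two, the subtlety is that reducing $X \setminus a$ collapses the valence-$2$ node $A$ and makes $p$ a direct neighbor of $B$, and symmetrically for $X \setminus b$; one must verify that $N(c)$ remains non-adjacent to $B$ (resp.\ $A$) after this collapse. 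A parallel computation for a leaf attached to an internal $p$ (resp.\ $q$) shows that such a leaf is extraneous in all four precisely when that node has valence $\geq 4$. Combining these observations forces any internal $p$ or $q$ to have valence exactly~$3$ with all remaining neighbors leaves, for otherwise one produces internal nodes outside $\{A, B, p, q\}$ that carry leaves downstream, which the previous claim forbids. I expect this reduction step -- carefully tracking adjacencies after deletion and reduction -- to be the main obstacle.

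With these constraints in hand, the case analysis closes quickly. If $p$ and $q$ are both leaves, then $X$ has only four leaves and the swap $(a, p) \leftrightarrow (b, q)$ is an automorphism exchanging $a$ and $b$, contradicting~(i). If $p$ and $q$ are both internal, each must be a valence-$3$ node with two leaf neighbors, and then the swap $A \leftrightarrow B$ again exchanges $a$ and $b$, contradicting~(i). The only remaining option, up to relabeling, is $p$ a leaf and $q$ an internal valence-$3$ node with two leaf neighbors $f_1, f_2$, which is exactly the displayed tree. I finish by verifying that this datum genuinely is minimal: (i) holds because $A$ has a leaf neighbor $p$ while $B$ does not, so no automorphism can swap $a$ and $b$; (ii) is built in; and for~(iii), the leaf $p$ fails extraneousness in $(X, a)$ (same parent as $a$), while each $f_i$ fails extraneousness in $(X, b)$ (parent adjacent to $B$, both of valence~$3$).
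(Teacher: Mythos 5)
Your proof is correct and follows essentially the same route as the paper. The paper phrases the second half in terms of the subtrees $X$ and $Y$ hanging from $N(a)$ and $N(b)$ (showing each has at most one node, at most two vertices, and that they cannot be isomorphic), while you phrase it in terms of the third neighbors $p$ and $q$; these are the same decomposition. The reduction step you flag as the likely obstacle does go through cleanly: deleting the leaf $a$ collapses only the valence-$2$ node $A$, so the adjacencies of any $N(c) \notin \{A,B,p,q\}$ are unchanged and it remains non-adjacent to $B$ (the only new adjacency created is $p \sim B$), which gives separatedness in $T \setminus a$ exactly as you claim; the paper elides this check but needs it for the same reason. Your explicit final verification that the displayed datum satisfies (i)--(iii) is a welcome addition that the paper leaves implicit.
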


\begin{proof}
Let $(T,a,b)$ be a minimal Q-datum. Consider the path joining $a$ and $b$ in $T$. If this path contains no nodes then $a$ and $b$ are the unique vertices of $T$, and so the datum is symmetric and therefore not minimal. If the path contains only one node, then $N(a)=N(b)$, and so we are also in the symmetric case. If the path contains more than two nodes then $N(a)$ and $N(b)$ are not neighbors, so $a$ and $b$ are separated, and so the datum is not minimal. We conclude that the path must have exactly two nodes, namely $N(a)$ and $N(b)$. If either of these nodes has valence $\ge 4$ then $a$ and $b$ are separated, and so again the datum is not minimal. Thus each has valence three.

We thus see that $T$ can be drawn as
\begin{displaymath}
\tikz{
\node[boron] (A) at (0,0) {};
\node[boron] (B) at (.75,0) {};
\node[chosen] (x) at (-0.5,0) {};
\node[chosen] (y) at (1.25,0) {};
\node[draw] (a) at (0,.5) {\tiny $X$};
\node[draw] (b) at (.75,.5) {\tiny $Y$};
\path[draw] (A)--(B);
\path[draw] (A)--(x);
\path[draw] (B)--(y);
\path[draw] (A)--(a);
\path[draw] (B)--(b);
\node at (-.8, 0) {\tiny $a$};
\node at (1.5, 0) {\tiny $b$};
}
\end{displaymath}
where $X$ and $Y$ are subtrees. Now, if $X$ has more than one node then there is some vertex $c$ in $X$ such that $N(c)$ is not a neighbor of $N(a)$. Thus $c$ is separated from $a$ in $X \setminus b$ and from $b$ in $X \setminus a$, and so the datum is not minimal. We conclude that $X$ contains at most one node, and similarly for $Y$ (by symmetry). If $X$ has more than two vertices then its unique node has valence $\ge 4$, and so any vertex of $X$ is separated from $a$ in $X \setminus b$ and from $b$ in $X \setminus a$, and so the datum is not minimal. We conclude that $X$ has at most two vertices, and similarly for $Y$ (by symmetry). If $X$ and $Y$ are isomorphic then the datum is symmetric, and thus not minimal. We conclude that one of $X$ and $Y$ has one vertex, and the other has two, which yields the stated result.
\end{proof}

\begin{corollary} \label{cor:quadratic}
The ideal $\fb_2^*$ is generated by $y(z-y)$.
\end{corollary}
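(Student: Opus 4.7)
The plan is to use Theorem~\ref{thm:refined} together with the previous proposition. That proposition identifies a unique minimal Q-datum, so by definition $\fb_2^*$ is generated by the single element $\pi(Q(X,a,b))$ where $X$ is the displayed tree and $a,b$ are its two marked leaves. The task therefore reduces to identifying the four minimal marked trees equivalent (in the sense of \S\ref{ss:refined}) to $(X,a)$, $(X,b)$, $(X\setminus a, b)$, and $(X\setminus b, a)$, reading off their classes in $\Theta(\fT)$ via Proposition~\ref{prop:minimal}(b), and plugging the results into the definition of $Q$.

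Concretely, $N(a)$ is the left node; it has valence three with neighbors $a$, the extra leaf at that node, and the middle node. Two neighbors are leaves, so $(X,a)$ has type II and hence represents $y$. The vertex $N(b)$ is the middle node; it has valence three with one leaf neighbor (namely $b$) and two node neighbors, so $(X,b)$ has type III and represents $z$. For $(X\setminus a,b)$, deleting $a$ leaves the left node with valence two, which reduces away and attaches the remaining left leaf directly to the middle node; in the reduced tree $N(b)$ again has two leaf neighbors and one node neighbor, giving type II and the class $y$. The case $(X\setminus b, a)$ is completely symmetric after noticing that removing $b$ reduces the middle node and joins the left and right nodes directly: in the reduced tree $N(a)$ has two leaf neighbors (the two leaves at the left node) and one node neighbor, so the type is again II and the class is $y$.

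Substituting into
\begin{displaymath}
Q(X,a,b) = \lbb X,a\rbb \cdot \lbb X\setminus a,b\rbb - \lbb X,b\rbb \cdot \lbb X\setminus b,a\rbb
\end{displaymath}
gives $\pi(Q(X,a,b)) = y\cdot y - z\cdot y = -y(z-y)$, so $\fb_2^* = \langle y(z-y)\rangle$.

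There is essentially no obstacle: the only point requiring care is that the types are well-defined only up to the equivalence relation $\sim$, and in particular that reductions of the trees $X\setminus a$ and $X\setminus b$ are computed by successively deleting extraneous leaves (equivalently, collapsing the valence-two node produced by the removal). Invoking Proposition~\ref{prop:minimal}(b), which says type determines the equivalence class, makes this automatic.
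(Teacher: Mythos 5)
Your proof is correct and fills in exactly the computation the paper leaves implicit after the proposition identifying the unique minimal Q-datum: reading off types II, III, II, II for $(X,a)$, $(X,b)$, $(X\setminus a,b)$, $(X\setminus b,a)$ respectively gives $\pi(Q(X,a,b)) = y^2 - zy = -y(z-y)$, which generates the same ideal as $y(z-y)$. This matches the paper's intended approach.
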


\begin{proof}[Proof of Theorem~\ref{thm:Theta}]
Let $P^*$ be the polynomial ring in variables $x_i$, for $i \ge 1$, and $y$ and $z$. Let $\fb^*=\fb_1^*+\fb_2^*$; this ideal is generated by the linear forms listed in Corollary~\ref{cor:linear} and the single quadratic form in Corollary~\ref{cor:quadratic}. By Theorem~\ref{thm:refined}, we have a natural isomorphism $\Theta(\fT)=P^*/\fb^*$. The result now follows from some straightforward algebra.
\end{proof}

\subsection{The primary measure} \label{ss:goodmeas}

Let $R=\bZ[\tau,(\tau-1)^{-1}]$. There is a unique ring homomorphism
\begin{displaymath}
\bZ[u,v]/(uv) \to R, \qquad u \mapsto -\frac{\tau-2}{\tau-1}, \qquad v \mapsto 0
\end{displaymath}
Let $\mu_{\tau} \colon \Theta(\fT) \to R$ be the $R$-valued measure corresponding to this homomorphism via Theorem~\ref{thm:Theta}. This measure satisfies
\begin{displaymath}
\mu_{\tau}(x_1) = \frac{\tau}{\tau-1}, \qquad
\mu_{\tau}(x_2) = \frac{1}{\tau-1}, \qquad
\mu_{\tau}(x_m) = \frac{\tau+1-m}{\tau-1}
\end{displaymath}
\begin{displaymath}
\mu_{\tau}(x_3) = \mu_{\tau}(y) = \mu_{\tau}(z) =  -\frac{\tau-2}{\tau-1},
\end{displaymath}
where $m \ge 4$ in the above equation. We now determine the formula for $\mu_{\tau}$ on an arbitrary tree. We first introduce some notation. For an integer $n \ge 3$, we define the following slight variant of the falling factorial:
\begin{displaymath}
[\tau]_n = (\tau-n+1) \cdots (\tau-2).
\end{displaymath}
For a tree $T$, write $\ell(T)$ for the number of leaves and $n(T)$ for the number of nodes. For a node $x$, we write $v(x)$ for its valence.

\begin{proposition} \label{prop:mu-formula}
For a tree $T$, we have
\begin{displaymath}
\mu_{\tau}(T) = \frac{(-1)^{n(T)} \cdot \tau}{(\tau-1)^{\ell(T)}} \cdot \prod_x [\tau]_{v(x)}
\end{displaymath}
where the product is over nodes $x$.
\end{proposition}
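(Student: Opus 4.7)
My plan is to prove the formula by induction on $\ell(T)$, using the fact that $\mu_\tau$ is a measure: for any leaf $\ast$ of $T$, we have
\[
\mu_\tau(T) \;=\; \mu_\tau(T\setminus\ast)\cdot \mu_\tau\!\big((T\setminus\ast)\to T\big),
\]
where $T\setminus\ast$ is to be understood after reduction, so that it is again a representative of its tree structure. By Proposition~\ref{prop:minimal}, the value $\mu_\tau((T\setminus\ast)\to T)$ depends only on the type of the marked tree $(T,\ast)$, and the required values on the minimal marked trees $X_m, Y, Z$ are already recorded in the display following the definition of $\mu_\tau$ in \S\ref{ss:goodmeas}. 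Thus the proof reduces to a purely combinatorial verification: for each possible type of $(T,\ast)$, the ratio of the right-hand side of the proposed formula for $T$ and $T\setminus\ast$ agrees with $\mu_\tau(x_m)$, $\mu_\tau(y)$, or $\mu_\tau(z)$, as appropriate.

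\emph{Base case:} $T=X_1$ has $\ell(T)=1$, $n(T)=0$, and no nodes; the formula gives $\tau/(\tau-1)$, which is $\mu_\tau(X_1)=\mu_\tau(x_1)$.

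\emph{Inductive step:} Fix a leaf $\ast$ of $T$ (with $\ell(T)\ge 2$) and examine the four cases classifying $(T,\ast)$:
\begin{enumerate}
\item Type $\mathrm{I}_m$ with $m\ge 4$: removing $\ast$ drops the valence of $N(\ast)$ from $m$ to $m-1\ge 3$; all other nodes persist. So $\ell$ drops by $1$, $n$ is unchanged, and the node product changes by the factor $[\tau]_m/[\tau]_{m-1}=\tau-m+1$. The formula therefore predicts the ratio $(\tau+1-m)/(\tau-1)$, matching $\mu_\tau(x_m)$.
\item Type $\mathrm{I}_2$ or $\mathrm{I}_3$: direct one-line checks showing the formula's ratio recovers $\mu_\tau(x_2)=1/(\tau-1)$ and $\mu_\tau(x_3)=-(\tau-2)/(\tau-1)$.
\item Types $\mathrm{II}$ and $\mathrm{III}$: here $N(\ast)$ has valence $3$, so after removing $\ast$ it becomes a degree-two vertex and collapses under reduction. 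The net effect on the invariants is the same in both cases: $\ell$ drops by $1$, $n$ drops by $1$, and the node product loses one factor of $[\tau]_3=\tau-2$; all other node valences are preserved. Hence the formula predicts the ratio $-(\tau-2)/(\tau-1)$ in both cases, which agrees with $\mu_\tau(y)=\mu_\tau(z)$.
\end{enumerate}

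The main (mild) obstacle is the bookkeeping in cases $\mathrm{II}$ and $\mathrm{III}$, where removing $\ast$ forces a reduction step that eliminates the node $N(\ast)$ and re-identifies its remaining two neighbors; I should verify carefully that this reduction affects neither $\ell$ beyond the removal of $\ast$ nor the valences of any other node, so that the product $\prod_x [\tau]_{v(x)}$ is modified only by deletion of the single factor $[\tau]_3$. Once this is checked, assembling the four cases completes the induction, and the equality of the two expressions for the ratio in each case yields the formula.
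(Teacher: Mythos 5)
Your proposal is correct and follows essentially the same strategy as the paper's proof: induction on the number of leaves, factoring $\mu_\tau(T)=\mu_\tau(T\setminus\ast)\cdot\mu_\tau(T,\ast)$, classifying the marked tree $(T,\ast)$ by type via Proposition~\ref{prop:minimal}, and matching the resulting ratio against the explicit values of $\mu_\tau$ on $x_m$, $y$, $z$. Your treatment is marginally more careful than the paper's in spelling out the base case and the small-tree types $\mathrm{I}_2$, $\mathrm{I}_3$ (the paper folds the three-leaf claw into the ``valence three'' case without noting that its type is $\mathrm{I}_3$ rather than II or III), but the substance is identical.
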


\begin{proof}
Put $\mu=\mu_{\tau}$ for this proof, and let $\mu'(T)$ denote the right side of the above formula; we must show $\mu(T)=\mu'(T)$. By definition, $\mu(T)$ is the measure of the embedding $\emptyset \to T$. If $a \in T$ is any leaf, then this embedding factors as $\emptyset \to T \setminus a \to T$, and so we have
\begin{displaymath}
\mu(T) = \mu(T \setminus a) \cdot \mu(T,a),
\end{displaymath}
where the second factor on the right is the value of $\mu$ on $[T,a] \in \Theta(\fT)$. By induction on the number of leaves, we can assume $\mu(T \setminus a)=\mu'(T \setminus a)$. It thus suffices to show
\begin{displaymath}
\mu'(T) = \mu'(T \setminus a) \cdot \mu(T,a),
\end{displaymath}
We consider two cases.

First suppose $N(a)$ has valence $m \ge 4$. Then $T \setminus a$ has the same number of nodes as $T$, but the valence of $N(a)$ has been reduced by one. Thus
\begin{displaymath}
\frac{\mu'(T)}{\mu'(T \setminus a)} = \frac{\tau-m+1}{\tau-1}
\end{displaymath}
On the other hand, the marked tree $(T,a)$ has type I${}_m$, and so $(T,a)$ is equivalent to $X_m$. Thus $\mu(T,a)=\mu(x_m)$, which agrees with the above.

Next suppose that $N(a)$ has valence three. Then $N(a)$ is removed when we delete $a$; all other nodes remain, and maintain the same valence. Thus
\begin{displaymath}
\frac{\mu'(T)}{\mu'(T \setminus a)} = -\frac{\tau-2}{\tau-1}
\end{displaymath}
On the other hand, the marked tree $(T,a)$ has type~II or~III, and is thus equivalent to $Y$ or $Z$. As $\mu(y)=\mu(z)$ agrees with the above, the result follows.
\end{proof}

\begin{remark}
We alternatively have the formula
\begin{displaymath}
\mu_{\tau}(T) = \frac{(-1)^{n(T)} \cdot \tau}{(\tau-1)^{\ell(T)}} \cdot \prod_{3 \le i} (\tau-i+1)^{n_i(T)},
\end{displaymath}
where $n_i(T)$ is the number of nodes of valence at least $i$.
\end{remark}

Let $k$ be an arbitrary ring, and let $t \in k$ be an element such that $t-1$ is a unit. We then have a homomorphism $R \to k$ via $\tau \mapsto t$. We define $\mu_t$ to be the $k$-valued measure obtained by composing $\mu_{\tau}$ with this homomorphism. We say that the parameter $t$ is \defn{non-singular} if $\mu_t$ is regular, and \defn{singular} otherwise. If $k$ is a field of characteristic~0, then the formula in Proposition~\ref{prop:mu-formula} shows that $t$ is singular if and only it $t \in \bN$.

The measure $\mu$ is valued in the domain $R$ and never takes the value~0, and so for an embedding $i \colon T \to T'$ we have $\mu(i)=\mu(T) \mu(T')^{-1}$. The same formula is true for $\mu_t$ provided $\mu_t(T')$ is a unit. In general, to compute $\mu_t(i)$, first compute $\mu_{\tau}(i)$ and then make the substitution $\tau \mapsto t$. Practically speaking, this is just $\mu_t(T) \mu_t(T')^{-1}$, with the zero factors in the numerator and denominator formally canceled.

\subsection{Measures at finite level} \label{ss:finite-meas}

Let $n \ge 3$ be an integer, and let $\mu_n$ be the $\bZ[1/(n-1)]$-valued measure on $\fT$ at parameter $t=n$. It follows from Proposition~\ref{prop:mu-formula} that $\mu_n(T)$ vanishes if and only if $T$ has level $>n$. Thus by Proposition~\ref{prop:induced}, $\mu_n$ induces a $\bZ[1/(n-1)]$-valued measure $\mu_n'$ on $\fT_n$ that takes only non-zero values. We thus see that $\mu'_n$ is regular if regarded as a $\bQ$-valued measure. In fact, Proposition~\ref{prop:induced} shows that $\mu'_n$ is regular if regarded as a $\bZ[1/n!]$-valued measure; in particular, the mod $p$ reduction of $\mu'_n$ is regular for $p>n$.

\begin{remark}
One can compute $\Theta(\fT_n)$ using the same approach we used for $\Theta(\fT)$; it is isomorphic to
\begin{displaymath}
\bZ[u]/((n-1)u^2+(n-2)u)
\end{displaymath}
In particular, the spectrum of $\Theta(\fT_n) \otimes \bQ$ consists of two points, one of which corresponds to $\mu'_n$. The details will appear in \cite{Nekrasov}.
\end{remark}

\subsection{The case $t=\infty$} \label{ss:inf-meas}

Recall that we defined the measure $\mu$ using the ring homomorphism
\begin{displaymath}
\bZ[u,v]/(uv) \to R, \qquad u \mapsto -\frac{\tau-2}{\tau-1}, \qquad v \mapsto 0.
\end{displaymath}
We can specialize this homomorphism to $\tau=\infty$; that is, we have a well-defined homomorphism
\begin{displaymath}
\bZ[u,v]/(uv) \to R, \qquad u \mapsto -1, \qquad v \mapsto 0.
\end{displaymath}
We let $\mu_{\infty}$ be the resulting $\bZ$-valued measure for $\fT$. It satisfies
\begin{displaymath}
\mu_{\infty}(x_1)=\mu_{\infty}(x_m)=1, \qquad \mu_{\infty}(x_2)=0, \qquad \mu_{\infty}(x_3)=\mu_{\infty}(y)=\mu_{\infty}(z) = -1
\end{displaymath}
where here $m \ge 4$. 
 In particular we see that $\mu_{\infty}$ is not regular, and does not give rise to semi-simple pre-Tannakian category.  As with the other singular parameter values, it is interesting to then ask what is its semisimplification, and whether or not it admits an abelian envelope, but we won't pursue those any further here.

\section{Tensor categories associated to Fra\"iss\'e classes} \label{s:tencat}

\subsection{Overview}

In this section, we recall the general construction of the tensor category $\uPerm_k(\fF; \mu)$, and establish some new results about it. We give a brief overview. In \S \ref{ss:tensor}, we recall terminology related to tensor categories. In \S \ref{ss:uperm}, we give the construction of $\uPerm_k(\fF; \mu)$. In fact, while we attribute this construction to \cite{repst}, it does not actually appear there in this form; that paper uses the (more or less equivalent) language of oligomorphic groups. We explain this perspective in \S \ref{ss:oligo}. In \S \ref{ss:ss-crit}, we give a criterion for the Karoubi envelope of $\uPerm_k(\fF; \mu)$ to be semi-simple, and in \S \ref{ss:special} we determine the semi-simplification of $\uPerm_k(\fF; \mu)$. These results will be used to establish our main results on arboreal tensor categories.

\subsection{Background on tensor categories} \label{ss:tensor}

Let $k$ be a commutative ring. A \defn{tensor category} is a $k$-linear category $\cC$ equipped with a symmetric monoidal structure that is $k$-bilinear. Let $\bbone$ be the monoidal unit of $\cC$. A \defn{dual} of an object $X$ is an object $Y$ together with maps $X \otimes Y \to \bbone$ and $\bbone \to X \otimes Y$ satisfying the usual identities (see \cite[\S 2.10]{EGNO}). We say that $X$ is \defn{rigid} if it has a dual, and we say that $\cC$ is \defn{rigid} if all of its objects are. A \defn{tensor functor} is a $k$-linear symmetric monoidal functor of tensor categories.

Suppose now that $k$ is a field. We say that the tensor category $\cC$ is \defn{pre-Tannakian} if the following condition holds:
\begin{enumerate}
\item $\cC$ is abelian and all objects have finite length.
\item All $\Hom$ spaces in $\cC$ are finite dimensional over $k$.
\item $\cC$ is rigid.
\item $\End(\bbone)=k$.
\end{enumerate}
We warn the reader that our usage of ``tensor category'' does not match that of \cite{EGNO}.

\subsection{The main construction} \label{ss:uperm}

Let $\fF$ be a Fra\"iss\'e class and let $\mu$ be a measure for $\fF$ valued in a commutative ring $k$. We construct a rigid $k$-linear tensor category $\uPerm_k(\fF; \mu)$. This construction is equivalent to the one in \cite{repst}, as we explain in \S \ref{ss:oligo} below.

We first construct a $k$-linear category $\uPerm^0_k(\fF; \mu)$. The objects of this category are the members of $\fF$; for a structure $X$ in $\fF$, we write $\Vec_X$ for the corresponding object. Write $\Amalg(X,Y)$ for the set of isomorphism classes of amalgamations of $X$ and $Y$. We put
\begin{displaymath}
\Hom(\Vec_X, \Vec_Y) = k[\Amalg(X,Y)],
\end{displaymath}
where $k[S]$ indicates the free $k$-module with basis $S$. For an amalgamation $(Z,i,j)$ of $X$ and $Y$, we typically write $\phi_Z$ for the map $\Vec_X \to \Vec_Y$, omitting $i$ and $j$ from the notation. We now explain composition. Suppose we map morphisms
\begin{displaymath}
\phi_{Y_1} \colon \Vec_{X_1} \to \Vec_{X_2}, \qquad \phi_{Y_2} \colon \Vec_{X_2} \to \Vec_{X_3}.
\end{displaymath}
We define
\begin{displaymath}
\phi_{Y_2} \circ \phi_{Y_1} = \sum_{Y_3} \sum_Z \mu(Y_3 \to Z) \cdot \phi_{Y_3},
\end{displaymath}
where here $Y_3$ varies over all amalgamations of $X_1$ and $X_3$, and $Z$ varies over all amalgamations of $X_1$, $X_2$, and $X_3$ extending $Y_1$, $Y_2$, and $Y_3$. This means that $Z$ is a structure equipped with jointly surjective embeddings $X_i \to Z$, for $1 \le i \le 3$, such that the joint images of $X_1$ and $X_2$ is isomorphic to $Y_1$ (as an amalgamation of $X_1$ and $X_2$), and similarly for $Y_2$ and $Y_3$. Note that $Y_3$ is determined from $Z$, so one can rewrite the above formula as a single sum. See Example~\ref{ex:arboreal} for a concrete example of composition. We extend the composition law $k$-bilinearly to arbitrary morphisms. One can verify the category axioms directly, though this also follows from the discussion in \S \ref{ss:oligo} and the results of \cite{repst}. We note that the identity morphism of $\Vec_X$ is $\phi_X$, where $X$ is regarded as an amalgamation of $X$ and $X$ in the natural manner.

Suppose $i \colon X \to Y$ is an embedding of structures. Then $(Y, i, \id_Y)$ is an amalgamation of $X$ and $Y$. We write $\beta_i \colon \Vec_X \to \Vec_Y$ and $\alpha_i \colon \Vec_Y \to \Vec_X$ for the corresponding morphisms. These morphisms generate all morphisms. Indeed, suppose $\phi_Z \colon \Vec_X \to \Vec_Y$ is a morphism, where $(Z,i,j)$ is an amalgamation of $X$ and $Y$. Then one can verify that $\phi_Z=\alpha_j \circ \beta_i$.

We now define $\uPerm_k(\fF; \mu)$ to be the additive envelope of $\uPerm^0_k(\fF; \mu)$. This category carries a tensor product $\uotimes$ given on objects by
\begin{displaymath}
\Vec_X \uotimes \Vec_Y = \bigoplus_{Z \in \Amalg(X,Y)} \Vec_Z.
\end{displaymath}
We omit the discussion of how $\uotimes$ acts on morphisms, and the various other aspects of the tensor structure. This all follows from the work in \cite{repst}, as explained in \S \ref{ss:oligo}.

The categorical dimension of $\Vec_X$ is $\mu(X)$. The categorical trace of the endomorphism $\phi_Z$ of $\Vec_X$ is $\mu(X)$ if $\phi_Z$ is the identity, and~0 otherwise.

\subsection{The oligomorphic group perspective} \label{ss:oligo}

Suppose $\fF$ is a Fra\"iss\'e class. Then there exists a structure $\Omega$ with the following properties:
\begin{enumerate}
\item $\Omega$ is countable.
\item The \defn{age} of $\Omega$, that is, the class of finite structures embedding into $\Omega$, is $\fF$.
\item $\Omega$ is \defn{homogeneous}: if $X$ and $Y$ are finite substructures of $\Omega$ and $i \colon X \to Y$ is an isomorphism then there exists an automorphism $\sigma$ of $\Omega$ such that $\sigma \vert_X = i$.
\end{enumerate}
The structure $\Omega$ is called the \defn{Fra\"iss\'e limit} of $\Omega$, and is unique up to isomorphism. See \cite[\S 2.6]{CameronBook} or \cite[\S 6.2]{repst} for details.

Let $G$ be the automorphism group of $\Omega$. The $G$-orbits on the set $\Omega^{(n)}$ of $n$-element subsets of $\Omega$ are naturally in bijection with isomorphism classes of $n$-element structures in $\fF$; in particular, there are finitely many such orbits. It follows that $G$ has finitely many orbits on $\Omega^n$ for all $n \ge 0$; in other words, $G$ is an \defn{oligomorphic} permutation group.

For a finite subset $A$ of $\Omega$, let $G(A)$ be the subgroup of $G$ fixing each element of $A$. There is a topology on $G$ for which the $G(A)$'s constitute a neighborhood basis of the identity. With this topology, $G$ is a \defn{pro-oligomorphic group}\footnote{We used the term ``admissible group'' in older versions of the paper.} in the sense of \cite[\S 2.2]{repst}. Let $\sE$ be the set of open subgroups of $G$ of the form $G(A)$ for some finite subset $A$ of $\Omega$. Then $\sE$ is a \defn{stabilizer class} in the sense of \cite[\S 2.6]{repst}.

We say that an action of $G$ on a set is \defn{smooth} if every stabilizer is open, and $\sE$-smooth if every stabilizer belongs to $\sE$. We say that a $G$-set is \defn{finitary} if it has finitely many orbits. If $X$ is a structure in $\fF$ then the set $\cX = \Omega^{[X]}$ of embeddings $X \to \Omega$ is a transitive $\sE$-smooth $G$-set, and all transitive $\sE$-smooth $G$-sets are of this form. If $i \colon X \to Y$ is an embedding of structures then there is a corresponding $G$-morphism $i^* \colon \cY \to \cX$, where $\cY=\Omega^{[Y]}$. Suppose that we have embeddings $i \colon X_0 \to X_1$ and $j \colon X_0 \to X_2$, and let $Y_1, \ldots, Y_n$ be the various amalgamations. Given embeddings $\alpha \colon X_1 \to \Omega$ and $\beta \colon X_2 \to \Omega$ that agree on $X_0$, the induced structure on $\im(\alpha) \cup \im(\beta)$ is an amalgamation of $X_1$ and $X_2$ over $X_0$, and thus one of the $Y_i$'s. It follows that
\begin{displaymath}
\cX_1 \times_{\cX_0} \cX_2 = \coprod_{i=1}^n \cY_i
\end{displaymath}
where the fiber product on the left is taken with respect to $i^*$ and $j^*$.

A $k$-valued \defn{measure} for $G$, relative to $\sE$, is a rule $\mu$ assigning to each map $f \colon \cY \to \cX$ of $\sE$-smooth $G$-sets, with $\cX$ finitary and $\cY$ transitive, a quantity $\mu(f)$ in $k$ satisfying certain axioms; see \cite[\S 4.5]{repst}. Suppose we have such a measure. By (a relative form of) \cite[\S 8.2]{repst}, there is a tensor category $\uPerm_k(G, \sE; \mu')$. The objects of this category are $\Vec_{\cX}$, where $\cX$ is a finitary $\sE$-smooth $G$-set. Morphisms $\Vec_{\cX} \to \Vec_{\cY}$ are given by $G$-invariant $\cY \times \cX$ matrices, and composition is given by matrix multiplication, as discussed in \cite[\S 7]{repst}. The tensor product $\uotimes$ is given on objects by $\Vec_{\cX} \uotimes \Vec_{\cY}=\Vec_{\cX \times \cY}$.

Suppose that $\mu$ is a measure for $\fF$ valued in a ring $k$. Then there is a unique measure $\mu'$ for $G$ relative to $\sE$ such that if $i \colon X \to Y$ is an embedding of structures then $\mu'(i^*)=\mu(i)$. This assertion is the content of \cite[Theorem~6.9]{repst}. We have an equivalence of categories
\begin{displaymath}
\uPerm_k(\fF; \mu) \cong \uPerm_k(G, \sE; \mu')
\end{displaymath}
If $X$ is a structure in $\fF$ and $\cX=\Omega^{[X]}$ is the corresponding $G$-set then $\Vec_X$ and $\Vec_{\cX}$ correspond under the above equivalence. Suppose $Y$ is a second structure and $\cY=\Omega^{[Y]}$. Morphisms $\Vec_X \to \Vec_Y$ are linear combinations of amalgamations of $X$ and $Y$, while morphisms $\Vec_{\cX} \to \Vec_{\cY}$ are $G$-invariant $\cY \times \cX$ matrices. Such a matrix is a function on $G \backslash (\cY \times \cX) \to k$. As we saw above, the $G$-orbits on $\cY \times \cX$ naturally correspond to amalgamations of $X$ and $Y$. Thus we have a canonical identification
\begin{displaymath}
\Hom(\Vec_X, \Vec_Y) = \Hom(\Vec_{\cX}, \Vec_{\cY}).
\end{displaymath}
We leave the remaining details of the equivalence to the reader.

\subsection{Semi-simplicity} \label{ss:ss-crit}

Let $N \ge 1$ be an integer and let $\mu$ be a regular measure for $\fF$ valued in $\bZ[1/N]$. Let $k$ be a field in which $N$ is invertible, and regard $\mu$ as $k$-valued via the natural map $\bZ[1/N] \to k$. We consider the following two conditions:
\begin{itemize}
\item[(SS1)] For every object $X$ of $\fF$, every prime divisor of $\# \Aut(X)$ is a factor of $N$. 
\item[(SS2)] Given a structure $Y$ in $\fF$, a proper substructure $X$, and an integer $h \ge 1$, there exists an embedding $j \colon X \to Z$ in $\fF$ that admits at least $h$ distinct extensions to $Y$.
\end{itemize}
The following proposition is a useful semi-simplicity criterion.

\begin{proposition} \label{prop:ss-crit}
Suppose (SS1) and (SS2) hold. Then the Karoubi envelope of $\uPerm_k(\fF; \mu)$ is a semi-simple pre-Tannakian category.
\end{proposition}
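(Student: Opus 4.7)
The plan is to reduce the statement to showing that, for every structure $X \in \fF$, the endomorphism algebra $A_X := \End_{\uPerm_k(\fF;\mu)}(\Vec_X)$ is semi-simple as a $k$-algebra, and then to prove this algebraic claim using (SS1) and (SS2) in tandem.

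First I would dispatch the remaining pre-Tannakian axioms directly from the construction of \S\ref{ss:uperm}: Hom-spaces are finite-dimensional by definition; $\bbone = \Vec_{\emptyset}$ has endomorphism ring $k$ because $\emptyset$ admits only the trivial self-amalgamation; and every $\Vec_X$ is self-dual, with evaluation and coevaluation built from the diagonal self-amalgamation of $X$ with itself (rescaled by $\mu(X)^{-1}$, which is a unit since $\mu$ is regular). Once each $A_X$ is known to be semi-simple, the Karoubi envelope is automatically semi-simple abelian with objects of finite length, completing the pre-Tannakian conditions.

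To show that $A_X$ is semi-simple, I would introduce the \emph{intersection-rank filtration}: let $F_r A_X$ be the $k$-linear span of those basis elements $\phi_Z$ for which the overlap $|j_1(X) \cap j_2(X)|$ of the two embeddings is at most $r$. This is a two-sided ideal filtration, since composing two self-amalgamations cannot enlarge the overlap beyond what is already present in either factor. The top quotient $A_X/F_{|X|-1}A_X$ is canonically the group algebra $k[\Aut(X)]$, and each lower graded piece $F_r A_X/F_{r-1}A_X$ splits, indexed by isomorphism classes of $r$-element substructures $Y \subseteq X$, into ``matrix units'' over the group algebras $k[\Aut(Y)]$. Hypothesis (SS1), applied to the finitely many substructures $Y$ of $X$ (which all lie in $\fF$ by heredity), places each $k[\Aut(Y)]$ within the scope of Maschke's theorem, so the associated graded $\mathrm{gr}\, A_X$ is semi-simple.

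The main obstacle is showing that this filtration actually splits, i.e., that the Jacobson radical of $A_X$ vanishes; this is where (SS2) is essential. I would argue via the categorical trace pairing $\langle f, g \rangle := \tr_{\cC}(fg)$, whose non-degeneracy on $A_X$ is equivalent, in a rigid tensor category over a field, to the absence of negligible morphisms, and hence to the vanishing of the radical. Non-degeneracy within each diagonal block of $\mathrm{gr}\, A_X$ is furnished by (SS1) (Maschke again). For the off-diagonal terms --- which correspond to potential radical extensions between filtration layers --- I would pass to the oligomorphic description of \S\ref{ss:oligo}, where the pairing becomes a sum over double cosets weighted by restriction maps $\cY \to \cX$; hypothesis (SS2) guarantees overstructures whose restriction fibers are unboundedly large, and this unboundedness forces the relevant pairing matrices to be non-singular. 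The delicate technical heart, and the main obstacle, is organizing the unboundedness argument so that it can be iterated along the filtration and so that the resulting trace computations decouple along the block decomposition supplied by (SS1); once that is in place, $\mathrm{rad}(A_X)=0$ follows and the proposition is proved.
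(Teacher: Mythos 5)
Your high-level plan—reduce to semi-simplicity of each $\End(\Vec_X)$, then use the categorical trace pairing—is the correct shape, and it is also the skeleton of the paper's argument. But the concrete mechanism you propose does not work, and the way (SS1), (SS2) actually enter is quite different.

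The central flaw is the claim that the overlap filtration $F_r A_X$ is a two-sided ideal filtration. Composition in $\uPerm_k(\fF;\mu)$ can \emph{increase} overlap: in a triple amalgamation the first and third copies of $X$ are free to collide even when neither collides with the middle copy. Already for $X=T_1$ a single leaf, $\End(\Vec_X)$ has basis $\phi_{T_1}$ (identity, overlap one) and $\phi_{T_2}$ (disjoint, overlap zero), and the composition rule of \S\ref{ss:uperm} applied to $\mu=\mu_t$ gives
\[
\phi_{T_2}^2 \;=\; -\frac{t-2}{t-1}\,\phi_{T_2} \;+\; \frac{1}{t-1}\,\phi_{T_1},
\]
with nonzero identity coefficient. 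Thus $F_0$ is not an ideal, and the picture of an associated graded built from $k[\Aut(Y)]$ matrix blocks—and with it the appeals to Maschke on graded pieces—has no foundation. This is a genuine disanalogy with partition and Brauer algebras, where the propagating-number filtration is by ideals: the $\uPerm$ basis of amalgamations is not a diagram basis, and the two filtrations do not coincide.

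A secondary gap: non-degeneracy of $\langle f,g\rangle = \utr(fg)$ on $\End(\Vec_X)$ does \emph{not} by itself imply that the radical vanishes. One also needs that nilpotent endomorphisms have categorical trace zero, which is not a formality for the categorical trace; it is \cite[Theorem~7.18]{repst}, proved via the higher-trace maps of \cite[\S 7.4]{repst}, and it requires the stabilizer class to be closed under symmetric powers. Your proposal supplies neither this ingredient nor a substitute.

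The paper's route is quite different. It passes to the oligomorphic group $G$ and enlarges the stabilizer class $\sE$ to $\sE^+$, the subgroups $U$ with $G(A)\subset U\subset G[A]$. Hypothesis (SS2) is used in Lemma~\ref{lem:ss-crit-1} to show that a finite-index containment $G(B)\subset G(A)$ forces $A=B$; from this one sees $G[A]$ is the full normalizer of $G(A)$ and that $\sE^+$ has the closure properties the higher-trace machinery needs (Lemma~\ref{lem:ss-crit-2}). Hypothesis (SS1) is used in Lemma~\ref{lem:ss-crit-3}: extending $\mu$ from $\sE$ to $\sE^+$ divides by indices $[V:G(A)]$ dividing $\#\Aut(A)$, and (SS1) ensures the extended measure remains a \emph{regular} $\bZ[1/N]$-valued measure. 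With a regular measure on this larger stabilizer class, semi-simplicity follows from the two cited results of \cite{repst}. So (SS2) is a normalizer-control hypothesis, not an unbounded-fiber-forces-nonsingular-Gram-matrix hypothesis; the latter step, which you identify as your main obstacle, is indeed the missing idea.
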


We follow \cite[\S 18.4]{repst}. Let $\Omega$ be the Fra\"iss\'e limit of $\fF$ and let $G$ be its automorphism group. We regard $\mu$ as a measure for $G$, and prove that the Karoubi envelope of $\uPerm_k(G; \mu)$ is semi-simple; this implies the proposition by the discussion in \S \ref{ss:oligo}. For a finite subset $A$ of $\Omega$, let $G(A)$ be the subgroup of $G$ fixing each element of $A$.

\begin{lemma} \label{lem:ss-crit-1}
Let $A$ and $B$ be finite subsets of $\Omega$ such that $G(B)$ is contained in $G(A)$ with finite index. Then $A=B$.
\end{lemma}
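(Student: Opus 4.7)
The plan is to apply the extension property (SS2) twice, once to force $B \subseteq A$ and once to force $A \subseteq B$. The key preliminary reduction is to set $C = A \cup B$: since $G(B) \subseteq G(A)$, the pointwise stabilizer collapses as
\[
G(C) \;=\; G(A) \cap G(B) \;=\; G(B),
\]
so the hypothesis becomes $[G(A) : G(C)] < \infty$ with $G(C) = G(B)$, and it suffices to show $A = C = B$.

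For $A = C$, I will assume $A \subsetneq C$ and derive a contradiction to the finite index condition. Apply (SS2) with $X = A$, $Y = C$, and $h$ arbitrary, obtaining an embedding $j \colon A \to Z$ in $\fF$ admitting at least $h$ distinct extensions $C \to Z$. Using homogeneity of the Fra\"iss\'e limit $\Omega$, I embed $Z$ into $\Omega$ so as to extend the inclusion $A \hookrightarrow \Omega$; composing with the $h$ extensions gives $h$ distinct embeddings $C \to \Omega$ that restrict to the identity on $A$. A second application of homogeneity lifts these to elements $\tau_1, \dots, \tau_h \in G(A)$ whose restrictions to $C$ are pairwise distinct, so the $\tau_i$ lie in $h$ distinct cosets of $G(C) = G(B)$ inside $G(A)$. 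This forces $[G(A) : G(B)] \geq h$, contradicting finite index once $h$ is large enough.

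For $B = C$, I will assume $B \subsetneq C$ and derive a direct contradiction. Apply (SS2) with $X = B$, $Y = C$, and $h = 2$; the same lifting procedure produces $\sigma_1, \sigma_2 \in G(B)$ whose restrictions to $C$ are distinct embeddings. But $G(B) \subseteq G(A)$ by hypothesis, so each $\sigma_i$ also fixes $A$ pointwise, hence fixes $A \cup B = C$ pointwise, contradicting $\sigma_1|_C \neq \sigma_2|_C$.

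The only non-routine ingredient is (SS2), which supplies enough embeddings in $\fF$ to separate points lying in $C \setminus A$ or $C \setminus B$. Once that is in hand, the argument is just the Galois correspondence for the automorphism group of a Fra\"iss\'e limit (embeddings of finite substructures of $\Omega$ extending a given partial isomorphism lift to elements of $G$, by homogeneity), combined with the collapse $G(A \cup B) = G(B)$ coming from $G(B) \subseteq G(A)$. I expect the main subtlety to be keeping track of which cosets are being counted in which subgroup; everything else is bookkeeping.
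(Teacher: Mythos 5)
Your proof is correct and takes essentially the same route as the paper: both pass to $C = A \cup B$, observe $G(C) = G(B)$, and then invoke (SS2) to contradict finite index. The paper phrases the counting step at the level of $G$-sets (the restriction $\Omega^{[B]} \to \Omega^{[A]}$ is $h$-to-$1$ with $h = [G(A):G(B)]$, while (SS2) produces an embedding of $A$ with more than $h$ extensions), whereas you lift the extensions by homogeneity to explicit elements $\tau_1,\dots,\tau_h \in G(A)$ lying in distinct cosets of $G(B)$; these are dual descriptions of the same count. Your step showing $B = C$ is a nice direct argument (members of $G(B)$ already fix $A$, hence $C$, so (SS2) would force contradictory extensions), and it makes explicit the $A \subseteq B$ direction that the paper leaves to the reader after its reduction ``replacing $B$ with $A \cup B$.'' No gaps; this is a cleaner write-up of the same idea.
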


\begin{proof}
Replacing $B$ with $A \cup B$, we can assume $A \subset B$. We assume this containment is proper and reach a contradiction. Recall that $\Omega^{[A]}$ denotes the set of embeddings $A \to \Omega$, and that this is isomorphic to $G/G(A)$. There is a restriction map $\Omega^{[B]} \to \Omega^{[A]}$, which is $h$-to-1 where $h$ is the (finite) index of $G(B)$ in $G(A)$. This means that an embedding $A \to \Omega$ admits exactly $h$ extensions to $B$, which contradicts (SS2). To be a bit more careful, by (SS2) there is some embedding $A \to Z$ that admits $>h$ extensions to $B$. Since $Z$ embeds into $\Omega$, this gives an embedding $A \to \Omega$ with $>h$ extensions to $B$, a contradiction.
\end{proof}

For a finite subset $A$ of $\Omega$, let $G[A]$ denote the subgroup of $G$ fixing $A$ as a set. The group $G[A]$ normalizes $G(A)$, and $G[A]/G(A)$ is naturally isomorphic to $\Aut(A)$. By the above lemma, $G[A]$ is the full normalizer of $G(A)$: indeed, if $g$ normalizes $G(A)$ then $G(A)=gG(A)g^{-1}=G(gA)$, and so $A=gA$ by the lemma, i.e., $g \in G[A]$. We let $\sE^+$ be the set of all subgroups $U$ of $G$ such that $G(A) \subset U \subset G[A]$ for some finite subset $A$ of $\Omega$.

\begin{lemma} \label{lem:ss-crit-2}
We have the following:
\begin{enumerate}
\item If $V \subset U$ is a finite index containment with $V \in \sE^+$ then $U \in \sE^+$.
\item If $X$ is an $\sE^+$-smooth $G$-set and $H$ is a subgroup of $\Aut(X)$ then $X/H$ is $\sE^+$-smooth.
\item If $X$ is an $\sE^+$-smooth $G$-set then so is $X^{(n)}$ for any $n \ge 1$.
\end{enumerate}
\end{lemma}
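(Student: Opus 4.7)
The plan is to prove each part by producing, for the subgroup $K \le G$ in question, a finite subset $B \subset \Omega$ witnessing $G(B) \subseteq K \subseteq G[B]$. The key tool, established in the discussion just above, is that $G[B]$ is the full $G$-normalizer of $G(B)$; hence whenever $K$ contains some $G(B)$ and normalizes it, we automatically get $K \subseteq G[B]$.

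For (a), I would fix $A$ with $G(A) \subseteq V \subseteq G[A]$ and take $B$ to be the $U$-orbit of $A$ inside the set of finite subsets of $\Omega$. Because $V \subseteq G[A]$ fixes $A$ setwise, the distinct translates $uA$ for $u \in U$ are parametrized by cosets of $V \cap G[A] \supseteq V$ in $U$, of which there are at most $[U:V] < \infty$; so $B$ is a finite union of finite sets, hence finite. By construction $U$ fixes $B$ setwise, giving $U \subseteq G[B]$, and
\begin{displaymath}
G(B) \;=\; \bigcap_{u \in U} G(uA) \;=\; \bigcap_{u \in U} u G(A) u^{-1}
\end{displaymath}
lies in $V \subseteq U$ since $G(A) \subseteq U$ and $U$ is closed under conjugation by its own elements.

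Part (c) I would handle by noting that the setwise stabilizer of $y = \{x_1,\dots,x_n\} \in X^{(n)}$ contains the pointwise stabilizer $W = \bigcap_i G_{x_i}$ with index at most $n!$. Picking finite $A_i$ with $G(A_i) \subseteq G_{x_i} \subseteq G[A_i]$ and setting $B = A_1 \cup \dots \cup A_n$, one directly verifies $G(B) = \bigcap_i G(A_i) \subseteq W$ and $W \subseteq G[B]$, the second inclusion using that any $g \in W$ stabilizes each $A_i$ setwise and hence stabilizes $B$ setwise. This puts $W$ in $\sE^+$, and part (a) then upgrades the finite-index overgroup to an element of $\sE^+$. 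For (b), the map $g \mapsto gx$ identifies cosets of $G_x$ inside the $G$-stabilizer of $Hx \in X/H$ with the intersection $Hx \cap Gx$; so when this intersection is finite, the stabilizer is a finite-index overgroup of $G_x \in \sE^+$, and (a) again finishes the job.

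The main obstacle is part (b): the reduction to (a) requires $|Hx \cap Gx| < \infty$, which is automatic when $H$ is finite---the setting needed for the application to Proposition~\ref{prop:ss-crit}. Parts (a) and (c) are essentially mechanical once the witnessing set $B$ is identified, with the pivotal calculation being the finiteness of the $U$-orbit of $A$ in (a), which relies only on $[U:V] < \infty$ combined with the normalization property $V \subseteq G[A]$ supplied by $V \in \sE^+$.
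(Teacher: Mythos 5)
Your argument is correct and takes a somewhat different route from the paper's. For (a), the paper also works with the $U$-conjugates of $G(A)$ (whose intersection is the $G(B)$ for your union $B = \bigcup_{u\in U}uA$), but then invokes Lemma~\ref{lem:ss-crit-1} to conclude $G(B)=G(A)$, so that $G(A)$ is normal in $U$ and $A$ itself serves as the witness. You instead accept $B$ as a new witness, observing directly that $G(B)\subseteq G(A)\subseteq V\subseteq U\subseteq G[B]$; this sidesteps Lemma~\ref{lem:ss-crit-1} entirely, a clean simplification. For (c), you construct the witness $B=A_1\cup\cdots\cup A_n$ for the pointwise stabilizer $W$ and then apply (a), whereas the paper deduces (c) from (b) via $H=\fS_n$; the two are essentially the same computation packaged differently, and both are fine.

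On (b), your caveat about needing $H$ finite is unnecessary, and the gap you flag can be closed: $|Gx\cap Hx|$ is automatically finite whenever $G_x\in\sE^+$. Indeed, the witnessing set $A$ with $G(A)\subseteq G_x\subseteq G[A]$ is unique (if $B$ also works, then $G(A\cup B)=G(A)\cap G(B)$ has finite index in $G(A)$, so Lemma~\ref{lem:ss-crit-1} forces $A=A\cup B$, and by symmetry $A=B$); consequently any $g\in N_G(G_x)$ sends $A$ to the witness for $gG_xg^{-1}=G_x$, i.e., $gA=A$, so $N_G(G_x)\subseteq G[A]$ and $\Aut(Gx)\cong N_G(G_x)/G_x$ is finite. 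Since $Gx\cap Hx$ is an orbit of the (image in $\Aut(Gx)$ of the) subgroup of $H$ preserving the orbit $Gx$, it is finite. Note the paper's proof of (b) asserts ``$V\subset U$ with finite index'' without justification and so implicitly rests on the same fact; so you are in good company, but the observation is worth making explicit. Do be aware, however, that this fix reintroduces a dependence on Lemma~\ref{lem:ss-crit-1}, so the lemma is not avoidable overall.
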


\begin{proof}
(a) Let $A$ be a finite subset of $\Omega$ with $G(A) \subset V \subset G[A]$. Since $V$ normalizes $G(A)$ and has finite index in $U$, it follows that there are finitely many conjugates of $G(A)$ in $U$, each of which has finite index in $U$. The intersection of these conjugates has the form $G(B)$ and is contained in $G(A)$ with finite index, and is therefore equal to $G(A)$ by Lemma~\ref{lem:ss-crit-1}. Thus $G(A)$ is normal in $U$, and so $U$ is contained in the normalizer $G[A]$ of $G(A)$. Hence $U \in \sE^+$, as required.

(b) It suffices to treat the case where $X$ is transitive, say $X=G/V$ with $V \in \sE^+$. Then $X/H=G/U$, where $V \subset U$ with finite index. Thus $U \in \sE^+$ by (a), and so $X$ is $\sE^+$-smooth.

(c) This follows from (b), as $X^{[n]}$ is $\sE^+$-smooth and $X^{(n)}=X^{[n]}/\fS_n$.
\end{proof}

\begin{lemma} \label{lem:ss-crit-3}
The $\bZ[1/N]$-valued measure $\mu$ for $(G,\sE)$ extends uniquely to a $\bZ[1/N]$-valued measure $\mu^+$ for $(G,\sE^+)$, which is still regular.
\end{lemma}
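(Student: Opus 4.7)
The plan is to \emph{force} the definition of $\mu^+$, verify it is well-defined and regular, then check the measure axioms, with the base-change axiom being the main technical burden.

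\textbf{Step 1 (Uniqueness).} Let $U\in\sE^+$, so there is a finite $A\subset\Omega$ with $G(A)\subset U\subset G[A]$; set $n_U:=[U:G(A)]$. Any extension $\mu^+$ must already take a determined value on the quotient map $\pi\colon G/G(A)\to G/U$: pull $\pi$ back along itself. The fiber product $G/G(A)\times_{G/U}G/G(A)$ is freely acted on by $U/G(A)$ (acting on the right of the second factor), so it decomposes into $n_U$ $G$-orbits, each isomorphic to $G/G(A)$ and mapping identically to $G/G(A)$. Axiom~(c) of Definition~\ref{defn:meas} therefore forces $\mu^+(\pi)=n_U\cdot 1 = n_U$, and then axiom~(b) forces $\mu^+(G/U)=\mu(G/G(A))/n_U$. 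Values on morphisms between transitive sets are determined by values on objects via (b) once regularity is known, so uniqueness of $\mu^+$ reduces to uniqueness on objects.

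\textbf{Step 2 (Definition, well-definedness, regularity).} Define $\mu^+(G/U):=\mu(G/G(A))/n_U$, extended additively across orbit decompositions. Independence of the choice of $A$: if $B\supset A$ then $G(B)\subset G(A)$ with index $[G(A):G(B)]$, and the same self-pullback argument applied at the $\sE$-smooth level (where it is legitimate since $\mu$ is a measure for $(G,\sE)$) gives $\mu(G/G(B))=[G(A):G(B)]\cdot\mu(G/G(A))$. Because $[U:G(B)]=n_U\cdot[G(A):G(B)]$, the two candidate ratios agree. For regularity, note $n_U$ divides $[G[A]:G(A)]=\#\Aut(A)$, whose prime divisors all divide $N$ by (SS1); hence $n_U$ is a unit in $\bZ[1/N]$, and combined with the regularity of $\mu$ this makes $\mu^+(G/U)$ a unit. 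For maps, set $\mu^+(f\colon\cY\to\cX):=\mu^+(\cY)/\mu^+(f(\cY))$ when $\cY$ is transitive; this is well-defined by regularity, and extends additively.

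\textbf{Step 3 (Axioms).} Axiom (a) is immediate and axiom (b) is built into the definition. The real content is axiom (c), and it is the main obstacle. Given $f\colon\cY\to\cX$ and $g\colon\cX'\to\cX$ in the $\sE^+$-smooth world with $\cY,\cX'$ transitive, rearranged using (b), the required identity is
\[
\mu^+(\cY)\cdot\mu^+(\cX') \;=\; \mu^+(\cX)\cdot\mu^+(\cY\times_\cX\cX').
\]
The plan is to choose $A$ so large that the stabilizers of the base points of $\cY=G/V$, $\cX=G/U$, $\cX'=G/V'$ all contain $G(A)$ with finite index (possible because $V,U,V'\in\sE^+$). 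Then $\mu^+$ of each of $\cY,\cX,\cX'$ equals the $\mu$-value of the $\sE$-smooth cover $G/G(A)$ divided by the corresponding index. The fiber product $\tilde\cY\times_{G/G(A)}\tilde\cX'$ (taken at the $\sE$-smooth level, where $\tilde\cY=\tilde\cX'=G/G(A)$) surjects, orbit-by-orbit, onto the $\sE^+$-level fiber product $\cY\times_\cX\cX'$, with fiber sizes equal to products of the indices $[V:G(A)]$ and $[V':G(A)]$ over each orbit. Applying the $\sE$-smooth axiom~(c) (which holds by hypothesis on $\mu$) and tracking how the orbit counts and indices combine, all factors cancel to give the displayed identity. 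The careful bookkeeping of these scalar factors, and verifying that the passage to covers is compatible with the orbit decomposition of the fiber product (which is where Lemma~\ref{lem:ss-crit-2} gets used to keep everything inside $\sE^+$), is the principal technical step of the proof.
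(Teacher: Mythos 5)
Your approach matches the paper's: you force the formula $\mu^+(G/U)=\mu(G/G(A))/[U:G(A)]$ from the measure axioms, observe that (SS1) guarantees the index $[U:G(A)]$ divides $\#\Aut(A)$ and is therefore a unit of $\bZ[1/N]$, and deduce regularity. The paper's proof is noticeably shorter because it offloads the hardest part --- verifying that the formula actually defines a measure on $\sE^+$-smooth sets --- to a citation, \cite[\S 4.2(d)]{repst}, which asserts a general extension principle for measures along a finite-index enlargement of the stabilizer class. Your Step~3 is an outline of the argument that the cited passage would supply (passing to $G(A)$-covers, applying the $\sE$-level axiom, and cancelling index factors), but you explicitly leave the bookkeeping undone, so as written it is a sketch rather than a proof; in particular you would still need to verify that $\sE^+$-smooth sets are closed under the relevant fiber products, which is where Lemma~\ref{lem:ss-crit-2}(a) enters (since $V\cap V'$ contains some $G(C)$ with finite index and hence lies in $\sE^+$). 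So: same route, with the paper replacing your Step~3 by a reference, and your Step~3 correctly identifying what must be checked without checking it.
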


\begin{proof}
Let $V \in \sE^+$. By definition, there is a finite subset $A$ of $\Omega$ such that $G(A) \subset V \subset G[A]$. Since these containments are finite index, we must have
\begin{displaymath}
\mu^+(G/V) = [V:G(A)]^{-1} \cdot \mu(G/G(A)).
\end{displaymath}
by the same reasoning as in \cite[Corollary~4.5]{repst}, assuming $\mu^+$ is a measure. In fact, if we define $\mu^+$ by the above formula, then $\mu^+$ is a well-defined $\bQ$-valued measure, as asserted by \cite[\S 4.2(d)]{repst}. By (SS1), the prime factors of $\# \Aut(A) = [G[A]:G(A)]$ divide $N$, and so the same is true for the prime factors of $[V:G(A)]$. Thus $\mu^+(G/V)$ is a unit of the ring $\bZ[1/N]$, and so $\mu^+$ is regular.
\end{proof}

\begin{proof}[Proof of Proposition~\ref{prop:ss-crit}]
By \cite[Theorem~7.18]{repst}, nilpotent endomorphisms in the category $\uPerm_k(G, \sE^+; \mu^+)$ have trace~0. This theorem is not formulated in the relative case, but it holds there assuming the stabilizer class satisfies condition $(\ast)$ of \cite[Remark~5.5]{repst}, which $\sE^+$ does by Lemma~\ref{lem:ss-crit-2}(c). The key point here is that for this theorem we need to make use of the higher traces introduced in \cite[\S 7.4]{repst}, which are defined provided $(\ast)$ holds. See \cite[\S 7.8]{repst} for further discussion.

Since $\mu^+$ is regular, \cite[Proposition~7.23]{repst} shows that the trace pairing on endomorphism algebras in $\uPerm_k(G, \sE^+; \mu^+)$ is non-degenerate. Combined with the previous paragraph, this implies that endomorphism algebras are semi-simple; see \cite[Proposition~7.24]{repst}. Since $\uPerm_k(G, \sE; \mu)$ is a full subcategory of $\uPerm_k(G, \sE^+; \mu^+)$, its endomorphism algebras are semi-simple as well. Thus its Karoubi envelope is semi-simple (see \cite[Proposition~2.4]{dblexp}), and so clearly pre-Tannakian as well.
\end{proof}

\subsection{Specialization} \label{ss:special}

Let $\fF$ be a Fra\"iss\'e class and let $\mu$ be a measure for $\fF$ valued in a domain $k$. Let $\fF'$ be the subclass consisting of structures with non-zero measure. As we have seen (Proposition~\ref{prop:induced}), $\fF'$ is itself a Fra\"iss\'e class and $\mu$ restricts to a nowhere zero measure $\mu'$ on $\fF'$. The following proposition connects the tensor categories associated to these two Fra\"iss\'e classes.

\begin{proposition} \label{prop:special}
There is a tensor functor
\begin{displaymath}
\Phi \colon \uPerm_k(\fF; \mu) \to \uPerm_k(\fF'; \mu')
\end{displaymath}
satisfying
\begin{displaymath}
\Phi(\Vec_X) = \begin{cases}
\Vec_X & \text{if $X \in \fF'$} \\
0 & \text{otherwise} \end{cases}
\qquad
\Phi(\phi_Z) = \begin{cases}
\phi_Z & \text{if $Z \in \fF'$} \\
0 & \text{otherwise} \end{cases}
\end{displaymath}
The functor $\Phi$ is essentially surjective and full.
\end{proposition}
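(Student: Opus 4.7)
The plan is to define $\Phi$ on the underlying pre-additive category $\uPerm^0_k(\fF; \mu)$ by the formulas in the statement, extend $k$-linearly on morphisms, and then additively. Essential surjectivity is then immediate, since every $\Vec_X$ with $X \in \fF'$ is already an object of $\uPerm^0_k(\fF; \mu)$ via the inclusion $\fF' \subseteq \fF$, and $\Phi$ fixes it. Fullness is equally transparent: the map
\[
\Hom(\Vec_X, \Vec_Y) = k[\Amalg_\fF(X,Y)] \longrightarrow k[\Amalg_{\fF'}(X,Y)]
\]
is the projection sending each $\phi_Z$ with $Z \in \fF'$ to itself and each $\phi_Z$ with $Z \notin \fF'$ to $0$, which is surjective. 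The substance of the proposition is that $\Phi$ is a well-defined functor compatible with the tensor product.

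\textbf{Key vanishing lemma.} The single observation driving everything is this: if $i \colon Y \to Z$ is an embedding in $\fF$ with $Y \in \fF'$ but $Z \notin \fF'$, then $\mu(i) = 0$. Indeed, $\mu(Z) = \mu(Y) \cdot \mu(i)$, $\mu(Y)$ is nonzero, and $k$ is a domain. Reading this identity in the other direction also shows that substructures of $\fF'$-objects lie in $\fF'$, so for any $\phi_{Y_j}$ whose total amalgamation $Y_j$ lies in $\fF'$, all of its ``sources'' and ``targets'' $X_i$ automatically lie in $\fF'$ as well.

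\textbf{Functoriality and tensor structure.} To check that $\Phi(\phi_{Y_2} \circ \phi_{Y_1}) = \Phi(\phi_{Y_2}) \circ \Phi(\phi_{Y_1})$ for morphisms $\phi_{Y_1} \colon \Vec_{X_1} \to \Vec_{X_2}$ and $\phi_{Y_2} \colon \Vec_{X_2} \to \Vec_{X_3}$, split into cases. If either $Y_j \notin \fF'$, then the right hand side vanishes; on the left, every triple amalgamation $Z$ contains $Y_1$ and $Y_2$ as substructures, so $Z \notin \fF'$, and the coefficient $\mu(Y_3 \to Z)$ then vanishes for every $Y_3 \in \fF'$ by the vanishing lemma, forcing the left side to vanish as well. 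If both $Y_1, Y_2 \in \fF'$, then so are $X_1, X_2, X_3$, and the defining sum
\[
\phi_{Y_2} \circ \phi_{Y_1} = \sum_{Y_3} \sum_Z \mu(Y_3 \to Z) \cdot \phi_{Y_3}
\]
decomposes after applying $\Phi$ into terms with $Y_3 \in \fF'$; for each such $Y_3$ the vanishing lemma further restricts the inner sum to $Z \in \fF'$. Since amalgamations in $\fF'$ are exactly those in $\fF$ that happen to lie in $\fF'$ and $\mu'$ restricts $\mu$, the surviving terms match the composition formula in $\uPerm^0_k(\fF'; \mu')$ exactly. The tensor identity $\Phi(\Vec_X \uotimes \Vec_Y) = \Phi(\Vec_X) \uotimes \Phi(\Vec_Y)$ follows from the analogous bookkeeping on the decomposition $\Vec_X \uotimes \Vec_Y = \bigoplus_{Z \in \Amalg_\fF(X,Y)} \Vec_Z$, and compatibility on morphisms of the tensor product reduces to the same case analysis. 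The main obstacle throughout is exactly this functoriality verification in the mixed regime where some but not all structures lie in $\fF'$; the vanishing lemma is what makes all such cross terms collapse to zero on both sides.
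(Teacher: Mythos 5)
Your proposal is correct and follows essentially the same route as the paper: the single load-bearing observation in both is that $\mu(Y_3 \to Z) = \mu(Z)\mu(Y_3)^{-1} = 0$ whenever $Y_3 \in \fF'$ but $Z \notin \fF'$, which is exactly your vanishing lemma. You make the case split (some $Y_j \notin \fF'$ versus both in $\fF'$) a bit more explicit than the paper, which handles it uniformly by showing the defining sum restricts to $Y_3, Z \in \fF'$; this is a presentational difference only.
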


\begin{proof}
Ignoring the tensor structure, we have specified $\Phi$ on objects and morphisms. We first verify that this yields a well-defined functor. Note that if $X \to Y$ is an embedding and $Y \in \fF'$ then $X \in \fF'$. We thus see that if $\phi_Z \colon \Vec_X \to \Vec_Y$ is a morphism and $Z \in \fF'$ then both $X$ and $Y$ are in $\fF'$, as they embed into $Z$, and so $\phi_Z=\Phi(\phi_Z)$ is indeed a morphism from $\Phi(\Vec_X)=\Vec_X$ to $\Phi(\Vec_Y)$. Of course, if $Z \not\in \fF'$ then $\Phi(\phi_Z)=0$ is a morphism between $\Phi(\Vec_X)$ and $\Phi(\Vec_Y)$ whatever these objects are. Thus $\Phi$ induces a map on $\Hom$ spaces in the requisite manner. It is also clear that $\Phi$ preserves identity morphisms.

We now verify that $\Phi$ is compatible with composition. Suppose we have morphisms
\begin{displaymath}
\phi_{Y_1} \colon \Vec_{X_1} \to \Vec_{X_2}, \qquad \phi_{Y_2} \colon \Vec_{X_2} \to \Vec_{X_3}
\end{displaymath}
in $\uPerm_k(\fF; \mu)$. Recall that
\begin{displaymath}
\phi_{Y_2} \circ \phi_{Y_1} = \sum_{Y_3} \sum_Z \mu(Y_3 \to Z) \cdot \phi_{Y_3},
\end{displaymath}
where $Y_3$ varies over all amalgamations (in $\fF$) of $X_1$ and $X_3$, and $Z$ varies over all amalgamations (in $\fF$) of $X_1$, $X_2$, and $X_3$ extending $Y_1$, $Y_2$, and $Y_3$. We claim that $\Phi(\phi_{Y_2} \circ \phi_{Y_1})$ is given by the same formula, except with $Y_3$ and $Z$ varying over amalgamations in $\fF'$. Indeed, in the original sum, any $\phi_{Y_3}$ term with $Y_3 \not \in \fF'$ is killed by $\Phi$; thus we can restrict to $Y_3 \in \fF'$. Next, observe that if $Y_3 \in \fF'$ and $Z \not\in \fF'$ then $\mu(Y_3 \to Z)=\mu(Z) \mu(Y_3)^{-1}=0$. Thus only those $Z \in \fF'$ have non-zero contribution in the sum, which is why we can restrict to $Z \in \fF'$. This establishes the claim. We thus see that $\Phi(\phi_{Y_2} \circ \phi_{Y_1})$ is exactly the composition of $\phi_{Y_2} \circ \phi_{Y_1}$ as computed in $\uPerm_k(\fF'; \mu)$. This shows that $\Phi$ is a well-defined functor.

We have a natural isomorphism
\begin{displaymath}
\Phi(\Vec_X \uotimes \Vec_Y) = \Phi(\Vec_X) \uotimes \Phi(\Vec_Y).
\end{displaymath}
Indeed, if $X$ and $Y$ both belong to $\fF'$ then this follows immediately from the definition, and otherwise both sides vanish (note that in this case all amalgamations do not belong to $\fF'$). We omit the rest of the verification that $\Phi$ is a tensor functor.

Finally, it is clear that $\Phi$ is essentially surjective. If $\phi_Z \colon \Vec_X \to \Vec_Y$ is a morphism in $\uPerm_k(\fF'; \mu)$ then it is also a morphism in $\uPerm_k(\fF; \mu)$, and $\Phi(\phi_Z)=\phi_Z$. This shows that $\Phi$ is full.
\end{proof}

Let $k$ be a field, and let $\cC$ be a $k$-linear tensor category. Under certain conditions, one can define the ideal of negligible morphisms in $\cC$ and prove that the quotient category is semi-simple; this category is called the \defn{semi-simplification} of $\cC$. See \cite[\S 2.3]{EtingofOstrik} for details.

\begin{corollary} \label{cor:special}
Suppose that $k$ is a field and $\uPerm_k(\fF'; \mu')^{\rm kar}$ is semi-simple. Then nilpotent endomorphisms in $\uPerm_k(\fF; \mu)^{\rm kar}$ have categorical trace~0, and the semi-simplification of $\uPerm_k(\fF; \mu)^{\rm kar}$ is $\uPerm_k(\fF'; \mu')^{\rm kar}$.
\end{corollary}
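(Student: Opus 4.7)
The plan is to transfer properties across the tensor functor $\Phi \colon \uPerm_k(\fF;\mu) \to \uPerm_k(\fF';\mu')$ of Proposition~\ref{prop:special}, extended to Karoubi envelopes. Write $\cC = \uPerm_k(\fF;\mu)^{\rm kar}$ and $\cC' = \uPerm_k(\fF';\mu')^{\rm kar}$, and let $\Phi^{\rm kar} \colon \cC \to \cC'$ denote the induced tensor functor. A short verification shows $\Phi^{\rm kar}$ is still full: given a morphism $h \colon (\Phi(X),\Phi(e)) \to (\Phi(Y),\Phi(f))$ in $\cC'$, use fullness of $\Phi$ to lift $h$ to some $\tilde g \colon X \to Y$ in $\uPerm_k(\fF;\mu)$, and then replace $\tilde g$ by $f \tilde g e$, which is a morphism $(X,e) \to (Y,f)$ in $\cC$ mapping to $h$.

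Since $k$ is a field and $\mu'$ is nowhere zero, Proposition~\ref{prop:induced} makes $\mu'$ a regular measure. Combined with the semi-simplicity of $\cC'$, the argument of \cite[Propositions~7.23 and~7.24]{repst} shows that the trace pairing on every $\Hom$ space in $\cC'$ is non-degenerate, so $\cC'$ has no non-zero negligible morphisms. Since $\Phi^{\rm kar}$ is a tensor functor between rigid categories, it preserves categorical trace. Thus for any nilpotent $f \in \End_\cC(X)$, the endomorphism $\Phi^{\rm kar}(f)$ is nilpotent in the semi-simple category $\cC'$, where nilpotent endomorphisms always have trace zero (decompose the object into isotypic components and invoke cyclicity on each matrix block). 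Hence $\tr_\cC(f) = \tr_{\cC'}(\Phi^{\rm kar}(f)) = 0$, which proves the first assertion and licenses the formation of the semi-simplification $\cC^{\rm ss}$ in the sense of \cite[\S2.3]{EtingofOstrik}.

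For the identification of $\cC^{\rm ss}$ with $\cC'$, trace preservation yields a two-sided characterization: $g \in \Hom_\cC(X,Y)$ is negligible if and only if $\Phi^{\rm kar}(g) = 0$. One direction is because $\cC'$ has no non-zero negligibles; the other is because $\tr_\cC(hg) = \tr_{\cC'}(\Phi^{\rm kar}(h)\Phi^{\rm kar}(g))$ vanishes for every $h$ whenever $\Phi^{\rm kar}(g) = 0$. Thus $\Phi^{\rm kar}$ descends to a tensor functor $\bar\Phi \colon \cC^{\rm ss} \to \cC'$ that is full (inherited from $\Phi^{\rm kar}$) and faithful (by the negligibility characterization). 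Every object of $\cC'$ has the form $(Y,e)$ with $Y = \Phi(Y_0)$ for some $Y_0$ in $\uPerm_k(\fF;\mu)$, since $\fF' \subset \fF$; because $\bar\Phi$ induces an algebra isomorphism $\End_{\cC^{\rm ss}}(Y_0) \to \End_{\cC'}(Y)$, the idempotent $e$ lifts uniquely to an idempotent $\tilde e$ in $\End_{\cC^{\rm ss}}(Y_0)$, and the object $(Y_0,\tilde e)$ of the pseudo-abelian category $\cC^{\rm ss}$ maps to $(Y,e)$, giving essential surjectivity.

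The main obstacle is this last step: $\Phi^{\rm kar}$ itself need not be essentially surjective onto $\cC'$, as there is no reason for idempotents in $\End_{\cC'}(\Phi(Y_0))$ to lift to idempotents in $\End_\cC(Y_0)$. The key trick is that the lifting is only required in the quotient $\cC^{\rm ss}$, where the fully faithfulness of $\bar\Phi$ furnishes it automatically from the isomorphism on endomorphism algebras.
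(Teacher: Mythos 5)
Your proof is correct and follows the same basic strategy as the paper's: push nilpotency through the full tensor functor $\Phi$ of Proposition~\ref{prop:special} and use trace-preservation, then identify the semi-simplification via the characterization of negligibles as $\ker\Phi$. The one place where you genuinely improve on the paper's exposition is essential surjectivity. The paper's proof writes $\cC = \uPerm_k(\fF;\mu)^{\rm kar}$, $\cD = \uPerm_k(\fF';\mu')^{\rm kar}$ and then asserts that ``$\Phi$ is full and essentially surjective,'' but Proposition~\ref{prop:special} only establishes essential surjectivity for the functor between the additive (non-Karoubian) categories; it does not show that idempotents in $\End_{\cD}(\Phi(Y_0))$ lift to $\End_{\cC}(Y_0)$, and in the singular case $\cC$ is not semi-simple, so there is no obvious reason for such lifting. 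You correctly observe that this lifting is only required at the level of $\cC^{\rm ss}$, where $\bar\Phi$ gives an isomorphism of endomorphism algebras; combined with the fact that $\cC^{\rm ss}$ is Karoubian (once nilpotents-have-trace-zero is in hand, semi-simplicity of $\cC^{\rm ss}$ and hence Karoubian-ness follows from \cite{EtingofOstrik}), this furnishes the needed idempotent. This is a real gap-filling step and worth having on record. The remaining pieces --- regularity of $\mu'$ giving non-degeneracy of the trace pairing so that $\cD$ has no nonzero negligibles, and the two-sided characterization ``$g$ negligible $\iff \Phi^{\rm kar}(g)=0$'' via fullness --- match the intended argument and are correctly executed.
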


\begin{proof}
Put $\cC=\uPerm_k(\fF; \mu)^{\rm kar}$ and $\cD=\uPerm_k(\fF'; \mu')^{\rm kar}$. By Proposition~\ref{prop:special} there is a tensor functor $\Phi \colon \cC \to \cD$. Since the $\cD$ is semi-simple, any nilpotent endomorphism in it has categorical trace~0. It follows that the same is true for $\cC$. Indeed, if $\phi$ is a nilpotent endomorphism in $\cC$ then $\Phi(\phi)$ is a nilpotent endomorphism in $\cD$, and thus of trace~0; since $\Phi$ is a tensor functor, it preserves traces, and so $\phi$ has trace~0. We thus see that the semi-simplification of $\cC$ is defined. Since $\Phi$ is full and essentially surjective, it is necessarily the case that $\Phi$ is the semi-simplification of $\cC$.
\end{proof}

\section{Tensor categories associated to trees} \label{s:arboreal}

\subsection{Overview}

In this section, we construct the arboreal tensor categories $\cC_k(t)$ and $\cD_k(n)$ and prove our main theorems about them. The categories are defined in \S \ref{ss:treecat}. In \S \ref{ss:finite}, we prove our main result about the finite level category $\cD_k(n)$. In \S \ref{ss:singular} and \S \ref{ss:nonsing}, we prove our main results about the infinite level category $\cC_k(t)$ in the case of singular and non-singular parameters, respectively. Finally, in \S \ref{ss:further}, we make some additional remarks about arboreal tensor categories.

\subsection{Arboreal categories and algebras} \label{ss:treecat}

Recall (\S \ref{ss:goodmeas}) that if $t$ is an element of a ring $k$ such that $t-1$ is a unit then we have a measure $\mu_t$ for $\fT$ valued in $k$. Similarly (\S \ref{ss:finite-meas}), if $n \ge 3$ is an integer such that $n-1$ is a unit in $k$ then we have a measure $\mu'_n$ for $\fT_n$ valued in $k$. We define
\begin{displaymath}
\cC_k(t) = \uPerm_k(\fT; \mu_t)^{\rm kar}, \qquad
\cD_k(n) = \uPerm_k(\fT_n; \mu'_n)^{\rm kar},
\end{displaymath}
where the superscript kar denotes the Karoubi envelope. These rigid tensor categories, which we call \defn{arboreal tensor categories}, are the main objects of study in this section.

Let $\bT=(T_1, \ldots, T_r)$ be a tuple of trees. We let $\Vec_{\bT}$ be the direct sum $\Vec_{T_1} \oplus \cdots \oplus \Vec_{T_r}$, which we regard as an object of $\cC_k(t)$ or $\cD_k(n)$ (in the latter case, we simply omit summands of level $>n$). We define the \defn{arboreal algebras} $A^{\bT}_k(t)$ and $B^{\bT}_k(n)$ to be the endomorphism algebras of $\Vec_{\bT}$ in $\cC_k(t)$ and $\cD_k(n)$. The algebra $A^{\bT}_k(n)$ is similar in spirit to Brauer algebras, partition algebras, and other such diagram algebras.

\begin{example} \label{ex:arboreal}
We make a few comments to illustrate the nature of arboreal algebras. Let $T$ be the following tree
\begin{displaymath}
\tikz{
\node[boron] (A) at (0,0) {};
\node[boron] (B) at (0.5,0) {};
\node[leaf,label={\tiny 1}] (x) at (-0.5,0) {};
\node[leaf,label={\tiny 4}] (y) at (1,0) {};
\node[leaf,label={\tiny 2}] (a) at (0,0.5) {};
\node[leaf,label={\tiny 3}] (b) at (0.5,0.5) {};
\path[draw] (A)--(B);
\path[draw] (A)--(x);
\path[draw] (B)--(y);
\path[draw] (A)--(a);
\path[draw] (B)--(b);
}
\end{displaymath}
We consider the algebra $A^T_k(t)$. This algebra has a basis consisting of the self-amalgamations of $T$. To work with such amalgamations, it is convenient to introduce a copy $T'$ of $T$ with vertices labeled $1', \ldots, 4'$. A self-amalgamation is then a tree with vertices labeled by $1, 1', \ldots, 4, 4'$ such that the subtree on $1, \ldots, 4$ is $T$, and the subtree on $1', \dots, 4'$ is $T'$. Note that a single vertex can have two labels (one primed, one not). The following trees are examples such amalgamations
\begin{displaymath}
\tikz[baseline=0pt]{
\node[boron] (A) at (0,0) {};
\node[boron] (B) at (0.5,0) {};
\node[boron] (C) at (1,0) {};
\node[boron] (D) at (1.5,0) {};
\node[boron] (E) at (2,0) {};
\node[boron] (F) at (2.5,0) {};
\node[leaf,label={\tiny 1}] (a1) at (-0.5,0) {};
\node[leaf,label={\tiny 1'}] (a2) at (0,.5) {};
\node[leaf,label={\tiny 2}] (b) at (0.5,0.5) {};
\node[leaf,label={\tiny 2'}] (c) at (1,0.5) {};
\node[leaf,label={\tiny 3}] (d) at (1.5,0.5) {};
\node[leaf,label={\tiny 3'}] (e) at (2,0.5) {};
\node[leaf,label={\tiny 4}] (f1) at (2.5,0.5) {};
\node[leaf,label={\tiny 4'}] (f2) at (3,0) {};
\path[draw] (A)--(B);
\path[draw] (B)--(C);
\path[draw] (C)--(D);
\path[draw] (D)--(E);
\path[draw] (E)--(F);
\path[draw] (A)--(a1);
\path[draw] (A)--(a2);
\path[draw] (B)--(b);
\path[draw] (C)--(c);
\path[draw] (D)--(d);
\path[draw] (E)--(e);
\path[draw] (F)--(f1);
\path[draw] (F)--(f2);
}
\qquad\qquad
\tikz[baseline=0pt]{
\node[boron] (A) at (0,0) {};
\node[boron] (B) at (1,0) {};
\node[boron] (C) at (1, .5) {};
\node[boron] (D) at (2, 0) {};
\node[leaf,label={\tiny 1}] (x) at (-1,0) {};
\node[leaf,label={\tiny 1'/2}] (a) at (0,0.5) {};
\node[leaf,label={\tiny 2'}] (c1) at (0.75,0.75) {};
\node[leaf,label={\tiny 3}] (c2) at (1.25,.75) {};
\node[leaf,label={\tiny 3'}] (d) at (2,.5) {};
\node[leaf,label={\tiny 4/4'}] (y) at (3,0) {};
\path[draw] (A)--(B);
\path[draw] (B)--(C);
\path[draw] (B)--(D);
\path[draw] (A)--(x);
\path[draw] (A)--(a);
\path[draw] (C)--(c1);
\path[draw] (C)--(c2);
\path[draw] (D)--(d);
\path[draw] (D)--(y);
}
\end{displaymath}
Call the above two trees $X$ and $Y$. Let us explain how to compute their product in $A^T_k(t)$. First, let $Y'$ be a copy of $Y$ where $1, \ldots, 4$ are renamed $1', \ldots, 4'$, and $1', \ldots, 4'$ are renamed $1'', \ldots, 4''$. Let $\Sigma$ be the set of trees with labels $1, 1', 1'', \ldots, 4, 4', 4''$ such that the subtree on $1, 1', \ldots, 4, 4'$ is $X$ and the subtree on $1', 1'', \ldots, 4', 4''$ is $Y'$. For $Z \in \Sigma$, let $\ol{Z}$ be the subtree on $1, 1'', \ldots, 4, 4''$, with double primes converted to single primes. Then
\begin{displaymath}
X \cdot Y = \sum_Z \mu(\ol{Z} \to Z) \cdot \ol{Z},
\end{displaymath}
where here the inclusion of $\ol{Z}$ into $Z$ maps the single prime vertices of $\ol{Z}$ back to the corresponding double prime vertices. For example, the following tree $Z$ is an element of $\Sigma$
\begin{displaymath}
\tikz[baseline=0pt]{
\node[boron] (A) at (0,0) {};
\node[boron] (B) at (1,0) {};
\node[boron] (C) at (2,0) {};
\node[boron] (D) at (3,0) {};
\node[boron] (E) at (4,0) {};
\node[boron] (F) at (4,.5) {};
\node[boron] (G) at (5,0) {};
\node[boron] (H) at (5,.5) {};
\node[leaf,label={\tiny 1}] (a1) at (-1,0) {};
\node[leaf,label={\tiny 1'}] (a2) at (0,0.5) {};
\node[leaf,label={\tiny 2}] (b) at (1,.5) {};
\node[leaf,label={\tiny 2'/1''}] (c) at (2,.5) {};
\node[leaf,label={\tiny 3}] (d) at (3,.5) {};
\node[leaf,label={\tiny 2''}] (f1) at (3.75,.75) {};
\node[leaf,label={\tiny 3'}] (f2) at (4.25,.75) {};
\node[leaf,label={\tiny 3''}] (h1) at (4.75,.75) {};
\node[leaf,label={\tiny 4}] (h2) at (5.25,.75) {};
\node[leaf,label={\tiny 4'/4''}] (g) at (6,0) {};
\path[draw] (A)--(B);
\path[draw] (B)--(C);
\path[draw] (B)--(D);
\path[draw] (D)--(E);
\path[draw] (E)--(F);
\path[draw] (E)--(G);
\path[draw] (G)--(H);
\path[draw] (A)--(a1);
\path[draw] (A)--(a2);
\path[draw] (B)--(b);
\path[draw] (C)--(c);
\path[draw] (D)--(d);
\path[draw] (F)--(f1);
\path[draw] (F)--(f2);
\path[draw] (H)--(h1);
\path[draw] (H)--(h2);
\path[draw] (G)--(g);
}
\end{displaymath}
The tree $\ol{Z}$ in this case is
\begin{displaymath}
\tikz[baseline=0pt]{
\node[boron] (A) at (0,0) {};
\node[boron] (B) at (1,0) {};
\node[boron] (C) at (2,0) {};
\node[boron] (D) at (3,0) {};
\node[boron] (E) at (4,0) {};
\node[boron] (F) at (4,.5) {};
\node[leaf,label={\tiny 1}] (a1) at (-1,0) {};
\node[leaf,label={\tiny 2}] (a2) at (0,0.5) {};
\node[leaf,label={\tiny 1'}] (b) at (1,0.5) {};
\node[leaf,label={\tiny 3}] (c) at (2,.5) {};
\node[leaf,label={\tiny 2'}] (d) at (3,.5) {};
\node[leaf,label={\tiny 3'}] (f1) at (3.75,.75) {};
\node[leaf,label={\tiny 4}] (f2) at (4.25,.75) {};
\node[leaf,label={\tiny 4'}] (e) at (5,0) {};
\path[draw] (A)--(B);
\path[draw] (B)--(C);
\path[draw] (C)--(D);
\path[draw] (D)--(E);
\path[draw] (E)--(F);
\path[draw] (A)--(a1);
\path[draw] (A)--(a2);
\path[draw] (B)--(b);
\path[draw] (C)--(c);
\path[draw] (D)--(d);
\path[draw] (E)--(e);
\path[draw] (F)--(f1);
\path[draw] (F)--(f2);
}
\end{displaymath}
We have
\begin{displaymath}
\mu(Z) = \frac{t (t-2)^8}{(t-1)^{10}}, \qquad
\mu(\ol{Z}) = \frac{t (t-2)^6}{(t-1)^8}
\end{displaymath}
and so
\begin{displaymath}
\mu(\ol{Z} \to Z) = \frac{\mu(Z)}{\mu(\ol{Z})} = \frac{(t-2)^2}{(t-1)^2}.
\end{displaymath}
We thus see that the contribution of $Z$ to the product is the above scalar times the basis element $\ol{Z}$. To fully compute $X \cdot Y$, we would have to list all elements of $\Sigma$ and carry out the above computation for each one. This can get quite complicated even for small examples.
\end{example}

\subsection{Finite level} \label{ss:finite}

The following is our main result about the $\cD_k(n)$ categories. We note that the case $n=3$ is \cite[Theorem~18.10]{repst}.

\begin{theorem} \label{thm:finite}
Let $n \ge 3$ and let $k$ be a field in which $n!$ is invertible. Then $\cD_k(n)$ is a semi-simple pre-Tannakian category.
\end{theorem}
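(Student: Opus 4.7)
The plan is to apply the semi-simplicity criterion of Proposition~\ref{prop:ss-crit} to $\fF = \fT_n$, $\mu = \mu'_n$, and $N = n!$. By \S\ref{ss:finite-meas}, $\mu'_n$ is regular as a $\bZ[1/n!]$-valued measure on $\fT_n$, and composing with $\bZ[1/n!] \to k$ (which is well-defined because $n!$ is invertible in $k$) yields a regular $k$-valued measure. It therefore remains to verify hypotheses (SS1) and (SS2).

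For (SS1), I would show that every prime divisor of $\#\Aut(T)$ divides $n!$ for any $T \in \fT_n$. Any automorphism of $T$ preserves the reduced tree, fixes a central vertex or edge, and acts at each node by permuting isomorphic incident subtrees. Since every node has valence $\le n$, these local permutation groups are subgroups of $\fS_k$ with $k \le n$; assembling $\Aut(T)$ from these via iterated wreath products shows that $\#\Aut(T)$ divides a product of factorials $k!$ with $k \le n$, so every prime factor of $\#\Aut(T)$ is $\le n$ and thus divides $n!$.

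For (SS2), I would first reduce to the case of one-point extensions $X \subset Y = X \cup \{y\}$ (the general case following by induction together with the amalgamation property of $\fT_n$). Let $v = N(y)$ be $y$'s attachment node in the reduced tree of $Y$, and consider the type of the marked tree $(Y, y)$ from \S\ref{ss:mintree}. In each case, I would build $Z \in \fT_n$ by introducing a chain of auxiliary valence-$3$ nodes carrying $h$ new leaves $z_1, \ldots, z_h$: in type I${}_m$ (so $v$ is already present in $X$ with valence $m - 1 \ge 3$), I attach the chain to $v$, raising $v$'s valence in $Z$ from $m - 1$ to $m \le n$; in types II and III (where $v$ has valence $3$ in $Y$ and contracts away in $X$), I subdivide the edge of $X$ at which $v$ was contracted, threading the chain through that edge. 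In both cases, each chain node has valence $3 \le n$, so the level bound is preserved; and when one restricts to the subtree on $X \cup \{z_i\}$, the remaining chain nodes drop to degree $2$ and contract, so the reduced subtree on $X \cup \{z_i\}$ is isomorphic to $Y$. The $h$ choices of $z_i$ then yield $h$ distinct extensions of $j \colon X \to Z$ to $Y \to Z$.

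The main technical obstacle is (SS2), especially the saturated case $m = n$ of type I${}_m$, where $v$ already has the maximum allowed valence in $Y$, so only a single free direction at $v$ is available in $Z$. The entire chain of auxiliary nodes must be routed through that one direction, and the argument rests on the fact that each chain node has valence exactly $3 \le n$ and is contracted in any subtree omitting its attached leaf. Consequently arbitrarily long chains, and hence arbitrarily many valid extensions, remain within $\fT_n$, giving (SS2) in full generality.
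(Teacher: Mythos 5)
Your proposal is correct and follows essentially the same route as the paper: apply Proposition~\ref{prop:ss-crit} with $N=n!$, verify (SS1) via the P\'olya/wreath-product structure of $\Aut(T)$, and verify (SS2) by attaching a chain of valence-$3$ nodes carrying $h$ new leaves, so that deleting all but one of them makes the intermediate chain nodes contract away. The core construction for (SS2) is identical to the paper's. Two small remarks. First, the reduction of (SS2) to one-point extensions via ``induction together with the amalgamation property'' is vaguer than necessary and somewhat misleading: the reduction is simple monotonicity. Given arbitrary $X \subsetneq Y$, pick a leaf $a \in Y \setminus X$ and apply the one-point case to $X' = Y \setminus \{a\} \subset Y$; the resulting $h$ extensions $Y \to Z$ all restrict to the same map on $X$, giving $h$ extensions of $X \to Z$. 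No induction or amalgamation is needed, and in fact the paper forgoes the reduction entirely and runs the chain construction directly for arbitrary $X \subsetneq Y$. Second, your case analysis by type silently assumes $Y$ has $\ge 4$ leaves (types II and III are only defined then, and type I$_m$ with $m\le 3$ behaves differently from the $m\ge 4$ case you describe); the small cases where $Y$ has one, two, or three leaves need separate (easy) treatment, just as the paper acknowledges that its picture is ``inaccurate'' there. Neither point is a genuine gap, but both could be stated more carefully.
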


To prove the theorem, we apply Proposition~\ref{prop:ss-crit} with $N=n!$. We must verify the two conditions (SS1) and (SS2), which we do in the next two lemmas.

\begin{lemma}
If $T$ is a tree of level $\le n$ then every prime divisor of $\# \Aut(T)$ is $\le n$. In particular, $\fT_n$ satisfies (SS1) with $N=n!$.
\end{lemma}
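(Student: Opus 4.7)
The plan is to exploit the classical fact that any finite tree has a canonical center (either a central vertex or a central edge), and then to realize $\Aut(T)$ as built out of symmetric groups of sizes bounded by the maximum valence. Since $T$ is a tree structure in the sense of \S\ref{ss:trees}, an automorphism is exactly an automorphism of the associated reduced tree, so we may work directly with that combinatorial object.

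First I would choose a canonical fixed point. Every finite tree $T$ has either a unique \emph{central vertex} $v_0$ or a unique \emph{central edge} $e_0$, obtained by repeatedly pruning leaves; since this construction is purely combinatorial, any $\sigma \in \Aut(T)$ either fixes $v_0$, or stabilizes $\{u,w\}$ where $e_0 = uw$. In the central-vertex case, root $T$ at $v_0$. In the central-edge case, let $\Aut^+(T)$ be the index-at-most-two subgroup of automorphisms that fix each endpoint of $e_0$; then $|\Aut(T)|$ divides $2 \cdot |\Aut^+(T)|$, and $\Aut^+(T)$ acts on the two rooted subtrees obtained by deleting $e_0$.

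Next I would analyze the automorphism group of a rooted tree recursively. For any rooted tree $(S,r)$, grouping the children of $r$ into isomorphism classes $C_1, \ldots, C_s$ of rooted subtrees (with multiplicities $m_1, \ldots, m_s$), one obtains the standard wreath product description
\[
\Aut(S,r) \;\cong\; \prod_{i=1}^{s} \Aut(C_i) \wr \fS_{m_i}.
\]
Unfolding the recursion over all nodes, $|\Aut(S,r)|$ divides $\prod_{w} (d(w))!$, where $w$ ranges over internal nodes of $S$ and $d(w)$ is the number of children of $w$. Each such $d(w)$ is at most the valence of $w$ in $T$, hence at most $n$.

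Combining these two steps, $|\Aut(T)|$ divides $2 \prod_{w} (d(w))!$ with every $d(w) \le n$, so it divides (a power of) $n!$. Every prime divisor of $n!$ is at most $n$, and $2 \le n$ since $n \ge 3$, proving the first assertion. The conclusion (SS1) with $N = n!$ is then immediate from the definition in \S\ref{ss:ss-crit}. I do not foresee any serious obstacle here; the only point deserving care is the central-edge case, which is handled cleanly by the factor-of-two bookkeeping above.
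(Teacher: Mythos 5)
Your proof is correct and takes essentially the same route as the paper: the paper simply cites P\'olya's theorem that $\Aut(T)$ is an iterated wreath product of symmetric groups $\fS_2, \ldots, \fS_m$ with $m$ the maximum valence, whereas you spell out the standard proof of that fact (reduce to a rooted tree via the canonical center, with the index-two correction in the central-edge case, and then unfold the wreath-product recursion). The bookkeeping is sound, including the observation that $d(w) \le \deg_T(w) \le n$ for every internal node and that the extra factor of $2$ is harmless since $n \ge 3$.
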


\begin{proof}
A well-known result of P\'olya describes $\Aut(T)$ as a product of iterated wreath product of symmetric groups. The symmetric groups appearing in this description are among $\fS_2, \ldots, \fS_m$ where $m$ is the maximum valence of a vertex in $T$ (excluding the case where $T$ is a single edge). The result thus follows.
\end{proof}

\begin{lemma}
The class $\fT_n$ satisfies (SS2) from \S \ref{ss:ss-crit}.
\end{lemma}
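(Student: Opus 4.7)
My plan is to first reduce to the one-element extension case $Y=X\cup\{y\}$. Given $|Y\setminus X|\ge 2$, pick $y\in Y\setminus X$ and set $Y'=Y\setminus\{y\}$; the one-element case applied to $(Y',Y)$ yields $Z\in\fT_n$ and $j'\colon Y'\to Z$ with $\ge h$ extensions to $Y$, and the restriction $j=j'|_X$ then has at least $h$ extensions as well, since each extension of $j'$ is a fortiori an extension of $j$ and distinct extensions of $j'$ remain distinct as extensions of $j$. So it suffices to assume $Y=X\cup\{y\}$. Let $v=N(y)$, with valence $k$ in $Y$; since $Y$ is reduced and in $\fT_n$, we have $3\le k\le n$. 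I treat the cases $k\ge 4$ and $k=3$ separately.

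In Case A ($k\ge 4$), $v$ is still a node of reduced $X$, of valence $k-1\ge 3$. I build $Z$ by gluing a fresh branch to $v$: take any reduced tree $T$ with at least $h$ leaves and all internal valences $\le n$ (for instance, a binary tree, which is valid since $n\ge 3$), and connect $T$ to $v$ through a single new edge. Then $v$ has valence $k\le n$ in $Z$ and $Z\in\fT_n$. For any leaf $y'\in L(T)$, the subtree of $Z$ spanned by $L(X)\cup\{y'\}$ meets the subtree spanned by $L(X)$ exactly at $v$; the intermediate vertices along the unique $y'$-to-$v$ path have valence $2$ in this spanning subtree and collapse under reduction, yielding $y'$ directly attached to $v$ and an isomorphism of tree structures $L(X)\cup\{y'\}\cong Y$ with $y'\leftrightarrow y$. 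This produces at least $h$ distinct extensions of the inclusion $X\hookrightarrow Z$ to $Y$.

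In Case B ($k=3$), $v$ is \emph{not} a node of reduced $X$: removing $y$ leaves it with valence $2$, which collapses, so the two other neighbors $u_1,u_2$ of $v$ in $Y$ are joined by a single edge $e$ in reduced $X$. I build $Z$ by subdividing $e$ into a path $u_1-w_1-w_2-\cdots-w_h-u_2$ and attaching one pendant leaf $y'_i$ to each $w_i$. Each $w_i$ then has valence $3\le n$, so $Z\in\fT_n$. For each $i$, reducing the valence-$2$ vertices $w_j$ with $j\ne i$ in the subtree of $Z$ spanned by $L(X)\cup\{y'_i\}$ recovers exactly $Y$ with $y\leftrightarrow y'_i$, producing $h$ distinct extensions.

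The main obstacle is that the naive move of attaching many pendant leaves directly at the target location is capped by the level bound $n$; this is precisely what prevents a one-line proof. The key observation enabling both constructions is that the induced structure on $L(X)\cup\{y'\}$ is computed only after valence-$2$ vertices of the spanning subtree are collapsed, so arbitrarily many auxiliary valence-$2$ vertices may be inserted between $y'$ and $v$ (Case A) or along $e$ (Case B) without any single vertex of $Z$ exceeding valence $n$. The only nontrivial verification I anticipate is confirming that the constructed extensions are genuinely pairwise distinct and that no unintended structural coincidences arise; this reduces to an explicit but routine check using the reduction operation on spanning subtrees.
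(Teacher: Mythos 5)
Your argument is correct, but it takes a more complicated route than necessary, and it is instructive to see why. You reduce to the one-element extension $Y=X\cup\{y\}$ and then build $Z$ by modifying the tree $X$: in Case A you graft a branch onto the node $v=N(y)$, and in Case B (where $v$ is not a node of the reduced $X$) you subdivide the edge of $X$ that $v$ collapses onto. The case split is forced precisely because you insist on locating $X$ explicitly inside $Z$. The paper instead builds $Z$ by modifying $Y$: it fixes any leaf $a\in L(Y)\setminus L(X)$, replaces the edge from $N(a)$ to $a$ by a caterpillar with $h$ pendant leaves (so $N(a)$ keeps its valence and the new nodes have valence $3\le n$), and notes that the $h$ obvious embeddings $f_i\colon Y\to Z$ (identity on $Y\setminus a$, $a\mapsto i$-th pendant) all restrict to one and the same map $j=f_i|_X$, which is then the desired embedding. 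This completely sidesteps both the reduction to a one-element extension and the question of how $X$ sits inside $Z$, since one only ever manipulates $Y$. Both constructions are built on the same idea — insert many valence-$2$ vertices so that arbitrarily many leaf positions reduce to the same tree structure without raising the level — so what your proof "buys" is mainly the extra work.

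A minor gap: your case analysis assumes $Y$ has at least three leaves, so that $N(y)$ exists and has valence $\ge 3$. When $Y$ has one or two leaves (equivalently, after your reduction, when $Y$ has at most two leaves), neither case applies. These situations are trivial — any $Z$ with sufficiently many leaves works, since there is only one tree structure on a set of size $\le 2$ — but you should say so explicitly; the paper does acknowledge this degenerate case in a parenthetical. Also, when you say "connect $T$ to $v$ through a single new edge" you should specify that the edge meets $T$ at a leaf (which then becomes a valence-$2$ vertex and reduces away); attaching at an internal node of $T$ could push its valence past $n$ when $n$ is small, though as it happens $n\ge k\ge 4$ in Case A so this never actually bites.
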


\begin{proof}
Let $Y$ be a tree in $\fT_n$, let $X$ be a proper subtree, and let $m>0$ be given. We must produce an embedding $X \to Z$ in $\fT_n$ that extends to $Y$ in at least different $h$ ways. Pick a leaf $a$ of $Y$ that does not belong to $X$. Draw $Y$ as
\begin{displaymath}
\tikz{
\node[boron] (A) at (0,0) {};
\node[leaf,label={\tiny $a$}] (y) at (.5, 0) {};
\node[mid] (a1) at (-.5,-.1) {};
\node[mid] (a2) at (-.5,0) {};
\node[mid] (a3) at (-.5,.1) {};
\node[draw] () at (-.76, 0) {\tiny $R$};
\path[draw] (A)--(y);
\path[draw] (A)--(a1);
\path[draw] (A)--(a2);
\path[draw] (A)--(a3);
}
\end{displaymath}
where $R$ is the rest of the tree, which includes all of $X$. (Note that if $Y$ has only one or two leaves the above picture is inaccurate, but the following argument still goes through.) Define $Z$ to be the following tree
\begin{displaymath}
\tikz{
\node[boron] (A) at (0,0) {};
\node[boron] (B) at (.5, 0) {};
\node[boron] (C) at (1, 0) {};
\node[boron] (D) at (2, 0) {};
\node[leaf,label={\tiny $1$}] (b) at (.5, .5) {};
\node[leaf,label={\tiny $2$}] (c) at (1, .5) {};
\node[leaf,label={\tiny $h-1$}] (d1) at (2, .5) {};
\node[leaf,label=right:{\tiny $h$}] (d2) at (2.5, 0) {};
\node[mid] (a1) at (-.5,-.1) {};
\node[mid] (a2) at (-.5,0) {};
\node[mid] (a3) at (-.5,.1) {};
\node[draw] () at (-.76, 0) {\tiny $R$};
\path[draw] (A)--(a1);
\path[draw] (A)--(a2);
\path[draw] (A)--(a3);
\path[draw] (A)--(B);
\path[draw] (B)--(C);
\path[draw,dotted] (C)--(D);
\path[draw] (B)--(b);
\path[draw] (C)--(c);
\path[draw] (D)--(d1);
\path[draw] (D)--(d2);
}
\end{displaymath}
The tree $Z$ still has level $\le n$, since we have assumed $n \ge 3$. For $1 \le i \le h$, let $f_i \colon Y \to Z$ be the embedding that is the identity on $R$ and that maps $a$ to $i$. These all have the same restriction $f^{\circ}$ to $X$. Thus $f^{\circ} \colon X \to Z$ is an embedding that extends in $h$ different ways to $Y$, as required.
\end{proof}

\begin{corollary} \label{cor:Bss}
Let $k$ be a field in which $n!$ is invertible and let $\bT$ be a tuple of trees. Then the arboreal algebra $B^{\bT}_k(n)$ is semi-simple.
\end{corollary}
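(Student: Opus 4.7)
The plan is to deduce this directly from Theorem~\ref{thm:finite}. By definition, $B^{\bT}_k(n) = \End_{\cD_k(n)}(\Vec_{\bT})$, so it suffices to observe that in a semi-simple pre-Tannakian category every endomorphism algebra is semi-simple.

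In more detail, the hypothesis that $n!$ is invertible in $k$ is exactly what is needed to invoke Theorem~\ref{thm:finite}, which tells us that $\cD_k(n)$ is a semi-simple pre-Tannakian category. In particular, $\cD_k(n)$ is abelian, all Hom spaces are finite-dimensional over $k$, and every object decomposes as a finite direct sum of simple objects. Applied to $X = \Vec_{\bT}$, we obtain a decomposition $X \cong \bigoplus_i S_i^{\oplus m_i}$ into simples, and then by Schur's lemma (valid in any $k$-linear semi-simple category with finite-dimensional Hom spaces)
\[
\End_{\cD_k(n)}(X) \cong \prod_i M_{m_i}(D_i), \qquad D_i = \End_{\cD_k(n)}(S_i),
\]
where each $D_i$ is a finite-dimensional division algebra over $k$. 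A finite product of matrix algebras over finite-dimensional division algebras is semi-simple, giving the claim.

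Thus the only substantive content of the corollary is already contained in Theorem~\ref{thm:finite}; there is no additional obstacle. The proof reduces to a single sentence invoking that theorem together with the standard structure of objects in a semi-simple $k$-linear category.
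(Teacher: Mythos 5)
Your proof is correct and is essentially the paper's intended argument: the corollary is stated without proof as an immediate consequence of Theorem~\ref{thm:finite}, since in a semi-simple $k$-linear category with finite-dimensional Hom spaces every endomorphism algebra is semi-simple. Your unpacking via Schur's lemma and the Wedderburn decomposition is exactly the standard justification.
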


\subsection{Infinite level: singular parameters} \label{ss:singular}

Fix an integer $n \ge 3$. We now study the category $\cC_k(t)$ at parameter value $t=n$. We refer to such parameters as \defn{singular}.

\begin{theorem}
Let $k$ be a ring in which $n-1$ is invertible. Then there is a tensor functor
\begin{displaymath}
\Phi \colon \cC_k(n) \to \cD_k(n)
\end{displaymath}
given on objects by
\begin{displaymath}
\Phi(\Vec_T)=\begin{cases}
\Vec_T & \text{if $T$ has level $\le n$} \\
0 & \text{otherwise} \end{cases}
\end{displaymath}
The functor $\Phi$ is essentially surjective and full.
\end{theorem}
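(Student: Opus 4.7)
The plan is to derive this theorem as an application of Proposition~\ref{prop:special} to the Fra\"iss\'e class $\fT$ equipped with the measure $\mu_n$, followed by passing to Karoubi envelopes.

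First, I would verify the hypothesis of Proposition~\ref{prop:special} in this setting. Using the explicit formula in Proposition~\ref{prop:mu-formula} at $\tau = n$, one checks that $\mu_n(T) \ne 0$ in $\bZ[1/(n-1)]$ if and only if every node of $T$ has valence at most $n$, i.e., $T \in \fT_n$. Hence the subclass of $\fT$ consisting of structures with nonzero measure is precisely $\fT_n$, and the induced measure provided by Proposition~\ref{prop:induced} is exactly the measure $\mu'_n$ of \S\ref{ss:finite-meas}.

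A technical subtlety is that Proposition~\ref{prop:special} is stated for a measure valued in a domain, while $k$ here need only satisfy $n - 1 \in k^{\times}$. To address this, I would first apply the proposition over the domain $R_0 = \bZ[1/(n-1)]$, in which $\mu_n$ is naturally valued, producing a tensor functor
\[
\Phi_0 \colon \uPerm_{R_0}(\fT; \mu_n) \to \uPerm_{R_0}(\fT_n; \mu'_n)
\]
that is essentially surjective and full with the description given in the theorem. Since $\Hom$-spaces in $\uPerm_k(\fF;\mu)$ are free $k$-modules on amalgamations with composition given by structure constants coming from $\mu$, the construction is compatible with base change of coefficients. Base-changing $\Phi_0$ along the canonical map $R_0 \to k$ therefore yields a $k$-linear tensor functor $\Phi_1 \colon \uPerm_k(\fT; \mu_n) \to \uPerm_k(\fT_n; \mu'_n)$ with the same description on objects and morphisms, still essentially surjective and full.

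Finally, I would extend to Karoubi envelopes via the universal property to obtain $\Phi \colon \cC_k(n) \to \cD_k(n)$. Fullness passes to the Karoubi envelope because Hom-spaces there are subspaces of Hom-spaces at the $\uPerm$ level, cut out by composition with idempotents, and fullness of $\Phi_1$ restricts to this subspace. Essential surjectivity reduces to the observation that every object of $\cD_k(n)$ is a summand of some $\Vec_T$ with $T \in \fT_n$, together with lifting the corresponding idempotent through the surjection of endomorphism algebras provided by fullness of $\Phi_1$; I anticipate this idempotent-lifting step to be the most delicate part of the argument, but the candidate lift produced by fullness can be analyzed using the explicit description of the kernel, which is spanned by $\phi_Z$ for amalgamations $Z \not\in \fT_n$. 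The main nontrivial content is packaged in Proposition~\ref{prop:special}, which we apply essentially as a black box.
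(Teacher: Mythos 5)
Your proposal matches the paper's own proof in structure: the paper's argument is simply the two-sentence remark that the statement follows from Proposition~\ref{prop:special} applied over $\bZ[1/(n-1)]$ together with extension of scalars. You correctly identify the vanishing locus of $\mu_n$ as $\fT_n$ (this is exactly what the paper records in \S\ref{ss:finite-meas}), you correctly handle the domain hypothesis in Proposition~\ref{prop:special} by first working over $\bZ[1/(n-1)]$ and then base-changing, and you correctly observe that the $\uPerm$ construction is compatible with base change because Hom spaces are free modules on amalgamations with structure constants coming from $\mu$. So the substance of your argument is the same as the paper's.

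The one place you are more cautious than the paper is the passage to Karoubi envelopes, and your caution is warranted. Fullness does pass for free: if $h \colon (\Phi_1 X, \Phi_1 e) \to (\Phi_1 Y, \Phi_1 f)$ is given, lift $h$ to some $g \colon X \to Y$ by fullness of $\Phi_1$ and replace it by $fge$, which lies in $\Hom((X,e),(Y,f))$ and maps to $(\Phi_1 f) h (\Phi_1 e) = h$. Essential surjectivity, however, is not automatic: one needs to lift idempotents along the surjections $\End_{\uPerm_k(\fT;\mu_n)}(\Vec_{\bT}) \twoheadrightarrow \End_{\uPerm_k(\fT_n;\mu'_n)}(\Vec_{\bT})$, whose kernel is spanned by the $\phi_Z$ with $Z \notin \fT_n$, and over a general coefficient ring this is not a formality (a quotient of a Karoubi-complete category need not be Karoubi-complete). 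The paper glosses over this step entirely. In the situations where the essential-surjectivity claim is actually used downstream --- notably when $k$ is a field in which $n!$ is invertible, so $\cD_k(n)$ is semi-simple by Theorem~\ref{thm:finite} --- the kernel of each endomorphism-algebra surjection contains the Jacobson radical (because the quotient is semi-simple), and idempotent lifting then works for finite-dimensional algebras over a field. To make your step 3 airtight, you should either restrict the essential-surjectivity claim to that setting, or supply an argument that the kernel ideal is nil over $\bZ[1/(n-1)]$; the latter is the ``analyze the kernel'' route you suggest but do not carry out. Aside from this (which is a gap in the paper as much as in your write-up), your proof is correct and is the intended argument.
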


\begin{proof}
This follows from Proposition~\ref{prop:special} when $k=\bZ[1/(n-1)]$. The general case follows from this case via extension of scalars.
\end{proof}

The theorem has two important corollaries.

\begin{corollary} \label{cor:AB-isom}
Let $\bT=(T_1, \ldots, T_r)$ be a tuple of trees and let $k$ be a ring in which $n-1$ is invertible. There is a surjective ring homomorphism
\begin{displaymath}
F \colon A^{\bT}_k(n) \to B^{\bT}_k(n)
\end{displaymath}
If each $T_i$ has level $\le \tfrac{1}{2} n$ then this map is an isomorphism.
\end{corollary}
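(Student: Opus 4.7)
The plan is to obtain $F$ by applying the tensor functor $\Phi$ from the preceding theorem to $\Vec_{\bT}$ and passing to endomorphism algebras; since $\Phi$ is full as a functor, the resulting $k$-algebra map is automatically surjective. This settles the first assertion with no further work.

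For injectivity, I will work with the natural bases. The algebra $A^{\bT}_k(n)$ decomposes as $\bigoplus_{i,j} k[\Amalg(T_i,T_j)]$, and $B^{\bT}_k(n)$ is the corresponding direct sum restricted to those amalgamations of level $\le n$. By the explicit description of $\Phi$ on morphisms, $F$ acts on basis elements by $\phi_Z \mapsto \phi_Z$ when $\mathrm{level}(Z) \le n$ and $\phi_Z \mapsto 0$ otherwise. Hence $\ker F$ is spanned by those basis elements $\phi_Z$ with $\mathrm{level}(Z) > n$, and it suffices to prove that whenever $\mathrm{level}(T_\alpha) \le n/2$ for every $\alpha$, each amalgamation $Z$ of two of the $T_\alpha$'s already belongs to $\fT_n$.

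The crux is a valence bound. Let $Z$ be an amalgamation of $T_i$ and $T_j$, and let $z$ be a node of $Z$ of valence $v(z)$. For $\alpha \in \{i,j\}$, set $v_\alpha(z)$ to be the number of connected components of $Z \setminus \{z\}$ that contain at least one leaf labeled by $L(T_\alpha)$. Because every component of $Z \setminus \{z\}$ contains at least one leaf of $Z$, and every leaf of $Z$ lies in $L(T_i) \cup L(T_j)$, we get $v(z) \le v_i(z) + v_j(z)$. On the other hand, since the leaves of $T_\alpha$ span a subtree of $Z$ homeomorphic to $T_\alpha$, a short case analysis on the location of $z$ relative to this spanned subtree (a node of $T_\alpha$, an interior point of an edge of $T_\alpha$, or a point outside the spanned subtree) shows $v_\alpha(z) \le \mathrm{level}(T_\alpha)$. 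Summing the two bounds yields $v(z) \le \mathrm{level}(T_i) + \mathrm{level}(T_j) \le n$, so $Z \in \fT_n$ as desired.

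The step I expect to require the most care is the case analysis establishing $v_\alpha(z) \le \mathrm{level}(T_\alpha)$, and in particular the low-valence degenerate cases where $T_\alpha$ has too few leaves to have a node at all (so a formal $v_\alpha(z) \le 2$ must be folded in harmlessly); everything else is formal bookkeeping given the description of $F$ in terms of $\Phi$.
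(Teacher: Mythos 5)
Your proposal takes essentially the same route as the paper: surjectivity comes for free from fullness of $\Phi$, both algebras are described by their natural bases, $F$ sends $\phi_Z$ to $\phi_Z$ or to $0$ according to whether $\mathrm{level}(Z) \le n$, and injectivity therefore reduces to the bound $\mathrm{level}(Z) \le \mathrm{level}(T_i) + \mathrm{level}(T_j)$ for any amalgamation $Z$. The paper simply asserts this bound (``one easily sees that...''), whereas you supply a genuine argument via the component count $v(z) \le v_i(z) + v_j(z)$ and $v_\alpha(z) \le \mathrm{level}(T_\alpha)$, so your write-up fills in a detail the paper leaves to the reader.

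One caveat: the degenerate case you flag is not quite as harmless as you suggest, and the same defect is implicit in the paper's unproved claim. With the paper's definition of level (maximum valence of a node), a reduced tree with two leaves has no nodes and hence level $0$, yet a node $z$ of $Z$ lying in the interior of the spanned path has $v_\alpha(z) = 2$. So the bound $v_\alpha(z) \le \mathrm{level}(T_\alpha)$ fails, and falling back on $v_\alpha(z)\le 2$ is not sufficient in general: for $n=3$ and $T_i = T_j$ the two-leaf tree, the hypothesis $\mathrm{level}(T_\alpha) \le n/2$ holds vacuously, but the four-leaf star is an amalgamation of level $4 > 3$, so $F$ has a nontrivial kernel. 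The fix is either to restrict to $T_i$ with at least three leaves (so level $\ge 3$) or to adopt the convention that small trees have level at least $2$; in the one place the corollary is used (Lemma~\ref{lem:nonsing-3}) $n$ can be chosen large enough that this never bites, so the issue is inconsequential for the paper, but the phrase ``must be folded in harmlessly'' papers over a genuine gap and should be replaced by an explicit extra hypothesis or convention.
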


\begin{proof}
Since $\Phi$ is a full functor, it induces a surjective ring homomorphism $F$ as above. The algebra $A^{\bT}_k(n)$ has a basis consisting of elements $\phi_Z$, where $Z$ is an amalgamation of $T_i$ and $T_j$. The algebra $B^{\bT}_k(n)$ has a similar basis, except where $Z$ must have level $\le n$. The homomorphism $F$ maps $\phi_Z$ to $\phi_Z$ if $Z$ has level $\le n$, and~0 otherwise. Now, one easily sees that if $T$ and $T'$ are trees of levels $\ell_1$ and $\ell_2$ then any amalgamation has level $\le \ell_1+\ell_2$. It follows that if each $T_i$ has level $\le \tfrac{1}{2} n$ then any amalgamation of $T_i$ and $T_j$ has level $\le n$, and so $F$ is a bijection on basis vectors, and thus an isomorphism.
\end{proof}

\begin{corollary}
Let $k$ be a field in which $n!$ is invertible. Then any nilpotent endomorphism in $\cC_k(n)$ has categorical trace~0, and the semi-simplification of $\cC_k(n)$ is $\cD_k(n)$.
\end{corollary}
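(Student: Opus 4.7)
The plan is to apply Corollary~\ref{cor:special} directly, with $\fF = \fT$, $\mu = \mu_n$, $\fF' = \fT_n$, and $\mu' = \mu'_n$. To invoke that corollary, I need to check three things: that $k$ is a field (given), that $\mu_n$ takes values in a domain (immediate, since $k$ is a field), and that $\fT_n$ is precisely the Fra\"iss\'e subclass of $\fT$ consisting of structures with non-zero $\mu_n$-measure.

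First I would recall from Proposition~\ref{prop:mu-formula} (or rather the discussion immediately after it, in \S\ref{ss:finite-meas}) that $\mu_n(T) = 0$ if and only if the tree $T$ contains a node of valence $>n$, i.e., if and only if $T \notin \fT_n$. Thus $\fT_n$ coincides with the subclass $\fF'$ appearing in Proposition~\ref{prop:induced}, and the restricted measure $\mu'_n$ on $\fT_n$ agrees with the one constructed in \S\ref{ss:finite-meas}. This identification is the one place the argument must be explicit, but it is essentially a bookkeeping step given the explicit formula for $\mu_n$.

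Next I would invoke Theorem~\ref{thm:finite}: since $n!$ is invertible in $k$, the Karoubi envelope $\cD_k(n) = \uPerm_k(\fT_n; \mu'_n)^{\rm kar}$ is a semi-simple pre-Tannakian category. This is precisely the hypothesis of Corollary~\ref{cor:special}.

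Applying Corollary~\ref{cor:special} now yields both conclusions at once: nilpotent endomorphisms in $\cC_k(n) = \uPerm_k(\fT; \mu_n)^{\rm kar}$ have categorical trace $0$, and the induced tensor functor $\Phi \colon \cC_k(n) \to \cD_k(n)$ of the preceding theorem (which is the tensor functor $\Phi$ of Proposition~\ref{prop:special} after passing to Karoubi envelopes) realizes $\cD_k(n)$ as the semi-simplification of $\cC_k(n)$. There is no real obstacle here; the entire content of the corollary has been packaged into the general machinery, and the only thing to verify at this specific instance is the compatibility $\fF' = \fT_n$, which is visible from the explicit formula for $\mu_n$.
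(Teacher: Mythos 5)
Your proof is correct and follows exactly the paper's approach: the paper's proof is simply the one-liner ``This follows from Corollary~\ref{cor:special} and Theorem~\ref{thm:finite},'' and you have filled in the (routine) verification that $\fT_n$ is the subclass of $\fT$ where $\mu_n$ is non-vanishing.
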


\begin{proof}
This follows from Corollary~\ref{cor:special} and Theorem~\ref{thm:finite}.
\end{proof}

\subsection{Infinite level: non-singular parameters} \label{ss:nonsing}

We now study the category $\cC_k(t)$ at non-singular values of the parameter $t$, i.e., values that are not natural numbers. The following is our main result:

\begin{theorem} \label{thm:nonsing}
Let $k$ be a field of characteristic~0 and let $t \in k \setminus \bN$ be a non-singular parameter. Then $\cC_k(t)$ is a semi-simple pre-Tannakian category.
\end{theorem}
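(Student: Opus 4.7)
The plan is to reduce to showing that every endomorphism algebra $A^{\bT}_k(t) = \End_{\cC_k(t)}(\Vec_{\bT})$ is semi-simple, for each finite tuple of trees $\bT$; the remaining pre-Tannakian axioms (rigidity, finite-dimensional Homs, $\End(\bbone)=k$) are built into the $\uPerm$ construction. The algebra $A^{\bT}$ has a natural model $A^{\bT}_R(\tau)$ over $R = \bZ[\tau, (\tau-1)^{-1}]$, which is $R$-free of finite rank (equal to the number of self-amalgamations of $\bT$) with structure constants in $R$. Thus $A^{\bT}_k(t)$ arises by base change $R \to k$, $\tau \mapsto t$, and its semi-simplicity is detected by the non-vanishing at $t$ of the discriminant $\delta_{\bT}(\tau) \in R$ of the regular (left-multiplication) trace form.

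The heart of the argument is a positive-characteristic input. Set $L = \max_i \mathrm{level}(T_i)$. For any prime $p$ and integer $n$ with $2L \le n < p$, Corollary~\ref{cor:AB-isom} identifies $A^{\bT}_{\bF_p}(n)$ with the bounded-level arboreal algebra $B^{\bT}_{\bF_p}(n)$, and Corollary~\ref{cor:Bss} shows the latter is semi-simple. Hence $\delta_{\bT}(n) \ne 0$ in $\bF_p$, and since $p$ may be taken arbitrarily large, $\delta_{\bT}(n) \ne 0$ in $\bQ$ for every $n \ge 2L$. A symmetric trick for negative integers (choose $p$ so that the residue $n+p$ lies in $[2L, p-1]$) yields $\delta_{\bT}(n) \ne 0$ for every $n \in \bZ_{<0}$ as well, so $\delta_{\bT}(\tau)$ is a non-zero element of $\bQ[\tau, (\tau-1)^{-1}]$; its zero set in $\ol{\bQ}$ is therefore finite.

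It remains to upgrade this to $\delta_{\bT}(t) \ne 0$ for every non-singular $t \in k$. If $t$ is transcendental over $\bQ$, this is automatic. If $t \in \ol{\bQ} \setminus \bN$, I would let $K = \bQ(t)$ and use Chebotarev density to produce a prime $p$ splitting completely in $K$, with a prime $\mathfrak{p} \mid p$ of $\cO_K$ whose residue field is $\bF_p$. For $p$ sufficiently large and chosen outside a finite exceptional set, we may arrange $t \in \cO_{K,\mathfrak{p}}$, $t-1 \notin \mathfrak{p}$, and $\ol{t} \in \bF_p$ represented by an integer $n$ with $2L \le n < p$; the avoidance of the finitely many bad residues uses that $t$ differs from each of $0, 1, \ldots, 2L-1$ in $K$, so each equation $t \equiv n_0 \pmod{\mathfrak{p}}$ cuts out only finitely many $\mathfrak{p}$. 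Then $\delta_{\bT}(\ol{t}) = \delta_{\bT}(n) \ne 0$ in $\bF_p$, hence $\delta_{\bT}(t) \notin \mathfrak{p}$, and so $\delta_{\bT}(t) \ne 0$ in $K \subseteq k$.

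The main obstacle I anticipate is the simultaneous control of the arithmetic of $\mathfrak{p}$---it must split in $K$, lie over a large enough prime so that the integer lift $n$ of $\ol{t}$ satisfies $n \ge 2L$, and stay below $p$ so that Corollaries~\ref{cor:AB-isom} and~\ref{cor:Bss} both apply. Chebotarev gives a positive density of split primes, and a standard counting argument (each bad residue is supported on finitely many $\mathfrak{p}$) eliminates the exceptional cases; once the prime is in place, discriminant lifting through $\cO_{K,\mathfrak{p}}$ and base change in characteristic zero finish the proof.
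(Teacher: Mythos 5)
Your proposal is correct and follows essentially the same strategy as the paper: reduce to semi-simplicity of the endomorphism algebras $A^{\bT}_k(t)$, identify $A^{\bT}_{\bF_p}(n)$ with the bounded-level algebra $B^{\bT}_{\bF_p}(n)$ via Corollary~\ref{cor:AB-isom} for a suitable prime $\fp$ and residue $n$ in $[2L,p)$, invoke Corollary~\ref{cor:Bss}, and lift semi-simplicity to characteristic zero via a discriminant/Tits-deformation argument, with Chebotarev locating the prime when $t$ is algebraic (the paper packages exactly this as Lemmas~\ref{lem:nonsing-1}--\ref{lem:nonsing-3}). The only inefficiency is the excursion through negative integers: a single $n \ge 2L$ already forces $\delta_{\bT}(\tau) \ne 0$ in the domain $\bQ[\tau,(\tau-1)^{-1}]$, which suffices for the transcendental case.
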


We require a few lemmas before proving the theorem. The first is a weaker form of the Tits deformation theorem.

\begin{lemma} \label{lem:nonsing-1}
Let $R$ be an integral domain and let $A$ be a finite free $R$-algebra. Suppose there is a maximal ideal $\fm$ of $R$ such that $R/\fm$ is a perfect field and $A \otimes_R R/\fm$ is semi-simple. Then $A \otimes_R \Frac(R)$ is also semi-simple.
\end{lemma}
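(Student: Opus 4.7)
The plan is to use the classical fact that for a finite-dimensional algebra $B$ over a field $k$, $B$ is separable (i.e., $B \otimes_k \bar k$ is semi-simple) if and only if its regular trace form $\beta_B(a,b) = \tr_{B/k}(L_{ab})$ is non-degenerate, together with the fact that semi-simplicity implies separability whenever the base field is perfect. Thus semi-simplicity of $A \otimes_R R/\fm$ will be detected by the non-vanishing of a single discriminant in $R/\fm$, and this invariant will immediately propagate to the generic fiber over $\Frac(R)$.

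Concretely, I would fix an $R$-basis $e_1,\dots,e_n$ of $A$, form the symmetric matrix $M=(\tr_R(L_{e_ie_j}))_{ij}$ over $R$ (where $L_x \in \End_R(A)$ denotes left multiplication by $x$), and set $d=\det(M)\in R$. For any commutative $R$-algebra $S$, the base change $A\otimes_R S$ is free over $S$ on the basis $e_i\otimes 1$, and the Gram matrix of its regular trace form in this basis is the image of $M$ in $S$. Hence, when $S$ is a field, the trace form on $A\otimes_R S$ is non-degenerate precisely when the image of $d$ in $S$ is non-zero.

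Applying this with $S=R/\fm$, the hypothesis combined with perfectness of $R/\fm$ says that $A\otimes_R R/\fm$ is separable, so its trace form is non-degenerate, i.e.\ $d\notin\fm$. A fortiori $d\ne 0$ in $R$, and its image in $K=\Frac(R)$ is non-zero. Applying the base-change statement with $S=K$, the trace form on $A\otimes_R K$ is non-degenerate, so $A\otimes_R K$ is separable, and in particular semi-simple.

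The only step requiring care is the chain of equivalences ``semi-simple $+$ perfect base $\Rightarrow$ separable $\iff$ trace form non-degenerate'' for finite-dimensional associative, not necessarily commutative, algebras; both implications are standard (by Wedderburn, the question reduces to the separability of central simple algebras and of residue field extensions, and the latter uses perfectness), so I would just cite a textbook reference such as Bourbaki's \emph{Alg\`ebre}, Ch.~VIII.
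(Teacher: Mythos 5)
Your proposal is correct and follows essentially the same argument as the paper: both proofs form the discriminant of the regular trace form over $R$, deduce its non-vanishing modulo $\fm$ from semi-simplicity over the perfect residue field, and conclude non-degeneracy (hence semi-simplicity) over $\Frac(R)$. You are somewhat more explicit than the paper about routing through separability and about the base-change compatibility of the Gram matrix, but the underlying idea is identical.
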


\begin{proof}
Since $A$ is finite free as an $R$-module, it admits a trace pairing. Let $\delta$ be its discriminant, i.e., the determinant of the matrix of the pairing. Since $A \otimes_R R/\fm$ is a semi-simple algebra over a perfect field, its trace pairing is non-degenerate. Thus $\delta$ is non-zero modulo $\fm$, and in particular a non-zero element of $R$. We thus see that the trace pairing on $A \otimes_R \Frac(R)$ is non-degenerate, and so this algebra is semi-simple.
\end{proof}

\begin{lemma} \label{lem:nonsing-2}
Let $k$ be an algebraic number field, let $t \in k \setminus \bN$, and let $r>0$ be an integer. Then there exists a prime ideal $\fp$ of $k$ over a rational prime number $p$ and an integer $n$ in the interval $(r,p)$ such that $t$ is integral at $\fp$ and $t \equiv n \pmod{\fp}$.
\end{lemma}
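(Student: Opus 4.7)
The plan is as follows. Let $\cO = \cO_k$ denote the ring of integers of $k$. Since $t \in k \setminus \bN$, the elements $t, t-1, t-2, \ldots, t-r$ are all nonzero in $k$, so each generates a nonzero (fractional) principal ideal of $\cO$, and each such ideal has only finitely many prime divisors. Let $S_{\infty}$ denote the finite set of prime ideals $\fp$ at which $t$ has a pole (i.e., $v_{\fp}(t) < 0$), and for $i = 0, 1, \ldots, r$ let $S_i$ denote the finite set of prime ideals $\fp$ with $v_{\fp}(t-i) \ne 0$. Put $S = S_{\infty} \cup S_0 \cup S_1 \cup \cdots \cup S_r$; this is a finite set of primes of $\cO$.

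Next I will produce a prime ideal $\fp$ of $\cO$ of residue degree one over $\bQ$, lying over a rational prime $p > r$, and not belonging to $S$. By the Chebotarev density theorem applied to the Galois closure of $k/\bQ$, there are infinitely many rational primes that split completely in the Galois closure; each such rational prime $p$ has all its prime factors in $\cO$ of residue degree one. In particular, infinitely many primes $\fp$ of $\cO$ have residue degree one, and discarding the finitely many that lie in $S$ or lie above a rational prime $\le r$ leaves infinitely many candidates.

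Having fixed such $\fp$ over such $p$, the residue field $\cO/\fp$ is canonically $\bF_p$, and the natural composition $\bZ \hookrightarrow \cO \twoheadrightarrow \cO/\fp = \bF_p$ is the usual reduction. Since $\fp \notin S_{\infty}$, the element $t$ is integral at $\fp$, so its image in $\bF_p$ is represented by a unique integer $n \in \{0, 1, \ldots, p-1\}$, and $t \equiv n \pmod{\fp}$. For each $i \in \{0, 1, \ldots, r\}$, the condition $\fp \notin S_i$ together with integrality gives $v_{\fp}(t-i) = 0$, so $n \not\equiv i \pmod{p}$; because $0 \le n, i < p$, this forces $n \ne i$ as integers. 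Therefore $n \ge r+1$, and combined with $n < p$ this gives $n \in (r,p)$, as required.

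The argument is routine once the framework is in place, and there is no real obstacle: the finiteness of $S$ is automatic from the nonvanishing of each $t-i$, and the existence of infinitely many degree-one primes above arbitrarily large rational primes is immediate from Chebotarev (or even the more elementary statement that infinitely many rational primes have a degree-one factor in $\cO$). The only subtlety worth emphasizing is why residue degree one is essential: it is precisely what allows $\bar t \in \cO/\fp$ to be represented by a rational integer $n \in \{0,\ldots,p-1\}$.
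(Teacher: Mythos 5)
Your proof is correct and takes essentially the same approach as the paper: use Chebotarev to produce infinitely many degree-one primes, discard the finitely many bad ones (poles of $t$ and primes dividing $t-i$ for $i \le r$), and observe that the reduction of $t$ at a remaining prime is then forced to be an integer $n$ with $r < n < p$. The one step you leave slightly implicit is why $p \ge r+2$ (so that $(r,p)$ is non-empty), but this is automatic: your argument shows $n \in \{0,\ldots,p-1\}$ and $n \notin \{0,\ldots,r\}$, which is impossible unless $p-1 \ge r+1$. The paper's write-up is essentially identical, just phrased slightly more informally.
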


\begin{proof}
Let $\tilde{S}$ be the set of prime ideals of $k$ whose residue field is prime, i.e,. of the form $\bF_p$ for some prime number $p$. This set is infinite, and even of positive density by the Chebotarev density theorem. Let $S \subset \tilde{S}$ be the subset consisting of primes $\fp$ at which $t$ is $\fp$-integral. This condition excludes only finitely many primes, and so $S$ is still infinite. For a non-negative integer $a$, let $S_a \subset S$ be the set of primes $\fp$ such that $t \equiv a \pmod{\fp}$. This set is finite, since only finitely many primes divide the non-zero algebraic number $t-a$. (This is where we use $t \not\in \bN$.) Finally, let
\begin{displaymath}
S'=S \setminus (S_0 \cup S_1 \cup \cdots \cup S_r),
\end{displaymath}
which is infinite. Choose $\fp \in S'$, and let $p$ be the corresponding rational prime. Then $t$ is not congruent to any of $0, \ldots, r$ modulo $\fp$, but is congruent to some rational integer modulo $\fp$. We can thus find $n \in (r,p)$ such that $t$ is congruent to $n$ modulo $\fp$.
\end{proof}

\begin{lemma} \label{lem:nonsing-3}
Let $\bT=(T_1, \ldots, T_r)$ be a tuple of trees, let $k$ be a field of characteristic~0, and let $t \in k \setminus \bN$ be a non-singular parameter. Then the algebra $A^{\bT}_k(t)$ is semi-simple.
\end{lemma}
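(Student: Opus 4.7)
The plan is to apply the Tits-deformation-flavored Lemma~\ref{lem:nonsing-1} to an integral model of $A^{\bT}_k(t)$, specialize the parameter to an integer $n$ modulo a large prime $p$, and identify the reduction with the semi-simple finite-level algebra $B^{\bT}_{\bF_p}(n)$ from Corollary~\ref{cor:Bss}.

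I first reduce to the subfield $k_0 \subset k$ generated by $t$ over $\bQ$: since $k_0$ has characteristic~$0$ and is therefore perfect, any semi-simple finite-dimensional $k_0$-algebra is separable, so semi-simplicity persists under base change, and it is enough to prove $A^{\bT}_{k_0}(t)$ is semi-simple. By Proposition~\ref{prop:mu-formula} the composition coefficients $\mu_\tau(Y_3 \to Z) = \mu_\tau(Z)/\mu_\tau(Y_3)$ governing multiplication in the arboreal algebra lie in $R := \bZ[\tau, (\tau - 1)^{-1}]$: the $\tau$ in the numerator of $\mu_\tau$ cancels in the ratio, and the only denominators are powers of $\tau - 1$. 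Thus there is a finite free $R$-algebra $\cA^{\bT}$ whose specialization along any ring homomorphism $R \to S$ with $\tau \mapsto s$ (and $s-1$ a unit in $S$) recovers $A^{\bT}_S(s)$. Let $R' \subset k_0$ be the image of the map $R \to k_0$, $\tau \mapsto t$; it is a subdomain with $\Frac(R') = k_0$.

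Next I produce a maximal ideal $\fm \subset R'$ with residue field $\bF_p$ and $t \equiv n \pmod{\fm}$ for a suitable integer $n$. Set $r := 2 \max_i \operatorname{level}(T_i) + 1$. If $t$ is transcendental over $\bQ$, then $R' = R$, and for any integer $n \ge r$ and prime $p > n$ the ideal $\fm := (p, \tau - n)$ is maximal in $R$ with residue field $\bF_p$. If $t$ is algebraic, Lemma~\ref{lem:nonsing-2} produces a prime $\fp$ of $k_0$ over a rational prime $p$ and an integer $n \in (r, p)$ with $t$ integral at $\fp$ and $t \equiv n \pmod{\fp}$; since $n \neq 1$, $t - 1$ is a $\fp$-unit, and $\fm := \fp \cap R'$ is a prime of $R'$ whose residue ring injects into $\bF_p$ and contains $1$, hence equals $\bF_p$ and is maximal.

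Because $\tau$ maps to $n \in \bF_p$ under $R \to R'/\fm$, the reduction $\cA^{\bT} \otimes_R R'/\fm$ is canonically $A^{\bT}_{\bF_p}(n)$. The choice $n \ge 2 \max_i \operatorname{level}(T_i)$ triggers Corollary~\ref{cor:AB-isom}, identifying this with $B^{\bT}_{\bF_p}(n)$, and $p > n$ makes $n!$ invertible in $\bF_p$, so Corollary~\ref{cor:Bss} shows this algebra is semi-simple. Lemma~\ref{lem:nonsing-1} then gives that $\cA^{\bT} \otimes_R k_0 = A^{\bT}_{k_0}(t)$ is semi-simple, and base change to $k$ finishes the argument. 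The main obstacle is arranging a single integer $n$ that simultaneously satisfies the level bound required by Corollary~\ref{cor:AB-isom} and the primality bound $p > n$ needed to invert $n!$ in the residue field; Lemma~\ref{lem:nonsing-2} is tailored precisely to deliver such an $n$ in the algebraic case, while the transcendental case is handled by hand.
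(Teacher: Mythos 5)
Your proof is correct and follows the same strategy as the paper's: specialize the arboreal algebra modulo a suitably chosen prime to land in $B^{\bT}_{\bF_p}(n)$ (via Corollary~\ref{cor:AB-isom} and Corollary~\ref{cor:Bss}), then apply the deformation Lemma~\ref{lem:nonsing-1} and extend scalars, with Lemma~\ref{lem:nonsing-2} supplying the needed $n$ and $p$ in the algebraic case. The one stylistic difference is that you treat the algebraic and transcendental cases uniformly by working over the single integral model $R = \bZ[\tau,(\tau-1)^{-1}]$ and producing the maximal ideal $(p,\tau-n)$ by hand when $t$ is transcendental, whereas the paper first establishes the algebraic case and then handles transcendental $t$ by specializing $A^{\bT}_{\bQ[\tau]}(\tau)$ at the ideal $(\tau+1)$ and invoking Lemma~\ref{lem:nonsing-1} once more; your version is a touch more streamlined. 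Two small points worth tightening, the first of which the paper shares: (i) you need $n \ge 3$ for $\fT_n$ and $B^{\bT}_{\bF_p}(n)$ to even be defined, so replace your $r$ by $\max\{r,3\}$ (the bound $n>r$ alone could permit $n=2$ when every $T_i$ has level~$0$); (ii) in the algebraic case the notation ``$\fp \cap R'$'' should be read as $\fm_\cO \cap R'$ where $\cO$ is the valuation ring of $k_0$ at $\fp$, since $R'=\bZ[t,(t-1)^{-1}]$ is contained in $\cO$ but not in the ring of integers of $k_0$ — this is exactly why the paper works with $\cO$ in that case.
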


\begin{proof}
First suppose that $t$ is algebraic over $\bQ$, and let $k_0=\bQ(t)$ be the subfield of $k$ generated by $t$. Let $m$ be the maximum level of the $T_i$'s. By the previous lemma, there is a prime $\fp$ of $k_0$ over a rational prime $p$ and an integer $n$ in the interval $(2m,p)$ such that $t$ is integral at $\fp$ and $t \equiv n \pmod{\fp}$. Let $\cO \subset k_0$ be the ring of $\fp$-integral elements. We have isomorphisms
\begin{displaymath}
A^{\bT}_{\cO}(t) \otimes_{\cO} \cO/\fp \cong A^{\bT}_{\cO/\fp}(n) \cong B^{\bT}_{\cO/\fp}(n),
\end{displaymath}
where the first is simply the compatibility of $A$ with base change, and the second is Corollary~\ref{cor:AB-isom}. By Corollary~\ref{cor:Bss}, these rings are semi-simple. Thus, by Lemma~\ref{lem:nonsing-1}, we conclude that $A^{\bT}_{\cO}(t) \otimes_{\cO} k_0 = A^{\bT}_{k_0}(t)$ is semi-simple. Since $A^{\bT}_k(t)$ is simply an extension of scalars of this algebra, it too is semi-simple.

Now suppose that $t$ is transcendental over $\bQ$. Consider the algebra $A^{\bT}_{\bQ[\tau]}(\tau)$. By the previous paragraph, this is semi-simple modulo the maximal ideal $(\tau-t)$ for any non-singular algebra parameter $t$, such as $t=-1$. Thus by Lemma~\ref{lem:nonsing-1}, we find that $A^{\bT}_{\bQ(\tau)}(\tau)$ is semi-simple. The algebra $A^{\bT}_k(t)$ is a base change of this algebra along the field homomorphism $\bQ(\tau) \to k$ mapping $\tau$ to $t$, and is therefore semi-simple.
\end{proof}

\begin{proof}[Proof of Theorem~\ref{thm:nonsing}]
By Lemma~\ref{lem:nonsing-3}, the endomorphism algebra of any object in the category $\uPerm_k(\fT; \mu_t)$ is semi-simple. Thus its Karoubi envelope $\cC_k(t)$ is semi-simple. (See, e.g., \cite[Proposition~2.4]{dblexp} for this general fact.)
\end{proof}

\begin{remark} \label{rmk:ultra}
Let $I$ be an index set equipped with a non-principal ultrafilter. For each $i \in I$, let $k_i$ be a field and let $n_i$ be an integer with $n_i!$ invertible in $k_i$. Let $k$ be the ultraproduct of the $k_i$'s, and let $t$ be the element of $k$ represented by the tuple $(n_i)_{i \in I}$. One can show that $\cC_k(t)$ is equivalent to a full subcategory of the ultraproduct of the $\cD_{k_i}(n_i)$. It is possible to use this perspective to give a different proof of Theorem~\ref{thm:nonsing}.
\end{remark}

\begin{remark}
There is yet another approach to Theorem~\ref{thm:nonsing}. Let $\Omega$ be the Fra\"iss\'e limit of $\fT$, let $G$ be its automorphism group, and let $\sE$ be the stabilizer class as in \S \ref{ss:oligo}. The $R$-valued measure $\mu_{\tau}$ on $\fT$ corresponds to a measure for $G$ relative to $\sE$. Letting $\sE^+$ be as in \S \ref{ss:ss-crit}, one can show that $\mu$ extends to a measure for $G$ relative to $\sE^+$. This measure, it turns out, takes value in the ring of integer-valued polynomials with $\tau-1$ inverted. It follows that the measure satisfies Property~(P) from \cite[Definition~7.17]{repst}; see \cite[\S 15.5]{repst}. A slightly more general version of Proposition~\ref{prop:ss-crit} then implies that $\cC_k(t)$ is semi-simple. The proof that the ring of integer-valued polynomials satisfies Property~(P) relies on some number theory similar to that in Lemma~\ref{lem:nonsing-2}.
\end{remark}

\subsection{Further comments} \label{ss:further}

We make a few additional comments related to the material in this section.

(a) We have determined the measures on the Fra\"iss\'e class $\fT$ of trees. However, one might wonder what the measures are for the corresponding oligomorphic group $\Gamma$ (the automorphism group of the Fra\"iss\'e limit), which is perhaps a more natural question. This is addressed in \cite{Nekrasov}. We do not give the full answer here, but describe one of the key ideas. Define an ``enhanced tree'' to be a tree where each vertex is either marked as a leaf or a node. The leaves must have valence one, but there is no restriction on the valence of nodes; they are allowed to have valence one, two, or higher. There is a corresponding notion of ``enhanced tree structures.'' These structures form a Fra\"iss\'e class, and one can determine the measures through an analysis similar to that in \S \ref{s:trees}. This Fra\"iss\'e class is a closer approximation to $\Gamma$ than $\fT$, and once its measures are determined it is not difficult to go to $\Gamma$.

(b) Trees of level $ \le 2$ are not very interesting: there are only three, namely, those with at most two leaves. However, there are interesting ``enhanced trees'' of level $\le 2$, such as
\begin{displaymath}
\tikz{
\node[leaf] (x) at (-.5,0) {};
\node[boron] (A) at (0,0) {};
\node[boron] (B) at (.5, 0) {};
\node[boron] (C) at (1, 0) {};
\node[boron] (D) at (1.5, 0) {};
\node[boron] (E) at (2, 0) {};
\node[leaf] (y) at (2.5, 0) {};
\path[draw] (A)--(x);
\path[draw] (A)--(B);
\path[draw] (B)--(C);
\path[draw] (C)--(D);
\path[draw] (D)--(E);
\path[draw] (E)--(y);
}
\end{displaymath}
Such a tree has at most two vertices; by fixing one, the set of nodes inherits a total order. We thus see that this Fra\"iss\'e class is closely related to the Fra\"iss\'e class of totally ordered sets. If we redefine $\cD_k(n)$ using enhanced trees, then $\cD_k(2)$ is the $\bZ/2$-equivariantization of the Delannoy category \cite{line}, and $\cD_k(n)$ is unchanged for $n \ge 3$. We can similarly redefine $\cC_k(t)$, though this leads to the same category. We thus see that the semi-simplification of $\cC_k(2)$ is the $\bZ/2$-equivariantization of the Delannoy category.

(c) The measure $\mu_t$ on $\fT$ is defined for $t=0$ and $t=\infty$. These measures are not regular, but they are quasi-regular in the sense of \cite[Definition~3.22]{repst}. This suggests that we should be able to construct abelian envelopes for $\cC_k(0)$ and $\cC_k(\infty)$ by the methods of \cite{repst}. These envelopes should be non-semi-simple pre-Tannakian categories. We have not attempted to carry out the details here.

(d) Fix a field $k$ of characteristic~0. Let $\bT=(T_1, \ldots, T_r)$ be a tuple of trees, each of level $\le \ell$. By Corollary~\ref{cor:AB-isom}, we have $B^{\bT}_k(n) \cong A^{\bT}_k(n)$ for $n \ge 2 \ell$, and this algebra is the specialization of the algebra $A^{\bT}_{k[\tau]}(\tau)$ at $\tau=n$. Let $\bigoplus_{i=1}^r L_i^{\oplus m_i}$ be the simple decomposition of $\Vec_{\bT}$ in the ``generic'' category $\cC_{k(\tau)}(\tau)$. By the Tits deformation theorem, the $L_i$'s can be specialized to distinct simple objects $L_i(n)$ in $\cD_k(n)$, and $\bigoplus_{i=1}^r L_i(n)^{\oplus m_i}$ is the simple decomposition of $\Vec_{\bT}$ in $\cD_k(n)$. Moreover, the categorical dimension of $L_i(n)$ is the value at $n$ of the categorical dimension of $L_i$ (which is a rational function of $\tau$). This shows that the categories $\cD_k(n)$ exhibit the phenomenon of representation stability, just like the symmetric groups (see \cite{ChurchFarb}).

\section{Some examples} \label{s:example}

\def\TreeOne{\scalebox{.75}{
\begin{tikzpicture}[baseline=0pt]
\node[leaf,label={\tiny a}] (A) at (0,0) {};
\end{tikzpicture}}}

\def\TreeTwo{\scalebox{.75}{
\begin{tikzpicture}[baseline=0pt]
\node[leaf,label={\tiny a}] (A) at (0,0) {};
\node[leaf,label={\tiny b}] (B) at (1,0) {};
\draw (A)--(B);
\end{tikzpicture}}}

\def\TreeThree{\scalebox{.75}{
\begin{tikzpicture}[baseline=0pt]
\node[boron] (Y) at (1,0) {};
\node[leaf,label={\tiny a}] (A) at (0,0) {};
\node[leaf,label={\tiny b}] (B) at (1.766,.642) {};
\node[leaf,label={\tiny c}] (C) at (1.766,-.642) {};
\draw (Y)--(A);
\draw (Y)--(B);
\draw (Y)--(C);
\end{tikzpicture}}}

\def\TreeFour{\scalebox{.75}{
\begin{tikzpicture}[baseline=0pt]
\node[boron] (X) at (0,0) {};
\node[leaf,label={\tiny a}] (A) at (1,0) {};
\node[leaf,label=left:{\tiny b}] (B) at (0,1) {};
\node[leaf,label={\tiny c}] (C) at (-1,0) {};
\node[leaf,label=left:{\tiny d}] (D) at (0,-1) {};
\draw (X)--(A);
\draw (X)--(B);
\draw (X)--(C);
\draw (X)--(D);
\end{tikzpicture}}}

\def\TreeFive{\scalebox{.75}{
\begin{tikzpicture}[baseline=0pt]
\node[boron] (X) at (0,0) {};
\node[boron] (Y) at (1,0) {};
\node[leaf,label={\tiny a}] (A) at (-.766,.642) {};
\node[leaf,label={\tiny b}] (B) at (-.766,-.642) {};
\node[leaf,label={\tiny c}] (C) at (1.766,.642) {};
\node[leaf,label={\tiny d}] (D) at (1.766,-.642) {};
\draw (X)--(Y);
\draw (X)--(A);
\draw (X)--(B);
\draw (Y)--(C);
\draw (Y)--(D);
\end{tikzpicture}}}

\def\TreeSix{\scalebox{.75}{
\begin{tikzpicture}[baseline=0pt]
\node[boron] (X) at (0,0) {};
\node[leaf,label={\tiny a}] (A) at (.951,.309) {};
\node[leaf,label={\tiny b}] (B) at (0,1) {};
\node[leaf,label={\tiny c}] (C) at (-.951,.309) {};
\node[leaf,label={\tiny d}] (D) at (-.588,-.809) {};
\node[leaf,label={\tiny e}] (E) at (.588,-.809) {};
\draw (X)--(A);
\draw (X)--(B);
\draw (X)--(C);
\draw (X)--(D);
\draw (X)--(E);
\end{tikzpicture}}}

\def\TreeSeven{\scalebox{.75}{
\begin{tikzpicture}[baseline=0pt]
\node[boron] (X) at (0,0) {};
\node[boron] (Y) at (1,0) {};
\node[leaf,label={\tiny a}] (A) at (-.5,.866) {};
\node[leaf,label={\tiny b}] (B) at (-1,0) {};
\node[leaf,label={\tiny c}] (C) at (-.5,-.866) {};
\node[leaf,label={\tiny d}] (D) at (1.766,.642) {};
\node[leaf,label={\tiny e}] (E) at (1.766,-.642) {};
\draw (X)--(Y);
\draw (X)--(A);
\draw (X)--(B);
\draw (X)--(C);
\draw (Y)--(D);
\draw (Y)--(E);
\end{tikzpicture}}}

\def\TreeEight{\scalebox{.75}{
\begin{tikzpicture}[baseline=0pt]
\node[boron] (X1) at (0,0) {};
\node[boron] (X2) at (1,0) {};
\node[boron] (X3) at (2,0) {};
\node[leaf,label={\tiny a}] (A) at (-.766,.642) {};
\node[leaf,label={\tiny b}] (B) at (-.766,-.642) {};
\node[leaf,label=left:{\tiny c}] (C) at (1,1) {};
\node[leaf,label={\tiny d}] (D) at (2.766,.642) {};
\node[leaf,label={\tiny e}] (E) at (2.766,-.642) {};
\draw (X1)--(X2);
\draw (X2)--(X3);
\draw (X1)--(A);
\draw (X1)--(B);
\draw (X2)--(C);
\draw (X3)--(D);
\draw (X3)--(E);
\end{tikzpicture}}}

\def\TreeNine{\scalebox{.75}{
\begin{tikzpicture}[baseline=0pt]
\node[boron] (X) at (0,0) {};
\node[leaf,label={\tiny a}] (A) at (1,0) {};
\node[leaf,label={\tiny b}] (B) at (.5,.866) {};
\node[leaf,label={\tiny c}] (C) at (-.5,.866) {};
\node[leaf,label={\tiny d}] (D) at (-1,0) {};
\node[leaf,label={\tiny e}] (E) at (-.5,-.866) {};
\node[leaf,label={\tiny f}] (F) at (.5,-.866) {};
\draw (X)--(A);
\draw (X)--(B);
\draw (X)--(C);
\draw (X)--(D);
\draw (X)--(E);
\draw (X)--(F);
\end{tikzpicture}}}

\def\TreeTen{\scalebox{.75}{
\begin{tikzpicture}[baseline=0pt]
\node[boron] (X) at (0,0) {};
\node[boron] (Y) at (1,0) {};
\node[leaf,label={\tiny a}] (A) at (-.766,.642) {};
\node[leaf,label={\tiny b}] (B) at (-.766,-.642) {};
\node[leaf,label=right:{\tiny c}] (C) at (1.5,.866) {};
\node[leaf,label=right:{\tiny d}] (D) at (1.94,.342) {};
\node[leaf,label=right:{\tiny e}] (E) at (1.94,-.342) {};
\node[leaf,label=right:{\tiny f}] (F) at (1.5,-.866) {};
\draw (X)--(Y);
\draw (X)--(A);
\draw (X)--(B);
\draw (Y)--(C);
\draw (Y)--(D);
\draw (Y)--(E);
\draw (Y)--(F);
\end{tikzpicture}}}

\def\TreeEleven{\scalebox{.75}{
\begin{tikzpicture}[baseline=0pt]
\node[boron] (X) at (0,0) {};
\node[boron] (Y) at (1,0) {};
\node[leaf,label=left:{\tiny a}] (A) at (-.5,.866) {};
\node[leaf,label=left:{\tiny b}] (B) at (-1,0) {};
\node[leaf,label=left:{\tiny c}] (C) at (-.5,-.866) {};
\node[leaf,label=right:{\tiny d}] (D) at (1.5,.866) {};
\node[leaf,label=right:{\tiny e}] (E) at (2,0) {};
\node[leaf,label=right:{\tiny f}] (F) at (1.5,-.866) {};
\draw (X)--(Y);
\draw (X)--(A);
\draw (X)--(B);
\draw (X)--(C);
\draw (Y)--(D);
\draw (Y)--(E);
\draw (Y)--(F);
\end{tikzpicture}}}

\def\TreeTwelve{\scalebox{.75}{
\begin{tikzpicture}[baseline=0pt]
\node[boron] (X) at (0,0) {};
\node[boron] (Y) at (1,0) {};
\node[boron] (Z) at (2,0) {};
\node[leaf,label={\tiny a}] (A) at (-.766,.642) {};
\node[leaf,label={\tiny b}] (B) at (-.766,-.642) {};
\node[leaf,label=left:{\tiny c}] (C) at (1,1) {};
\node[leaf,label=left:{\tiny d}] (D) at (1,-1) {};
\node[leaf,label={\tiny e}] (E) at (2.766,.642) {};
\node[leaf,label={\tiny f}] (F) at (2.766,-.642) {};
\draw (X)--(Y);
\draw (Y)--(Z);
\draw (X)--(A);
\draw (X)--(B);
\draw (Y)--(C);
\draw (Y)--(D);
\draw (Z)--(E);
\draw (Z)--(F);
\end{tikzpicture}}}

\def\TreeThirteen{\scalebox{.75}{
\begin{tikzpicture}[baseline=0pt]
\node[boron] (X) at (0,0) {};
\node[boron] (Y) at (1,0) {};
\node[boron] (Z) at (2,0) {};
\node[leaf,label={\tiny a}] (A) at (-.5,.866) {};
\node[leaf,label={\tiny b}] (B) at (-1,0) {};
\node[leaf,label={\tiny c}] (C) at (-.5,-.866) {};
\node[leaf,label=left:{\tiny d}] (D) at (1,1) {};
\node[leaf,label={\tiny e}] (E) at (2.766,.642) {};
\node[leaf,label={\tiny f}] (F) at (2.766,-.642) {};
\draw (X)--(Y);
\draw (Y)--(Z);
\draw (X)--(A);
\draw (X)--(B);
\draw (X)--(C);
\draw (Y)--(D);
\draw (Z)--(E);
\draw (Z)--(F);
\end{tikzpicture}}}

\def\TreeFourteen{\scalebox{.75}{
\begin{tikzpicture}[baseline=0pt]
\node[boron] (X1) at (0,0) {};
\node[boron] (X2) at (1,0) {};
\node[boron] (X3) at (2,0) {};
\node[boron] (X4) at (3,0) {};
\node[leaf,label={\tiny a}] (A) at (-.766,.642) {};
\node[leaf,label={\tiny b}] (B) at (-.766,-.642) {};
\node[leaf,label=left:{\tiny c}] (C) at (1,1) {};
\node[leaf,label=left:{\tiny d}] (D) at (2,1) {};
\node[leaf,label={\tiny e}] (E) at (3.766,.642) {};
\node[leaf,label={\tiny f}] (F) at (3.766,-.642) {};
\draw (X1)--(X2);
\draw (X2)--(X3);
\draw (X3)--(X4);
\draw (X1)--(A);
\draw (X1)--(B);
\draw (X2)--(C);
\draw (X3)--(D);
\draw (X4)--(E);
\draw (X4)--(F);
\end{tikzpicture}}}

\def\TreeFifteen{\scalebox{.75}{
\begin{tikzpicture}[baseline=0pt]
\node[boron] (X1) at (0,0) {};
\node[boron] (X2) at (1,0) {};
\node[boron] (X3) at (2,0) {};
\node[boron] (X4) at (1,1) {};
\node[leaf,label={\tiny a}] (A) at (-.766,.642) {};
\node[leaf,label={\tiny b}] (B) at (-.766,-.642) {};
\node[leaf,label=left:{\tiny c}] (C) at (.357,1.766) {};
\node[leaf,label=right:{\tiny d}] (D) at (1.642,1.766) {};
\node[leaf,label={\tiny e}] (E) at (2.766,.642) {};
\node[leaf,label={\tiny f}] (F) at (2.766,-.642) {};
\draw (X1)--(X2);
\draw (X2)--(X3);
\draw (X2)--(X4);
\draw (X1)--(A);
\draw (X1)--(B);
\draw (X4)--(C);
\draw (X4)--(D);
\draw (X3)--(E);
\draw (X3)--(F);
\end{tikzpicture}}}

\begin{figure}
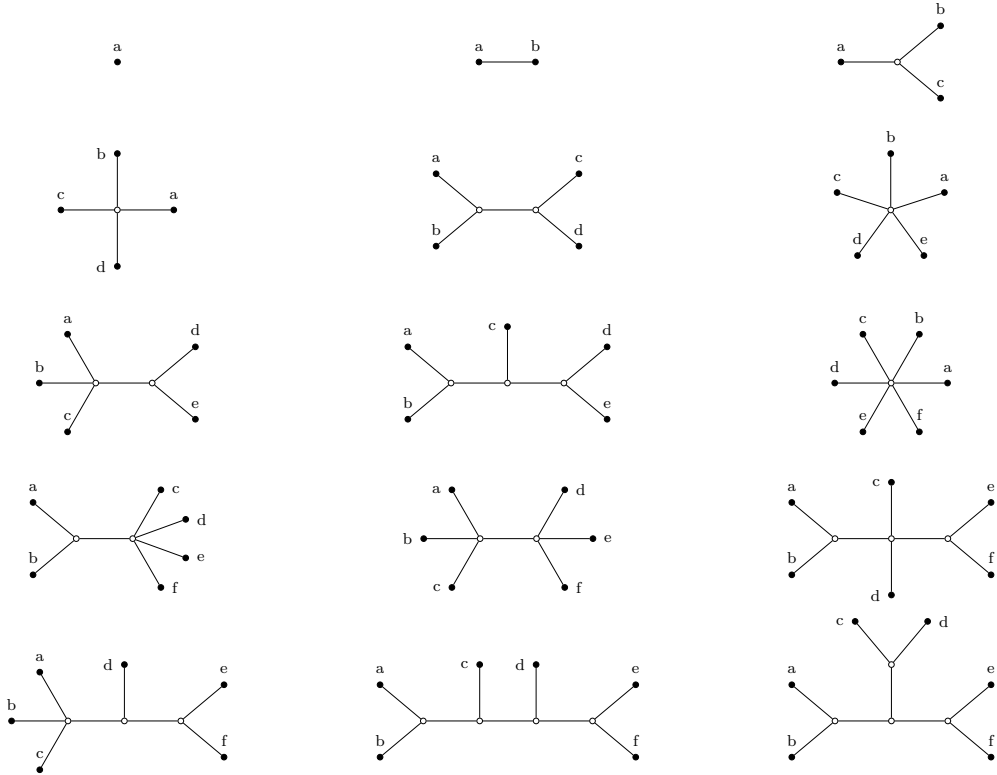

\def\arraystretch{4}
\setlength{\tabcolsep}{2em}
\begin{center}
\begin{tabular}{ccc}
\TreeOne & \TreeTwo & \TreeThree \\
\TreeFour & \TreeFive & \TreeSix \\
\TreeSeven & \TreeEight & \TreeNine \\
\TreeTen & \TreeEleven & \TreeTwelve \\
\TreeThirteen & \TreeFourteen & \TreeFifteen
\end{tabular}
\end{center}
\caption{Trees used in \S \ref{s:example}. We label these $T_1, \ldots, T_{15}$, starting at the top left and reading left to right.} \label{fig:trees}
\end{figure}

\subsection{Overview}

Fix an algebraically closed field $k$ of characteristic~0 and an element $t \in k \setminus \bN$ for the duration of \S \ref{s:example}. Thus the category $\cC_k(t)$ is semi-simple by Theorem~\ref{thm:nonsing}. In this section, we will find the simple decomposition of two basic objects in $\cC_k(t)$, and in doing so construct seven simple objects. All computations in this section take place in the category $\cC_k(t)$.

We will use the trees $T_1, \ldots, T_{15}$ depicted in Figure~\ref{fig:trees} throughout \S \ref{s:example}; we also let $T_0$ denote the empty tree. We often need to label these trees in new ways. To do so, we will write, e.g., $T_2(1,2)$ for the tree $T_2$ but with labels substituted as $a \to 1$ and $b \to 2$. Put
\begin{displaymath}
X=\Vec_{T_1(1)}, \qquad Y=\Vec_{T_2(1,2)}.
\end{displaymath}
Our aim is to find the simple decompositions of $X$ and $Y$.

\subsection{Preliminary remarks} \label{ss:prelim}

We first make a few general remarks that will be useful in analyzing our examples. Let $T$ be a tree. Recall that the arboreal algebra $A^T_k(t)=\End(\Vec_T)$ has a basis consisting of self-amalgamations of $T$. If $T$ has vertices $1, \ldots, n$, we will label the vertices of a self-amalgamation by $n+1, \ldots, 2n$ in this section (whereas we used primes in Example~\ref{ex:arboreal}). If $U$ is such a self-amalgamation then the endomorphism $\phi_U$ has a categorical trace, which we denote by $\utr(\phi_U)$. This, it turns out, is equal to $\mu(T)$ if $\phi_U$ is the identity, and otherwise vanishes. The categorical dimension of $\Vec_T$, denoted $\udim(\Vec_T)$, is the categorical trace of the identity, and thus equal to $\mu(T)$.

Suppose $U$ is a self-amalgamation of $T$. We define the \defn{transpose} of $U$, denoted $U^t$, to be the self-amalgamation where the two index sets are transposed, i.e., $1 \leftrightarrow n+1$, $2 \leftrightarrow n+2$, and so on. This operation induces a transpose operation on the algebra $\End(X)$, via $\phi_U^t=\phi_{U^t}$. This satisfies the usual property, i.e., $(ab)^t=b^t a^t$.

The algebra $\End(\Vec_T)$ carries a trace pairing, defined by $\langle a, b \rangle = \utr(a b)$. This pairing is non-degenerate, and described explicitly as follows. Suppose $a=\phi_U$ and $b=\phi_V$, where $U$ and $V$ are self-amalgamations of $T$. If $V=U^t$ then $\langle a, b \rangle = \mu(U)$. Otherwise, $\langle a, b \rangle = 0$. See \cite[Proposition~7.23]{repst} for details.

Suppose $a$ is an endomorphism of $\Vec_T$. We then have an expression $a = \sum_{i=1}^n \alpha_i \phi_{U_i}$, where the $U_i$'s are the various self-amalgamations, and the $\alpha_i$'s are constants. We have $\alpha_i \cdot \mu(U_i) = \langle a, \phi_{U_i^t} \rangle$. In particular, if $a=\phi_V \cdot \phi_W$ is a product of two basis vectors then $\alpha_i = \mu(U_i)^{-1} \utr(\phi_{U_i^t} \phi_V \phi_W)$. We can thus use traces of triple products to determine the structure constants in arboreal algebras.

We say a bit more about traces of triple products, as they will play an important role in this section. Let $U$, $V$, and $W$ be three self-amalgamations of $T$. It will be convenient to use three blocks of indices, namely, $1, \ldots, n$, and $n+1, \ldots, 2n$, and $2n+1, \ldots, 3n$. We use the first two blocks in $U$, the first and third blocks in $V$, and the second and third blocks in $W$. Let $S$ be the set of all trees on $1, \ldots, 3n$ such that the induced tree on the first and second block is $U$, on the first and third is $V$, and on the second and third is $W$. Then $\utr(\phi_U \phi_V \phi_W)$ is the sum of the measures of the trees in $S$.

Let $L_0=\bbone=\Vec_{T_0}$ be the trivial representation. There is always a unique amalgamation of $T_0$ and $T$, and so the mapping space $\Hom(\Vec_{T_0}, \Vec_T)$ is one-dimensional. It follows that $L_0$ occurs with multiplicity one in $\Vec_T$. The idempotent of $\End(\Vec_T)$ projecting onto the trivial representation is the sum of all basis vectors, divided by $\mu(T)$.

\subsection{The decomposition of $X$}

There are two self-amalgamations of $T_1(1)$, namely, $T_1(1/2)$ and $T_2(1,2)$. Here $T_1(1/2)$ is the tree with a single vertex labeled by both~1 and~2. We thus see that $\End(X)$ is a two-dimensional algebra. As the trivial representation $L_0$ has multiplicity one in $X$, it follows that $X$ decomposes as $X=L_0 \oplus L_1$, where $L_1$ is a non-trivial simple. We have
\begin{displaymath}
\udim(X)=\mu(X) = \frac{t}{t-1}, \qquad \udim(L_1)=\udim(X)-1 = \frac{1}{t-1}.
\end{displaymath}

\subsection{Initial comments on $Y$}

The space $\End(Y)$ is 10-dimensional, with basis:
\begin{align*}
a_1 &= T_2(1/3, 2/4) & a_6 &= T_3(1, 2/4, 6) \\
a_2 &= T_2(1/4, 2/3) & a_7 &= T_5(1, 2, 3, 4) \\
a_3 &= T_3(1/3, 2, 4) & a_8 &= T_5(1, 3, 2, 4) \\
a_4 &= T_3(1/4, 2, 3) & a_9 &= T_5(1, 4, 2, 3) \\
a_5 &= T_3(1, 2/3, 4) & a_{10} &= T_4(1, 2, 3, 4)
\end{align*}
The element $a_1$ is the identity element of the algebra $\End(Z)$. The element $a_2$ is a ``permutation matrix.'' Left multiplication by $a_2$ switches 3 and 4, while right multiplication by $a_2$ switches 1 and 2. We let $Y_+$ and $Y_-$ be the $+1$ and $-1$ eigenspaces of $a_2$ on $Y$. Thus $Y$ decomposes as $Y_+ \oplus Y_-$, and so we have a decomposition
\begin{displaymath}
\End(Y) = \End(Y_+) \oplus \End(Y_-) \oplus \Hom(Y_+, Y_-) \oplus \Hom(Y_-, Y_+).
\end{displaymath}
The space $\End(Y_-)$ is three dimensional, with basis
\begin{align*}
b_1 &= \tfrac{1}{2} ( a_1 - a_2 ) \\
b_2 &= \tfrac{1}{4} ( a_3 - a_4 - a_5 + a_6) \\
b_3 &= \tfrac{1}{2} ( a_8 - a_9 )
\end{align*}
while the space $\End(Y_+)$ is five dimensional, with basis
\begin{align*}
c_1 &= \tfrac{1}{2} (a_1+a_2) & c_4 &= \tfrac{1}{2} (a_8+a_9) \\
c_2 &= \tfrac{1}{4} (a_3+a_4+a_5+a_6) & c_5 &= a_{10} \\
c_3 &= a_7
\end{align*}
The spaces $\Hom(Y_+, Y_-)$ and $\Hom(Y_-, Y_+)$ are each one dimensional. We thus see that $Y_+$ and $Y_-$ have a single simple object in common, and that $Y_-$ is multiplicity-free.

The space $\Hom(X,Y)$ is three dimension, with basis
\begin{displaymath}
f_1=T_2(\ast/1,2), \qquad f_2=T_2(1,\ast/2), \qquad f_3=T_3(\ast,1,2).
\end{displaymath}
Here we use the index~$\ast$ on the source, and the indices~1 and~2 on the target. This space admits a $\bZ/2$ action, via the action of $a_2$ on $Y$. This action transposes $f_1$ and $f_2$, and fixes $f_3$. We thus see that $\Hom(X,Y_+)$ is 2-dimensional, with basis $f_1+f_2$ and $f_3$, while $\Hom(X,Y_-)$ is 1-dimensional, with basis $f_1-f_2$.

We know that $L_0$ occurs with multiplicity one in $Y$, and it is easily seen to occur in $Y_+$. From the dimension computations above, we see that $L_1$ occurs with multiplicity one in $Y_+$ and $Y_-$. Since $\End(Y_+)$ is 5-dimensional and $Y_+$ contains the two simples $L_0$ and $L_1$ with multiplicity one, it follows that $Y_+$ is multiplicity-free. We have already seen that $Y_-$ is multiplicity-free. We thus have
\begin{align*}
Y_+ &= L_0 \oplus L_1 \oplus L_2 \oplus L_3 \oplus L_4 \\
Y_- &= L_1 \oplus L_5 \oplus L_6
\end{align*}
where $L_0, \ldots, L_6$ are distinct simples, and $L_0$ and $L_1$ are the two simples already found. (Recall that since $\Hom(Y_+,Y_-)$ is 1-dimensional, there is only one common simple between $Y_+$ and $Y_-$, and we have found it to be $L_1$.)

\subsection{Decomposition of $Y_-$}

We have determined the decomposition of $Y_-$ in an abstract sense. We now aim to make this explicit. The following is our main result:

\begin{proposition} \label{prop:Yminus}
The elements
\begin{align*}
e_1 &= \frac{2(t-1)}{t}(b_1+b_2) \\
e_5 &= -\frac{(t-1)^2}{t}((t-2)b_1+2(t-1)b_2+tb_3) \\
e_6 &= (t-2)^2 b_1 + 2(t-1)(t-2) b_2 + (t-1)^2 b_3
\end{align*}
are the idempotents of $\End(Y_-)$ projecting on $L_1$, $L_6$, and $L_6$.
\end{proposition}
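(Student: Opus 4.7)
The approach is to compute the full multiplication table of $\End(Y_-)$ in the basis $b_1, b_2, b_3$, then verify directly that $e_1$, $e_5$, $e_6$ form a complete set of pairwise orthogonal idempotents, and finally pin down which one corresponds to $L_1$.

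First we carry out the products $b_i b_j$. Using the recipe from \S\ref{ss:prelim}, the coefficient of $\phi_U$ in $\phi_V \phi_W$ is $\mu_t(U^t)^{-1}\cdot \utr(\phi_{U^t}\phi_V\phi_W)$, where the triple trace is the sum of $\mu_t$-values over triple self-amalgamations of $T_2$ whose three pairwise restrictions are $V$, $W$, and $U^t$. Such triple amalgamations are trees on at most six labeled leaves with prescribed induced subtrees on $\{1,2,3,4\}$, $\{1,2,5,6\}$, and $\{3,4,5,6\}$; the complete list is short and each measure is read off directly from Proposition~\ref{prop:mu-formula}. The $\bZ/2$-symmetry given by conjugation with $a_2$, together with the transpose involution on $\End(Y)$, roughly halves the casework and provides a useful consistency check. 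At the end one expresses each $b_i b_j$ in the basis $\{b_1,b_2,b_3\}$ (the $c$-coordinates must automatically vanish by the idempotent decomposition $Y=Y_+\oplus Y_-$).

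Once the structure constants are in hand, $\End(Y_-)$ is a three-dimensional commutative semi-simple $k$-algebra, and since $Y_- = L_1 \oplus L_5 \oplus L_6$ is a sum of three pairwise non-isomorphic simples, it is forced to be $k \times k \times k$ with a unique set of three primitive idempotents. It then suffices to verify algebraically that the stated $e_1, e_5, e_6$ satisfy $e_i^2 = e_i$, $e_i e_j = 0$ for $i \neq j$, and $e_1 + e_5 + e_6 = b_1$ (the identity of $\End(Y_-)$). These reduce to a handful of rational identities in $t$, each of which follows mechanically from the multiplication table.

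To identify $e_1$ as the projection onto $L_1$, we exploit the one-dimensional space $\Hom(X, Y_-)$, spanned by $f_1 - f_2$. Since $X = L_0 \oplus L_1$ and $L_0$ does not appear in $Y_-$, this morphism factors through $L_1$, so its image is the $L_1$-isotypic component of $Y_-$. By rigidity and semi-simplicity, $\Hom(Y_-, X)$ is also one-dimensional, and the composition $Y_- \to X \to Y_-$ of a non-zero map with $f_1-f_2$ must be a non-zero scalar multiple of the idempotent projecting onto $L_1$. Expanding this composition in the basis $b_1, b_2, b_3$ via the same trace formalism and matching with $e_1$ completes the identification; the labels $e_5$ and $e_6$ are then a matter of convention, or can be distinguished by comparing $\utr(e_i)$ with the predicted categorical dimensions of $L_5$ and $L_6$. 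The main obstacle is the first step: enumerating triple self-amalgamations of $T_2$ and tallying $\mu_t$-values without error, as all subsequent steps reduce to linear algebra over $k(t)$.
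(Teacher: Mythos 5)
Your proposal is correct and would yield a valid proof; the main points are sound, and in particular the factorization-through-$X$ idea for pinning down the $L_1$-projector is exactly the one the paper uses. The difference lies in how you extract the algebra structure of $\End(Y_-)$. You propose computing the full multiplication table $b_i b_j$ from scratch via triple traces and then checking the idempotent identities directly. The paper organizes the computation more economically: it first shows (via the factorization through $X$) that $b_1+b_2$ is a scalar multiple of a primitive idempotent, and that this scalar is $t/2(t-1)$ by a single trace computation; then it deduces $(b_1+b_2)b_3=0$ essentially for free from the trace-orthogonality of $b_3$ with $b_1,b_2$ (no triple-trace computation needed); this also determines $b_2^2$ and $b_2 b_3$ implicitly, so the only product requiring a genuine amalgamation-counting argument is $b_3^2$, for which the paper computes $\utr(a_8^3)$ and $\utr(a_8^2 a_9)$. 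From there the minimal polynomial $b_3(b_3-1)(b_3-\xi)$ of $b_3$ drops out, and the three idempotents are simply the Lagrange interpolation polynomials in $b_3$, from which your displayed formulas follow. So the paper derives the idempotent formulas rather than merely verifying them, and sidesteps several of the amalgamation enumerations you would need. Both routes reach the same place; yours is more brute-force and carries a higher risk of bookkeeping error in the triple-trace tallies, which you correctly flag as the main obstacle.
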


Note that prior to this proposition, we have not distinguished $L_5$ and $L_6$. Thus the proposition is more properly stated as: $e_1$, $e_5$, and $e_6$ are the primitive idempotents of $\End(Y_-)$, and $e_1$ projects onto $L_1$. Taking traces, we find:

\begin{corollary}
We have
\begin{displaymath}
\udim(L_5) = -\tfrac{1}{2} (t-2), \quad
\udim(L_6) = \frac{t(t-2)^2}{2(t-1)^2}.
\end{displaymath}
\end{corollary}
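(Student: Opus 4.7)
The plan is to compute the categorical dimensions directly from the idempotent formulas in Proposition~\ref{prop:Yminus}, using the basic trace formula recalled in \S\ref{ss:prelim}: if $U$ is a self-amalgamation of a tree $T$ then $\utr(\phi_U)$ equals $\mu(T)$ when $\phi_U$ is the identity, and $0$ otherwise. Since $\udim(L_i)$ is the trace of the primitive idempotent projecting onto $L_i$, the task reduces to evaluating $\utr(e_5)$ and $\utr(e_6)$.

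First I would compute $\mu(Y) = \mu(T_2)$ using Proposition~\ref{prop:mu-formula}. The tree $T_2$ has $\ell(T_2)=2$ leaves, $n(T_2)=0$ nodes, and empty node-product, giving
\begin{displaymath}
\mu(Y) = \frac{t}{(t-1)^2}.
\end{displaymath}
Next I would compute the traces of $b_1,b_2,b_3$. Among the ten basis elements $a_1,\dots,a_{10}$, only $a_1$ is the identity self-amalgamation of $Y$, so $\utr(a_1)=\mu(Y)$ and $\utr(a_j)=0$ for $j\ge 2$. From the formulas $b_1=\tfrac{1}{2}(a_1-a_2)$, $b_2=\tfrac{1}{4}(a_3-a_4-a_5+a_6)$, $b_3=\tfrac{1}{2}(a_8-a_9)$, this gives $\utr(b_1)=\tfrac{1}{2}\mu(Y)=\tfrac{t}{2(t-1)^2}$ while $\utr(b_2)=\utr(b_3)=0$.

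Finally I would substitute into the expressions for $e_5$ and $e_6$ from Proposition~\ref{prop:Yminus}. Only the $b_1$-term contributes, so
\begin{displaymath}
\udim(L_5) = \utr(e_5) = -\frac{(t-1)^2}{t}\cdot(t-2)\cdot\frac{t}{2(t-1)^2} = -\frac{t-2}{2},
\end{displaymath}
and
\begin{displaymath}
\udim(L_6) = \utr(e_6) = (t-2)^2\cdot\frac{t}{2(t-1)^2} = \frac{t(t-2)^2}{2(t-1)^2},
\end{displaymath}
yielding the claimed values. There is essentially no obstacle here: the proposition has already done the hard work of exhibiting the primitive idempotents explicitly, and the corollary is a one-line trace computation made possible by the fact that the trace kills every non-identity basis vector.
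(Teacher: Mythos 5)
Your proposal is correct and is precisely what the paper means by its terse "Taking traces, we find": once Proposition~\ref{prop:Yminus} has exhibited the primitive idempotents, the only basis element of $\End(Y)$ with nonzero categorical trace is the identity $a_1$, so $\utr(b_1)=\tfrac12\mu(Y)=\tfrac{t}{2(t-1)^2}$ while $\utr(b_2)=\utr(b_3)=0$, and the stated dimensions drop out of the $b_1$-coefficients in $e_5$ and $e_6$. No gaps.
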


To prove the proposition, we show that $\End(Y_-)$ is generated by $b_3$, and determine the minimal polynomial of $b_3$; this will give an explicit description of $\End(Y_-)$, from which the proposition easily follows. The details are carried out in the following lemmas.

\begin{lemma} \label{lem:idemp}
$e_1$ is the projector onto $L_1$.
\end{lemma}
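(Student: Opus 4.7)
The plan is to identify $e_1$ as the primitive idempotent of $\End(Y_-)$ projecting onto the $L_1$-summand. Since $Y_-\cong L_1\oplus L_5\oplus L_6$ with pairwise nonisomorphic simples, $\End(Y_-)$ is commutative and isomorphic to $k^3$, so every idempotent is a sum of a subset of the three primitive idempotents and is determined by the summand onto which it projects. It therefore suffices to show (i) $e_1$ is an idempotent, (ii) $\utr(e_1)=\udim(L_1)=1/(t-1)$, and (iii) there is a nonzero morphism $L_1\to Y_-$ fixed by $e_1$.

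For (ii), recall from \S\ref{ss:prelim} that $\utr(\phi_U)=\mu_t(T_2)=t/(t-1)^2$ when $U$ is the identity self-amalgamation $a_1$, and vanishes for every other basis element. Expanding $e_1=\tfrac{t-1}{t}(a_1-a_2)+\tfrac{t-1}{2t}(a_3-a_4-a_5+a_6)$ shows the coefficient of $a_1$ is $(t-1)/t$, so $\utr(e_1)=\tfrac{t-1}{t}\cdot\tfrac{t}{(t-1)^2}=1/(t-1)$, matching $\udim(L_1)$.

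For (iii), I would use the morphism $g=f_1-f_2\in\Hom(X,Y_-)$. Because $L_0$ appears in $Y_+$ but not in $Y_-$, the composition $L_0\hookrightarrow X\xrightarrow{g}Y_-$ is zero; hence $g$ restricts to an embedding of the simple summand $L_1\subset X$ into $Y_-$, realizing the unique (up to scalar) copy of $L_1$ there. I would then verify $e_1\circ g=g$ by direct expansion, which reduces to the four compositions $b_i\circ f_j$ with $i\in\{1,2\}$ and $j\in\{1,2\}$; each is a short sum over amalgamations of $T_1$ and $T_2$ with weights supplied by $\mu_t$ via the composition rule of \S\ref{ss:prelim}.

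The hardest step is (i). Writing $e_1^2=\tfrac{4(t-1)^2}{t^2}(b_1+b_2)^2$ reduces the problem to computing the four products $b_1^2$, $b_1b_2$, $b_2b_1$, $b_2^2$, and hence the products $a_ia_j$ for $i,j\in\{1,\ldots,6\}$. Each such product expands as $\phi_{a_i}\phi_{a_j}=\sum_W\mu_t(W)^{-1}\utr(\phi_{W^t}\phi_{a_i}\phi_{a_j})\phi_W$, where $W$ runs over the ten self-amalgamations of $T_2$; the triple trace $\utr(\phi_{W^t}\phi_{a_i}\phi_{a_j})$ is obtained by enumerating the triple self-amalgamations of $T_2$ realizing the prescribed three pairwise amalgamations and summing their $\mu_t$-values via Proposition~\ref{prop:mu-formula}. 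The combinatorial enumeration is the main obstacle: individually mechanical, collectively lengthy. Once the multiplication table is in hand, $e_1^2=e_1$ is a linear-algebra check, and properties (i)--(iii) together force $e_1$ to equal the primitive idempotent projecting onto $L_1$.
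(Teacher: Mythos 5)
Your plan is logically sound (modulo one small observation you would need to add: to conclude from $\utr(e_1)=\udim(L_1)$ and $e_1\supseteq L_1$ that $e_1$ is exactly the $L_1$-projector, you need $\udim(L_5)\ne 0$, $\udim(L_6)\ne 0$, which follows from semisimplicity, and $e_1\ne\id_{Y_-}$, which is immediate from linear independence of $b_1,b_2,b_3$). However, your route is substantially more laborious than the paper's, and the extra labor is avoidable. You treat idempotency of $e_1$ as the ``hardest step'' and propose to establish it by computing the full multiplication table of $\End(Y_-)$ via triple traces. The paper sidesteps this entirely: it observes that $e'=\tfrac14(1-a_2)\,f'f\,(1-a_2)$ factors through $X=L_0\oplus L_1$, and since $L_0$ does not occur in $Y_-$, this alone forces $e'$ to be a \emph{scalar multiple} of the projector onto $L_1$ --- no idempotency check is needed. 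The composition $f'f$ is then a one-line amalgamation count (only two trees contribute), giving $e'=b_1+b_2$, and the scalar is pinned down by the same trace computation you propose in (ii). In short: you have the right ingredients (the map factoring through $X$, the trace of $e_1$), but you use the factoring-through-$X$ observation only as a verification step (iii), whereas the paper uses it as the main engine, rendering the full multiplication table unnecessary. Recognizing that a map factoring through a known semisimple object is automatically a linear combination of projectors onto its constituents is the key economy you are missing.
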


\begin{proof}
Let $f \colon Y \to X$ be the map corresponding to $T_2(1/*, 2)$ and let $f' \colon X \to Y$ be the map corresponding to $T_2(3/*, 4)$. Here we use coordinates $\{1,2\}$ on the first $Y$, $\{3,4\}$ on the second $Y$, and $*$ on $X$. Thus $e'=\tfrac{1}{4} (1-a_2)f' f(1-a_2)$ is an endomorphism of $Y_-$ that factors through $X$. It is thus necessarily a scalar multiple of the projector onto $L_1$.

To compute $f' f$ we must find all trees on $\{1,2,3,4,*\}$ that restrict to the trees giving $f$ and $f'$. There are two such trees, namely, $T_2(1/3/*, 2/4)$ and $T_3(1/3/*, 2, 4)$. To compute the actual composition, we delete $*$ and take the measure of the resulting inclusion. In both cases, this measure is just~1. Thus $f' f =a_1+a_3$, and so $e'=b_1+b_2$.

We thus see that $b_1+b_2$ of the projector onto $L_1$. We now determine the scalar. Write $(b_1+b_2)^2=\lambda \cdot (b_1+b_2)$. To compute $\lambda$, take traces. We have
\begin{displaymath}
\utr(b_1) = \frac{t}{2(t-1)^2}, \qquad
\utr(b_2) = 0, \qquad
\utr(b_2^2) = -\frac{t(t-2)}{4 (t-1)^3}.
\end{displaymath}
We thus find
\begin{displaymath}
\lambda = 1 +\utr(b_2^2)/\utr(b_1) = 1 - \frac{t-2}{2(t-1)} = \frac{t}{2(t-1)}.
\end{displaymath}
The result now follows, as $\lambda^{-1}(b_1+b_2)$ is the projector in question.
\end{proof}

\begin{lemma}
We have $(b_1+b_2)b_3=0$; equivalently, $b_2b_3=-b_3$.
\end{lemma}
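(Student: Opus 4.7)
The plan is to reduce the identity to the assertion that $f b_3 = 0$, where $f \colon Y \to X$ is the auxiliary morphism used in the proof of Lemma~\ref{lem:idemp}.

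Since $a_2^2 = 1$, the element $b_1 = \tfrac{1}{2}(a_1 - a_2)$ is the projector of $Y$ onto $Y_-$, so $b_1 b_3 = b_3$; the claim $(b_1+b_2) b_3 = 0$ is therefore equivalent to $b_2 b_3 = -b_3$. In the proof of Lemma~\ref{lem:idemp} it was shown that
\begin{displaymath}
b_1 + b_2 \;=\; \tfrac{1}{4}(1-a_2)\, f'\, f\, (1-a_2),
\end{displaymath}
where $f \colon Y \to X$ and $f' \colon X \to Y$ correspond to the amalgamations $T_2(1/\ast,2)$ and $T_2(3/\ast,4)$. Because $b_3 \in \End(Y_-)$ and $a_2$ acts on $Y_-$ by $-1$, we have $(1-a_2) b_3 = 2 b_3$, and hence $(b_1+b_2) b_3 = \tfrac{1}{2}(1-a_2)\, f'\, f\, b_3$. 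It therefore suffices to prove $f b_3 = 0$ in $\Hom(Y,X)$.

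To evaluate $f b_3 = \tfrac{1}{2}(f a_8 - f a_9)$, I would apply the composition rule of \S \ref{ss:uperm}, as in Example~\ref{ex:arboreal}. A short case analysis shows that the only reduced tree on $\{1,2,3,4,\ast\}$ whose restrictions to $\{1,2,3,4\}$ and $\{3,4,\ast\}$ equal $a_8 = T_5(1,3,2,4)$ and $T_2(3/\ast,4)$, respectively, is the one obtained from $a_8$ by adjoining $\ast$ as an extra label to the leaf $3$; restriction of this tree to $\{1,2,\ast\}$ is $f_3 = T_3(\ast,1,2)$, with coefficient $-(\tau-2)/(\tau-1)$ by Proposition~\ref{prop:mu-formula}. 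The analogous computation for $a_9 = T_5(1,4,2,3)$ yields the same amalgamation $f_3$ with the same coefficient, so $f a_8 = f a_9$ and thus $f b_3 = 0$, as required.

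The only real work is the tree enumeration needed to evaluate $f a_8$ and $f a_9$: one must verify that placing $\ast$ at any other position either violates the restriction condition on $\{3,4,\ast\}$ or fails to produce a valid reduced tree. Given that verification, the cancellation between the two terms is immediate and reflects the fact that $a_8$ and $a_9$ become indistinguishable once $\ast$ is identified with leaf $3$ and then restricted to $\{1,2,\ast\}$.
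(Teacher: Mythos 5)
Your proof is correct, but it takes a genuinely different route from the paper's. The paper argues purely structurally: since $b_1+b_2$ is a scalar multiple of the primitive idempotent $e_1$ (and $\End(Y_-)$ is commutative semisimple, as $Y_-$ is multiplicity-free), $(b_1+b_2)b_3$ must equal $\lambda(b_1+b_2)$ for some scalar $\lambda$; and since $a_8^t=a_8$, $a_9^t=a_9$, the element $b_3$ is orthogonal to $b_1$ and $b_2$ under the trace pairing, while $\utr(b_1+b_2)=\utr(b_1)\neq 0$, forcing $\lambda=0$. No tree enumeration beyond what was already done is needed. You instead exploit the explicit factorization $b_1+b_2=\tfrac{1}{4}(1-a_2)f'f(1-a_2)$ through $X$ that was established in the proof of Lemma~\ref{lem:idemp}, reduce the claim to $fb_3=0$, and then verify this by a direct (and correct) composition computation: identifying $\ast$ with the leaf $3$ is forced by the restriction condition on $\{3,4,\ast\}$, the resulting unique tree $Z$ has the same shape for $a_8$ and for $a_9$, and restricting to $\{1,2,\ast\}$ gives $f_3$ with the same coefficient $-\tfrac{t-2}{t-1}$ in both cases, so $fa_8=fa_9$. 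Your argument is a bit more computational but is also more self-contained — it does not invoke primitivity of $e_1$ or the multiplicity-freeness of $Y_-$ — while the paper's argument is shorter and illustrates the trace-pairing technique that is used repeatedly in that section. Both are valid proofs.
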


\begin{proof}
Since $b_1+b_2$ is a primitive idempotent, we have $(b_1+b_2)b_3 = \lambda (b_1+b_2)$ for some scalar $\lambda$. Since $b_3$ is orthogonal to $b_1$ and $b_2$ under the trace pairing, and $\utr(b_1+b_2) \ne 0$, we find $\lambda=0$, as required.
\end{proof}

\begin{lemma}
We have
\begin{displaymath}
\utr(a_8^3)=\frac{t(t-2)^2(2t-3)^2}{(t-1)^6}.
\end{displaymath}
\end{lemma}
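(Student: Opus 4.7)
The plan is to apply the triple-product trace recipe from \S \ref{ss:prelim}. With $a_8 = \phi_U$ for $U = T_5(1,3,2,4)$ and using the three index blocks $\{1,2\}, \{3,4\}, \{5,6\}$, one has
\begin{displaymath}
\utr(a_8^3) = \sum_S \mu(S),
\end{displaymath}
where $S$ ranges over tree structures on $\{1,\ldots,6\}$ whose restrictions to $\{1,2,3,4\}$, $\{1,2,5,6\}$, and $\{3,4,5,6\}$ are, respectively, $T_5(1,3,2,4)$, $T_5(1,5,2,6)$, and $T_5(3,5,4,6)$. The task reduces to enumerating these $S$ and summing their measures using Proposition~\ref{prop:mu-formula}.

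The key structural observation I would use is that the three prescribed splits $\{1,3\}|\{2,4\}$, $\{1,5\}|\{2,6\}$, $\{3,5\}|\{4,6\}$ are mutually compatible with the single bipartition $\{1,3,5\}|\{2,4,6\}$, and in fact any admissible $S$ must carry an internal edge realising this $3$--$3$ split. Granting this, I would then enumerate reduced $6$-leaf trees with a $3$--$3$ split; the candidate shapes are $T_{11}$, $T_{13}$, $T_{14}$, and $T_{15}$ (from Figure~\ref{fig:trees}). The tree $T_{15}$ can be excluded by a quick incidence argument: conditions (R1)--(R3) force each of the pairs $\{1,3\}$, $\{1,5\}$, $\{3,5\}$ to coincide with a pendant pair of $T_{15}$, but these three pairs share common leaves whereas the pendant pairs of $T_{15}$ are pairwise disjoint. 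For $T_{11}$ the bipartition already determines the tree uniquely ($1$ contribution). For $T_{13}$, whose internal spine is $X$--$Y$--$Z$ with $3$, $1$, $2$ leaves, I would verify that each of the three choices of the singleton node's label within either half of the bipartition gives an admissible labelling, yielding $3+3 = 6$ contributions. For $T_{14}$, a caterpillar with leaf counts $2,1,1,2$, the bipartition is carried by the middle edge and independent choices for the two singleton interior leaves give $3 \cdot 3 = 9$ admissible labellings. In each case I would confirm admissibility by a direct check on the Steiner subtree of each $4$-element restriction.

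Finally, Proposition~\ref{prop:mu-formula} yields
\begin{displaymath}
\mu(T_{11}) = \frac{t(t-3)^2(t-2)^2}{(t-1)^6}, \qquad
\mu(T_{13}) = -\frac{t(t-3)(t-2)^3}{(t-1)^6}, \qquad
\mu(T_{14}) = \frac{t(t-2)^4}{(t-1)^6}.
\end{displaymath}
Summing with multiplicities $1, 6, 9$ and factoring $t(t-2)^2/(t-1)^6$ leaves the bracket
\begin{displaymath}
(t-3)^2 - 6(t-3)(t-2) + 9(t-2)^2 = 4t^2 - 12t + 9 = (2t-3)^2,
\end{displaymath}
which produces the stated formula.

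The hard part will be the enumeration and verification: I need to be certain that no tree shape or labelling is overlooked and that each candidate $S$ truly produces $T_5$ (not the star $T_4$ or a larger tree) on each of the three restrictions. This amounts to careful bookkeeping of valences in each Steiner subtree, which is the only place the argument is not essentially formal.
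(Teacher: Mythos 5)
Your proof is correct, follows the same method as the paper (triple-product trace, enumerate the admissible six-leaf trees, sum measures via Proposition~\ref{prop:mu-formula}), and in fact it silently corrects an error in the paper's printed argument. The paper states that the admissible shapes are $T_{11}$, $T_{12}$, $T_{13}$ with multiplicities $1$, $9$, $6$; but $T_{12}$ (the three-node tree with leaf counts $2,2,2$) realizes no $3$--$3$ edge split, and a short case analysis shows it supports no admissible labeling at all. Moreover the value $\mu(T_{12})=t(t-2)^4/(t-1)^6$ used in the paper is actually the measure of $T_{14}$ (the four-node caterpillar, all nodes of valence $3$), not of $T_{12}$, whose measure is $-t(t-3)(t-2)^3/(t-1)^6 = \mu(T_{13})$. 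So the paper's ``$T_{12}$'' should read ``$T_{14}$'' throughout that proof, and your identification of $T_{11}$ ($\times 1$), $T_{13}$ ($\times 6$), $T_{14}$ ($\times 9$) is the correct one; the arithmetic then works out as you show.

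Two small remarks on the write-up. First, the centerpiece ``any admissible $S$ must carry an internal edge realizing $\{1,3,5\}\mid\{2,4,6\}$'' deserves a short justification — it follows from pairwise compatibility of edge splits of a tree (the halves $A_1\supset\{1,3\}$, $A_2\supset\{1,5\}$, $A_3\supset\{3,5\}$ that avoid the complementary pairs form a chain under inclusion, and a quick argument shows one of them must be exactly $\{1,3,5\}$). The paper does not make this observation, so it is a genuine conceptual improvement. Second, once the $3$--$3$ split lemma is granted, $T_{15}$ (whose three internal edges all realize $2$--$4$ splits) is automatically excluded, so the separate incidence argument you sketch for it is unnecessary; it is also slightly imprecise as worded, since the quartet conditions give, for each four-set, a choice of two possible pendant pairs rather than forcing a specific one, but the pigeonhole against the three disjoint pendant pairs still works after that extra case split.
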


\begin{proof}
Recall that $a_8=T_5(1,3,2,4)$. Let $S$ be the set of all trees labeled by $1, \ldots, 6$ such that when we delete 5 and 6 we obtain $T_5(1,3,2,4)$, when we delete 3 and 4 we obtain $T_5(5,1,6,2)$, and when we delete 1 and 2 we obtain $T_5(3,5,4,6)$. Then $\utr(a_8^3)$ is the sum of the measures of all trees in $S$. The set $S$ has 16 elements. Three representative examples are
\begin{displaymath}
T_{11}(1,3,5,2,4,6), \quad T_{12}(1,3,5,2,4,6), \quad T_{13}(1,3,5,2,4,6).
\end{displaymath}
In each case, one can permute 1, 3, and 5 arbitrarily, and also 2, 4, and 6; moreover, one can place 2, 4, 6 before 1, 3, 5. This results in one element of type $T_{11}$, nine of type $T_{12}$, and six of type $T_{13}$. We thus find
\begin{displaymath}
\utr(a_8^3) = \mu(T_{11}) + 9 \mu(T_{12}) + 6 \mu(T_{13}).
\end{displaymath}
We have
\begin{displaymath}
\mu(T_{11}) = \frac{t (t-2)^2 (t-3)^2}{(t-1)^6}, \quad
\mu(T_{12}) = \frac{t (t-2)^4}{(t-1)^6}, \quad
\mu(T_{13}) = -\frac{t (t-2)^3 (t-3)}{(t-1)^6}
\end{displaymath}
This yields the stated formula.
\end{proof}

\begin{lemma}
We have
\begin{displaymath}
\utr(a_8^2 a_9)=\frac{2 t(t-2)^4}{(t-1)^6}.
\end{displaymath}
\end{lemma}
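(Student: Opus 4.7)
The plan is to enumerate the set $S$ of labeled trees on the leaf set $\{1,\ldots,6\}$ whose restrictions to the three $4$-subsets $\{1,2,3,4\}$, $\{1,2,5,6\}$, and $\{3,4,5,6\}$ are $T_5(1,3,2,4)$, $T_5(1,5,2,6)$, and $T_5(3,6,4,5)$ respectively (equivalently, the central-edge splits $\{1,3\}\mid\{2,4\}$, $\{1,5\}\mid\{2,6\}$, and $\{3,6\}\mid\{4,5\}$), and then to sum $\mu_t(Z)$ over $Z\in S$ by the recipe recalled in \S \ref{ss:prelim}. The only change from the $\utr(a_8^3)$ setup is the third split, which now pairs $3$ with $6$ instead of with $5$; this breaks the ``odd/even'' symmetry that drove the earlier count and should collapse most contributions.

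I would classify candidates by their reduced six-leaf isomorphism type, which must be one of $T_9,\ldots,T_{15}$, and eliminate shape by shape. The star $T_9$ and the $2{+}4$ tree $T_{10}$ are out immediately, since their $4$-subset restrictions cannot all be $T_5$ (the common intersection of the three prescribed $4$-subsets is empty). For the $3{+}3$ shape $T_{11}$, the first two constraints force the bipartition $\{1,3,5\}\mid\{2,4,6\}$, but this yields split $\{3,5\}\mid\{4,6\}$ on $\{3,4,5,6\}$, violating the third constraint; so $T_{11}$ drops out (in contrast with its role in the $\utr(a_8^3)$ case). For each of $T_{12}$ ($2{+}2{+}2$ path), $T_{13}$ ($3{+}1{+}2$ path), and $T_{14}$ ($2{+}1{+}1{+}2$ path), I would enumerate the ways to distribute the six labels among the internal nodes and check the induced split on each $4$-subset; a routine but tedious case-check shows that no labeling of any of these three shapes satisfies all three constraints, because in each attempt the pair of labels omitted from one of the $4$-subsets either straddles the path in a way that collapses the restriction to $T_4$ or forces the wrong $T_5$ split. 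Finally, for the ``Y-tree'' $T_{15}$ with three two-leaf arms meeting at a central node, running through the fifteen unordered pair-partitions of $\{1,\ldots,6\}$ and testing each against the three split constraints leaves exactly two admissible arm-partitions: $\{\{1,3\},\{2,6\},\{4,5\}\}$ and $\{\{1,5\},\{2,4\},\{3,6\}\}$.

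Each admissible $T_{15}$-tree has measure $\mu_t(T_{15}) = t(t-2)^4/(t-1)^6$ by Proposition~\ref{prop:mu-formula} (four internal nodes all of valence three, six leaves), so summing the two contributions yields $2t(t-2)^4/(t-1)^6$, the asserted value. The main obstacle is the case analysis for the intermediate shapes $T_{12}$, $T_{13}$, $T_{14}$: each admits a nontrivial number of candidate labelings, and for every one of them one must examine three induced restrictions, account for possible degenerations to $T_4$, and compare splits to the prescribed ones. The clean payoff is that the asymmetry introduced by replacing $a_8$ with $a_9$ in one slot wipes out all contributions except the two $T_{15}$-trees listed above.
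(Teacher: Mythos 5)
Your proof is correct and follows the same approach as the paper: enumerate the set $S$ of labeled $6$-leaf trees meeting the three restriction constraints and sum their measures, concluding that $\utr(a_8^2a_9)=2\mu_t(T_{15})=2t(t-2)^4/(t-1)^6$. You place the $a_9$ factor on the block pair $\{3,4\},\{5,6\}$ rather than $\{1,2\},\{5,6\}$ as the paper does (equivalent by cyclicity of the trace, so your two admissible arm-partitions agree with the paper's $T_{15}(1,3,2,5,4,6)$ and $T_{15}(1,6,2,4,3,5)$ after swapping $5\leftrightarrow 6$), and your shape-by-shape elimination of $T_9,\ldots,T_{14}$ makes explicit the enumeration the paper leaves implicit in the assertion ``the set $S$ has two elements.''
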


\begin{proof}
This is similar to the previous computation, but now we want to obtain $T_5(5,2,6,1)$ when we delete 3 and 4. The set $S$ has two elements, namely,
\begin{displaymath}
T_{15}(1,3,2,5,4,6), \qquad T_{15}(1,6,2,4,3,5).
\end{displaymath}
We thus see that the trace is $2 \mu(T_{15})$, and so the result follows.
\end{proof}

\begin{lemma}
We have
\begin{displaymath}
b_3^2 = \frac{(t-2)^2}{(t-1)^2} b_1 + \frac{2(t-2)}{t-1} b_2 + \frac{2t^2-4t+1}{(t-1)^2} b_3
\end{displaymath}
\end{lemma}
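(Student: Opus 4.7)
The plan is to expand $b_3^2$ in the basis $\{b_1, b_2, b_3\}$ of $\End(Y_-)$ as $b_3^2 = \alpha b_1 + \beta b_2 + \gamma b_3$ and isolate the coefficients using the non-degenerate trace pairing. A key observation is that this pairing is diagonal on $\{b_1,b_2,b_3\}$: the elements $a_i$ appearing in each $b_k$ lie in one of the three disjoint blocks $\{a_1,a_2\}$, $\{a_3,a_4,a_5,a_6\}$, $\{a_8,a_9\}$, and transposition (\S\ref{ss:prelim}) preserves these blocks, so the cross-pairings $\utr(b_j b_k)$ vanish. Pairing the expansion against $b_1$, $b_2$, $b_3$ in turn yields
\[
\alpha = \frac{\utr(b_3^2 b_1)}{\utr(b_1^2)}, \qquad \beta = \frac{\utr(b_3^2 b_2)}{\utr(b_2^2)}, \qquad \gamma = \frac{\utr(b_3^3)}{\utr(b_3^2)}.
\]

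The numerators for $\alpha$ and $\beta$ simplify immediately. Because $b_1$ is the projector onto $Y_-$ it is the identity of $\End(Y_-)$, so $b_3^2 b_1 = b_3^2$. The preceding lemma gives $b_2 b_3 = -b_3$; since $b_2$ and $b_3$ are self-transpose (a direct check of the labelings), taking transposes gives $b_3 b_2 = -b_3$, whence $b_3^2 b_2 = -b_3^2$. Using the previously computed $\utr(b_1)=\utr(b_1^2) = t/2(t-1)^2$ and $\utr(b_2^2) = -t(t-2)/4(t-1)^3$, together with $\utr(b_3^2) = \tfrac{1}{2}\mu(T_5) = t(t-2)^2/2(t-1)^4$ (obtained from $b_3 = \tfrac{1}{2}(a_8-a_9)$ via the same orthogonality argument as for $\utr(b_2^2)$), one reads off $\alpha = (t-2)^2/(t-1)^2$ and $\beta = 2(t-2)/(t-1)$.

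The only remaining work is to compute $\utr(b_3^3)$. Expanding $\tfrac{1}{8}(a_8-a_9)^3$ and collapsing its eight terms by cyclic invariance of $\utr$ gives
\[
\utr(b_3^3) = \tfrac{1}{8}\bigl[\utr(a_8^3) - 3\utr(a_8^2 a_9) + 3\utr(a_8 a_9^2) - \utr(a_9^3)\bigr],
\]
and the two preceding lemmas supply $\utr(a_8^3)$ and $\utr(a_8^2 a_9)$. The main obstacle is to recover the other two triple traces without redoing a tree enumeration. I will handle this by a relabeling symmetry: the permutation $\sigma = (3,4)(5,6)$ of $\{1,\ldots,6\}$ is a measure-preserving bijection on labeled trees. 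On the three restriction windows $\{1,2,3,4\}$, $\{1,2,5,6\}$, $\{3,4,5,6\}$ used in the triple-trace formula of \S\ref{ss:prelim}, $\sigma$ swaps $a_8 \leftrightarrow a_9$ on the first two, but fixes \emph{both} $a_8$ and $a_9$ on the third, since the unordered pair-sets $\{\{3,5\},\{4,6\}\}$ and $\{\{3,6\},\{4,5\}\}$ attached to these two amalgamations are each $\sigma$-invariant. Applying $\sigma$ to the restriction triples $(a_8,a_8,a_8)$ and $(a_8,a_8,a_9)$ produces $(a_9,a_9,a_8)$ and $(a_9,a_9,a_9)$ respectively, giving $\utr(a_8 a_9^2) = \utr(a_8^3)$ and $\utr(a_9^3) = \utr(a_8^2 a_9)$ after cyclic permutation inside the trace.

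Substituting back, $\utr(b_3^3) = \tfrac{1}{2}\bigl(\utr(a_8^3) - \utr(a_8^2 a_9)\bigr)$. The identity $(2t-3)^2 - 2(t-2)^2 = 2t^2 - 4t + 1$ then yields $\utr(b_3^3) = t(t-2)^2(2t^2-4t+1)/2(t-1)^6$, whence $\gamma = (2t^2-4t+1)/(t-1)^2$, completing the proof.
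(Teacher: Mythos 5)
Your proof is correct, and it takes a genuinely different route from the paper's. Both compute $\alpha$ the same way, and both reduce to the same two triple-trace lemmas, but your handling of $\beta$ and $\gamma$ differs. For $\beta$, you use the diagonality of the trace pairing on $\{b_1,b_2,b_3\}$ and the relation $b_3 b_2 = -b_3$ (obtained by transposing $b_2 b_3 = -b_3$, after checking $b_2^t=b_2$, $b_3^t=b_3$), whereas the paper argues structurally from $(b_1+b_2)b_3^2 = 0$, which forces $(\alpha b_1+\beta b_2)(b_1+b_2)=0$ and determines $\beta$ in terms of $\alpha$ without a separate trace computation. For $\gamma$, you pair $b_3^2$ against $b_3$ and expand $\utr(b_3^3)$ directly, then use a relabeling symmetry ($\sigma=(3,4)(5,6)$ on the $6$-element amalgamation) to show $\utr(a_8 a_9^2)=\utr(a_8^3)$ and $\utr(a_9^3)=\utr(a_8^2 a_9)$; the paper instead pairs against $a_8$ and exploits the much quicker algebraic identity $a_9^2=a_8^2$, which follows from $a_9 = a_2 a_8 = a_8 a_2$ and $a_2^2 = a_1$. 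Your symmetry argument is a correct workaround for not spotting that identity, and your uniform use of the orthogonal basis $\{b_1,b_2,b_3\}$ is a clean organizing idea, though it buys you slightly more bookkeeping for $\gamma$. Everything checks: the claimed values $\utr(b_1)=t/2(t-1)^2$, $\utr(b_2^2)=-t(t-2)/4(t-1)^3$, and $\utr(b_3^2)=\tfrac12\mu(T_5)=t(t-2)^2/2(t-1)^4$ are right, the diagonality of the pairing holds because transposition preserves the three blocks $\{a_1,a_2\}$, $\{a_3,a_4,a_5,a_6\}$, $\{a_8,a_9\}$, and the final algebra gives the stated coefficients.
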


\begin{proof}
Write $b_3^2 = \alpha b_1+\beta b_2 + \gamma b_3$, where the coefficients are to be determined. Taking traces, we find $\alpha=\utr(b_3^2)/\utr(b_1)$, from which one easily obtains the formula for $\alpha$. Since $(b_1+b_2)b_3=0$, we find $(\alpha b_1+\beta b_2)(b_1+b_2)=0$, which allows one to compute $\beta$ in terms of $\alpha$. Finally, we have $\gamma = \utr(a_8 b_3^2)/\utr(a_8^2)$. We have
\begin{displaymath}
b_3^2 = \tfrac{1}{4} (a_8^2-a_8a_9-a_9a_8+a_9^2) = \tfrac{1}{4} (2a_8^2-a_8 a_9-a_9a_8),
\end{displaymath}
where we used the identity $a_9^2=a_8^2$, which follows since $a_9=a_2a_8=a_8a_2$. We thus see
\begin{displaymath}
\utr(a_8 b_3^2) = \tfrac{1}{2}( \utr(a_8^3) - \utr(a_8^2a_9) ),
\end{displaymath}
and the result follows from the previous two lemmas.
\end{proof}

\begin{lemma}
The elements 1, $b_3$, and $b_3^2$ form a basis for $\End(Y_-)$. The minimal polynomial of $b_3$ is $b_3 (b_3-1)(b_3-\xi)$, where $\xi=t(t-2)/(t-1)^2$.
\end{lemma}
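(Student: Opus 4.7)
The identity of $\End(Y_-)$ is $b_1$, since $b_1=\tfrac12(a_1-a_2)$ is the idempotent of $\End(Y)$ projecting onto $Y_-$. So the claim is that $\{b_1, b_3, b_3^2\}$ is a basis of $\End(Y_-)$ and that $b_3$ is annihilated by $X(X-1)(X-\xi)$ but no polynomial of lower degree. My plan is to prove both parts in one stroke: compute $b_3^3$ explicitly, read off a degree-three polynomial killing $b_3$, and then confirm linear independence of $\{b_1, b_3, b_3^2\}$ from the expansion of $b_3^2$ in the basis $\{b_1, b_2, b_3\}$.

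For the key computation, I write $b_3^3 = b_3 \cdot b_3^2$ and plug in
\[
b_3^2 = \alpha b_1 + \beta b_2 + \gamma b_3, \qquad \alpha=\tfrac{(t-2)^2}{(t-1)^2},\quad \beta=\tfrac{2(t-2)}{t-1},\quad \gamma=\tfrac{2t^2-4t+1}{(t-1)^2},
\]
from the preceding lemma. Two facts suffice to simplify: $b_1 b_3 = b_3$ (since $b_1$ is the identity of $\End(Y_-)$ and $b_3 \in \End(Y_-)$), and $b_2 b_3 = -b_3$ (already established). Substituting gives
\[
b_3^3 = (\alpha-\beta)\, b_3 + \gamma\, b_3^2,
\]
so $b_3$ is annihilated by $P(X) = X\bigl(X^2 - \gamma X - (\alpha-\beta)\bigr)$.

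The main technical point is to verify the factorization $X^2 - \gamma X - (\alpha-\beta) = (X-1)(X-\xi)$, which reduces to the two identities $\gamma = 1+\xi$ and $\beta - \alpha = \xi$. Each is a short rational-function manipulation:
\[
1 + \tfrac{t(t-2)}{(t-1)^2} = \tfrac{(t-1)^2 + t(t-2)}{(t-1)^2} = \tfrac{2t^2-4t+1}{(t-1)^2} = \gamma,
\]
\[
\tfrac{2(t-2)}{t-1} - \tfrac{(t-2)^2}{(t-1)^2} = \tfrac{(t-2)\bigl(2(t-1)-(t-2)\bigr)}{(t-1)^2} = \tfrac{t(t-2)}{(t-1)^2} = \xi.
\]

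Finally, to conclude that $P(X)$ is the minimal polynomial (which simultaneously gives linear independence of $\{b_1, b_3, b_3^2\}$), I observe that $\beta = 2(t-2)/(t-1) \neq 0$ because $t \notin \bN$ forces $t \neq 2$. Hence any relation $c_0 b_1 + c_1 b_3 + c_2 b_3^2 = 0$, expanded in the basis $\{b_1, b_2, b_3\}$, reads $(c_0 + c_2\alpha)b_1 + c_2\beta\, b_2 + (c_1+c_2\gamma)b_3 = 0$; the $b_2$-coefficient forces $c_2=0$, and then $c_0 = c_1 = 0$. So $\{b_1,b_3,b_3^2\}$ is a basis of the three-dimensional algebra $\End(Y_-)$, no polynomial of degree less than three annihilates $b_3$, and $P(X) = X(X-1)(X-\xi)$ is the minimal polynomial. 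The only real ``obstacle'' is notational bookkeeping: one must be careful to distinguish the identity of $\End(Y)$ from the identity of $\End(Y_-)$ when applying $b_1 b_3 = b_3$.
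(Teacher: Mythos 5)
Your argument is correct and follows the same line as the paper's proof: multiply the known expansion $b_3^2 = \alpha b_1 + \beta b_2 + \gamma b_3$ by $b_3$, use $b_1 b_3 = b_3$ and $b_2 b_3 = -b_3$ to get $b_3^3 = (\alpha-\beta)b_3 + \gamma b_3^2$, and observe that $\beta\ne 0$ forces $\{b_1, b_3, b_3^2\}$ to be independent, hence a basis of the three-dimensional $\End(Y_-)$. You have simply made explicit the ``elementary manipulation'' the paper leaves to the reader (the identities $\gamma = 1+\xi$ and $\beta-\alpha = \xi$), and your bookkeeping with the identity element $b_1$ is a helpful clarification; note too that $\End(Y_-)$ is commutative (as $Y_-$ is multiplicity-free), so the left-vs-right multiplication order in $b_3\cdot b_3^2$ is immaterial.
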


\begin{proof}
Since $b_2$ has non-zero coefficient in $b_3^2$, it follows that 1, $b_3$, and $b_3^2$ form a basis for $\End(Y_-)$. Multiplying the formula for $b_3^2$ by $b_3$, and using $b_2b_3=-b_2$, we find
\begin{displaymath}
b_3^3=\alpha b_3 - \beta b_3 + \gamma b_3^2,
\end{displaymath}
where the coefficients on the right are as in the above proof. This yields the minimal polynomial for $b_3$. An elementary manipulation puts it into the stated form.
\end{proof}

\subsection{Decomposition $Y_+$}

We now aim to do for $Y_+$ what we have just done for $Y_-$. The following is the main result:

\begin{proposition} \label{prop:Yplus}
The elements
\begin{align*}
f_0 &= \frac{(t-1)^2}{t} (2c_1+4c_2+c_3+2c_4+c_5) \\
f_1 &= -\frac{2(t-1)}{t-2}(c_1+c_2 - \tfrac{1}{t-1}f_1) \\
f_2 &= \frac{2(t-1)}{t-2}(c_1-c_4-\tfrac{1}{2} c_5-\frac{t}{2(t-1)} f_1 ) \\
f_3 &= (t-2)^2 c_1 + 2(t-1)(t-2) c_2 + (t-1)^2 c_4 \\
f_4 &= -\frac{t-1}{(t-2)^2} \cdot ((t-3)f_4-(t-3)c_4-(t-2)c_5)
\end{align*}
of $\End(Y_+)$ are the idempotents projecting onto $L_0, \ldots, L_4$.
\end{proposition}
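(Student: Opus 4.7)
The algebra $\End(Y_+)$ is $5$-dimensional and, because $Y_+$ is multiplicity-free, split semi-simple and isomorphic to $k^5$. Accordingly it carries exactly five primitive idempotents, one for each simple summand, and the strategy is to identify them one at a time following the template used for $\End(Y_-)$ in Proposition~\ref{prop:Yminus}.

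First I pin down $f_0$ and $f_1$ by independent, geometric means. For $f_0$, the last paragraph of \S\ref{ss:prelim} says that the projector of $\Vec_Y$ onto its trivial summand is $\mu(Y)^{-1}\sum_i a_i$; using $\mu(Y)=t/(t-1)^2$ and re-expanding the sum in the $c_i$ basis (e.g.\ $a_1+a_2=2c_1$, $a_3+a_4+a_5+a_6=4c_2$, etc.) reproduces the stated formula for $f_0$. For $f_1$, I imitate Lemma~\ref{lem:idemp}: the maps $Y\to X$ and $X\to Y$ defined by $T_2(1/\ast,2)$ and $T_2(3/\ast,4)$ yield, after symmetrizing with $a_2$ to land in $Y_+$, an endomorphism $\tfrac14(1+a_2)f'f(1+a_2)$ of $Y_+$ that factors through $X=L_0\oplus L_1$. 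Subtracting its $L_0$-component (a scalar multiple of $f_0$ pinned down by taking traces) and rescaling by the resulting trace produces the projector onto the copy of $L_1$ inside $Y_+$, which after simplification collapses to the stated expression for $f_1$.

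The remaining three idempotents live in the three-dimensional subalgebra $B=(1-f_0-f_1)\End(Y_+)(1-f_0-f_1)\cong k^3$. I expect the image in $B$ of a generic element of $\End(Y_+)$, for instance $c_3=a_7$ or $c_5=a_{10}$, to generate $B$ and to have cubic minimal polynomial with three distinct roots; the idempotents $f_2,f_3,f_4$ are then recovered by Lagrange interpolation and, after rewriting in the $c_i$ basis, should reduce to the stated formulas. Matching each of $f_2,f_3,f_4$ to the correct $L_i$ is done by comparing their categorical traces against the categorical dimensions already forced by the decomposition of $Y_+$, together with the identification of $L_2$ as the simple sharing a morphism with $X$.

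The main obstacle is purely computational: one needs the multiplication table of $\End(Y_+)$, or equivalently the triple traces $\utr(c_i c_j c_k)$. By the discussion of triple products in \S\ref{ss:prelim}, each such trace is a sum of $\mu_t(T)$ over triple self-amalgamations of $T_2$, i.e.\ trees on six labelled leaves whose three pairwise restrictions realize the chosen $c_i,c_j,c_k$. This is the same kind of enumeration performed for $Y_-$ in the lemmas preceding Proposition~\ref{prop:Yminus}, but in the symmetric case there are more non-vanishing contributions because no antisymmetrizing cancellations occur; the relevant trees are still drawn from the list $T_{11},\dots,T_{15}$ of Figure~\ref{fig:trees}. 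Once these traces are tabulated, the trace pairing (non-degenerate by \cite[Proposition~7.23]{repst}) produces the structure constants of $\End(Y_+)$, and the factorization of the minimal polynomial of a generator of $B$ then yields the explicit idempotents as claimed.
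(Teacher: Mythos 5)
Your proposal matches the paper's strategy almost exactly: $f_0$ from the general trivial-projector discussion, $f_1$ by symmetrizing the map $Y \to X \to Y$ as in Lemma~\ref{lem:idemp}, and the remaining idempotents by computing enough triple traces to determine a cubic minimal polynomial on the complement. The one place you diverge is that the paper interposes an ``enhanced-tree trick'' to nail down $f_2$ independently (composing $Y \to X' \to Y$ where $X'$ is a node-labelled enhanced tree), which reduces the remaining problem to a 2-dimensional subalgebra and lets them use multiplication by $c_5$ to split it; you instead propose to work directly with the 3-dimensional subalgebra $B$ and diagonalize a single generator. The paper explicitly flags its trick as ``not strictly necessary,'' so your shortcut is legitimate and arguably cleaner in exposition, at the modest cost of more triple-trace computations and the need to verify that your chosen generator actually has three distinct eigenvalues on $B$ (the paper's values $0$ and $2(t-1)^{-2}$ for $c_5$ on two of the idempotents show that $c_5$ alone might work, but you should check the third eigenvalue is different).

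Two small inaccuracies worth flagging. First, you say you will identify $L_2$ as ``the simple sharing a morphism with $X$''; that characterizes $L_1$, not $L_2$ (in the paper, $L_2$ is singled out by $\Hom$ with the enhanced object $X'$, not with $X$). This does not affect the substance, since the paper's own caveat acknowledges that prior to the proposition $L_2,L_3,L_4$ are not yet distinguished, but the phrasing should be corrected. Second, the trees arising in the triple-trace computations for $Y_+$ are not confined to $T_{11},\dots,T_{15}$; the paper's lemma uses $T_6$, $T_7$, $T_9$, $T_{10}$, $T_{12}$ as well, so expect a larger enumeration than in the $Y_-$ case.
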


The same caveat as with Proposition~\ref{prop:Yminus} applies. Taking traces, we find:

\begin{corollary}
We have
\begin{displaymath}
\udim(L_2)=-\frac{t}{t-1}, \quad
\udim(L_3)=\frac{t(t-2)^2}{2(t-1)^2}, \quad
\udim(L_4)=-\frac{t(t-3)}{2(t-1)}.
\end{displaymath}
\end{corollary}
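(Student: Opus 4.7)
The plan is to evaluate the categorical trace of each of the projectors $f_2$, $f_3$, $f_4$ from Proposition~\ref{prop:Yplus} and invoke the identity $\udim(L_i) = \utr(f_i)$, which holds because each $f_i$ is a primitive idempotent mapping onto the simple object $L_i$ in the semi-simple category $\cC_k(t)$.

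First I need the value of $\utr$ on the basis $\{c_1, \ldots, c_5\}$ of $\End(Y_+)$. Recall from \S\ref{ss:prelim} that $\utr(\phi_U)$ equals $\mu(T_2)$ when $U$ is the identity self-amalgamation of $T_2$ and vanishes otherwise; here $\mu(T_2) = t/(t-1)^2$ by Proposition~\ref{prop:mu-formula}. Since $a_1$ is the identity self-amalgamation and appears only in $c_1 = \tfrac{1}{2}(a_1+a_2)$, I obtain $\utr(c_1) = t/(2(t-1)^2)$ and $\utr(c_j) = 0$ for $j = 2,3,4,5$.

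Second I substitute these values into the formulas in Proposition~\ref{prop:Yplus}. The trace $\utr(f_3)$ drops out at once as a scalar multiple of $\utr(c_1)$, yielding $\udim(L_3) = (t-2)^2 \cdot t/(2(t-1)^2)$. For $f_2$ I also need $\utr(f_1)$, which equals $\udim(L_1) = 1/(t-1)$ as established in the analysis of $Y_-$ (Lemma~\ref{lem:idemp}), since $L_1$ is the common simple summand of $Y_+$ and $Y_-$. For $f_4$, whose stated expression contains $f_4$ itself on the right, applying $\utr$ to both sides reduces matters to a single linear equation in the unknown $\utr(f_4)$, which I solve directly; analogous bookkeeping handles $f_2$.

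The computation is essentially routine arithmetic; the main obstacle was identifying the projectors, which was overcome in Proposition~\ref{prop:Yplus}. As a consistency check, I verify that $\sum_{i=0}^{4} \udim(L_i) = \udim(Y_+) = \utr(c_1) = t/(2(t-1)^2)$, a short algebraic identity in $t$ that follows immediately from the claimed values for $\udim(L_0), \ldots, \udim(L_4)$.
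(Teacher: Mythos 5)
Your overall strategy is exactly the paper's: compute $\utr(f_i)$ for each primitive idempotent from Proposition~\ref{prop:Yplus}, using the fact that on the basis $c_1,\dots,c_5$ one has $\utr(c_1)=t/(2(t-1)^2)$ and $\utr(c_j)=0$ for $j\ge 2$. That part is fine, as is the observation that $\sum_{i=0}^4\udim(L_i)=\udim(Y_+)$.

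However, there is a genuine gap in the way you propose to handle $f_2$ and $f_4$. The formulas in Proposition~\ref{prop:Yplus} contain what are almost certainly typographical slips: the $f_1$ appearing on the right-hand side of the expressions for $f_1$ and $f_2$ should be $f_0$, and the $f_4$ appearing on the right of the expression for $f_4$ should be $f_3$. If you take the printed formulas at face value, the arithmetic you describe does not yield the claimed answer. Concretely, applying $\utr$ to the printed $f_4$ formula gives $\utr(f_4)=-\tfrac{(t-1)(t-3)}{(t-2)^2}\utr(f_4)$, since $\utr(c_4)=\utr(c_5)=0$; because $(t-2)^2+(t-1)(t-3)=2t^2-8t+7\ne 0$ generically, this forces $\utr(f_4)=0$, not $-t(t-3)/(2(t-1))$. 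Likewise, substituting $\utr(f_1)=1/(t-1)$ into the printed $f_2$ formula gives $\utr(f_2)=\tfrac{2(t-1)}{t-2}\bigl(\tfrac{t}{2(t-1)^2}-\tfrac{t}{2(t-1)^2}\bigr)=0$, not $-t/(t-1)$. In both cases the claimed value is obtained only after replacing $f_1\mapsto f_0$ and $f_4\mapsto f_3$; for instance, with $f_0$ in place of $f_1$ one gets $\utr(f_2)=\tfrac{2(t-1)}{t-2}\bigl(\tfrac{t}{2(t-1)^2}-\tfrac{t}{2(t-1)}\bigr)=-\tfrac{t}{t-1}$, and with $f_3$ in place of $f_4$ one gets $\utr(f_4)=-\tfrac{t-1}{(t-2)^2}\cdot(t-3)\cdot\tfrac{t(t-2)^2}{2(t-1)^2}=-\tfrac{t(t-3)}{2(t-1)}$. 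Indeed, taking the printed formulas literally would make your own consistency check fail, since with $\udim(L_2)=\udim(L_4)=0$ the sum of dimensions would be $t(t^2-2t+2)/(2(t-1)^2)$, not $t/(2(t-1)^2)$. So the step you describe as ``routine arithmetic'' is not routine without first diagnosing and correcting these self-references; you should flag that correction explicitly.
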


We now give a (somewhat abbreviated) explanation of the proof. First, the formula for $f_0$ follows from the general discussion in \S \ref{ss:prelim}. The formula for $f_1$ follows from a similar analysis as in Lemma~\ref{lem:idemp}.

We now use a trick to find $f_2$; this is not strictly necessary, but simplifies matters somewhat. Recall from \S \ref{ss:further}(a,b) that we can define $\cC_k(t)$ using ``enhanced trees.'' Let $X'=\Vec_{T_1'}$, where $T_1'$ is the enhanced tree with one node, and no other vertices; this is actually a summand of $\Vec_{T_3}$. We can analyze $\Vec_{X'}$ similar to how we analyzed $\Vec_X$; we find that it decomposes as $L_0 \oplus L_2$, where $L_2$ is a simple not isomorphic to $L_0$ or $L_1$. By analyzing maps $X' \to Y$, one can show that $L_2$ appears in $Y_+$. Computing the composition of maps $Y \to X' \to Y$ and projecting on $Y_+$ gives a scalar multiple of the projector onto $L_2$, and the scalar can be determined by taking traces, just as in Lemma~\ref{lem:idemp}. This yields the formula for $f_2$.

Here is the how we find the remaining two idempotents. First, note that these two idempotents are orthogonal to $f_0$, $f_1$, and $f_2$ under the trace pairing. It is straightforward to find a basis $u$ and $v$ for this orthogonal complement. Next, note that if $f$ is any primitive idempotent and $x$ is any element of $\End(Y_+)$ then $xf$ is a scalar multiple of $f$. Thus the remaining two idempotents are eigenvectors for the multiplication by $c_5$ map on $\operatorname{span}(u,v)$. We compute $c_5 u$ and $c_5 v$, using methods similar to the previous section, and then find the eigenvectors.

We can actually simplify this procedure a little with hindsight, since we already know the answers. First, it is a simple computation to check that $f_4$ and $f_5$ are orthogonal to the other idempotents under the trace pairing, and they are obviously linearly independent from each other. Next, write $c_5f_4=\alpha f_4+\beta f_5$. We can determine $\alpha$ and $\beta$ from $\utr(c_5 f_4)$ and $\utr(c_5^2 f_4)$. It is clear that $\utr(c_5f_4)=0$ since $c_5$ does not appear in $f_4$, and a non-trivial computation shows that $\utr(c_5^2 f_4)$ vanishes; thus $c_5f_4=0$. A similar procedure shows that $c_5 f_5 = 2 (t-1)^{-2} f_5$. Thus $f_4$ and $f_5$ are eigenvectors for multiplication by $c_5$, with distinct eigenvalues, and so must be scalar multiples of the remaining two idempotents. The scalar can be determined by the usual method (take traces).

We record some of the key intermediate computations here:

\begin{lemma}
We have the following:
\begin{align*}
\utr(c_5^2 c_2) &= \frac{2t(t-2)(t-3)}{(t-1)^5} \\
\utr(c_5^2 c_4) &= \frac{t(t-2)^2(t-3)(t-6)}{(t-1)^6} \\
\utr(c_5^3) &= - \frac{t(t-2)(t-3)(t-4)(t-5)}{(t-1)^6}
\end{align*}
\end{lemma}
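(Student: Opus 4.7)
All three quantities are traces of triple products of endomorphisms of $\Vec_Y$ where $Y = \Vec_{T_2(1,2)}$, so we apply the trace-of-triple-product formula from \S \ref{ss:prelim}: for self-amalgamations $U, V, W$ of $Y$,
\[
\utr(\phi_U \phi_V \phi_W) = \sum_{S} \mu(S),
\]
where $S$ ranges over trees on labels $\{1,\ldots,6\}$ whose restrictions to $\{1,2,3,4\}$, $\{1,2,5,6\}$, $\{3,4,5,6\}$ coincide with $U$, $V$, $W$ respectively. The first simplification is by symmetry: since $c_5 = a_{10}$ is the class of the 4-star $T_4(1,2,3,4)$, which is invariant under either left or right action of $a_2$, and since $c_2$ and $c_4$ are each $a_2$-averages, cyclicity of the trace gives $\utr(c_5^2 c_2) = \utr(c_5^2 a_3)$ and $\utr(c_5^2 c_4) = \utr(c_5^2 a_8)$.

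The core of the proof is then to enumerate the trees $S$ satisfying the three restriction constraints in each of the cases $\utr(c_5^3)$, $\utr(c_5^2 a_8)$, $\utr(c_5^2 a_3)$. For $\utr(c_5^3)$, all three restrictions are 4-stars, and I claim the only $S$ is the 6-star $T_9(1,\dots,6)$, so $\utr(c_5^3) = \mu(T_9)$. For $\utr(c_5^2 a_8)$, the third restriction is $T_5(3,5,4,6)$; the contributing $S$ are two labelings of $T_{10}$ (with leaves $\{1,2,3,5\}$ or $\{1,2,4,6\}$ on the valence-$5$ node) together with a single labeling of $T_{12}$ (with leaf pairs $\{3,5\}, \{1,2\}, \{4,6\}$ along the three nodes of the path), giving $\utr(c_5^2 a_8) = 2\mu(T_{10}) + \mu(T_{12})$. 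For $\utr(c_5^2 a_3)$, the third restriction is $T_3(3/5,4,6)$, so labels $3$ and $5$ live on a common physical leaf and $S$ has $5$ distinct leaves; the contributing $S$ are $T_6$ (the 5-star) and $T_7$ (the 2-node tree with $\{1,2,3/5\}$ on the valence-$4$ node and $\{4,6\}$ on the other), giving $\utr(c_5^2 a_3) = \mu(T_6) + \mu(T_7)$.

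The final step is direct substitution of the explicit formula for $\mu$ from Proposition \ref{prop:mu-formula}; for instance, in case (ii) one gets
\[
2 \cdot \frac{t(t-2)^2(t-3)(t-4)}{(t-1)^6} - \frac{t(t-2)^3(t-3)}{(t-1)^6} = \frac{t(t-2)^2(t-3)(t-6)}{(t-1)^6},
\]
matching the claimed value, and the other two cases are analogous.

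\textbf{Main obstacle.} The nontrivial step is the combinatorial case analysis showing that the trees I have listed are the \emph{only} ones satisfying the three restriction constraints. Reduced trees on at most $6$ leaves come in only a few shapes ($T_9$--$T_{15}$ and the smaller ones), so the enumeration is finite, but for each shape one must check whether some labeling produces the required triple of restrictions. Ruling out the ``wrong'' shapes depends on a careful tracking of which interior vertices get deleted in the reduction after restriction: for example, $T_{15}$ never produces a 4-star restriction since the central node has only valence $3$, while $T_{11}$ fails because the required $3{+}1$ split of labels across its two nodes is inconsistent across the three 4-subsets $\{1,2,3,4\}, \{1,2,5,6\}, \{3,4,5,6\}$. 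These verifications are elementary but need to be done one shape at a time.
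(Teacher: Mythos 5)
Your proposal follows exactly the paper's approach: reduce to $\utr(a_{10}^2 a_3)$, $\utr(a_{10}^2 a_8)$, $\utr(a_{10}^3)$ via the $\bZ/2$-invariance of $a_{10}$ and cyclicity, enumerate the trees $S$ satisfying the three restriction constraints via the triple-product trace formula of \S \ref{ss:prelim}, and sum the $\mu$-values. Your choice of which two index blocks carry the $a_{10}$'s differs from the paper's by a permutation of blocks (so your labelings like $T_3(3/5,4,6)$ differ from the paper's $T_3(1/5,2,6)$), but this is cosmetic: you land on the same isomorphism types with the same multiplicities, namely $\mu(T_9)$, $2\mu(T_{10})+\mu(T_{12})$, and $\mu(T_6)+\mu(T_7)$, and your final arithmetic (in particular $2(t-4)-(t-2)=t-6$ for the middle case) is correct. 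One point worth noting: the value $\mu(T_{12})=-\frac{t(t-2)^3(t-3)}{(t-1)^6}$ that your computation implicitly uses is the correct value from Proposition~\ref{prop:mu-formula} for the tree $T_{12}$ in Figure~\ref{fig:trees}; it disagrees with the displayed formula in the paper's preceding lemma on $\utr(a_8^3)$, but that discrepancy is a slip in the paper (the tree occurring nine times in that enumeration is $T_{14}$, whose measure is $\frac{t(t-2)^4}{(t-1)^6}$), not an error in your proof.
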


\begin{proof}
We comment on each computation in turn.

(a) Note that $c_5=a_{10}$ is left and right invariant under $\bZ/2$, and so we have $\utr(c_5^2 c_2)=\utr(a_{10}^2 a_3)$. Let $S$ be the set of trees on $1, \ldots, 6$ that restrict to $T_4(1,2,3,4)$, $T_4(3,4,5,6)$, $T_3(1/5,2,6)$. This set contains two elements, namely, $T_6(1/5,2,3,4,6)$ and $T_7(1/5,3,4,2,6)$. We thus find that the trace is equal to $\mu(T_6)+\mu(T_7)$, which yields the stated formula.

(b) Again, we can use $a_8$ instead of $c_4$. Let $S$ be the set of trees on $1, \ldots, 6$ which restrict to $T_4(1,2,3,4)$, $T_4(3,4,5,6)$, and $T_5(1,5,2,6)$. This set has three members, namely, $T_{10}(1,5,2,3,4,6)$, $T_{10}(2,6,1,3,4,5)$, and $T_{12}(1,5,3,4,2,6)$. We thus find that the trace is equal to $2\mu(T_{10})+\mu(T_{12})$, which yields the stated formula.

(c) Let $S$ be the set of trees on $1, \ldots, 6$ which restrict to $T_4(1,2,3,4)$, $T_4(1,2,5,6)$, and $T_4(3,4,5,6)$. The set $S$ contains exactly one element, namely, $T_9(1,2,3,4,5,6)$. We thus see that $\utr(c_5^3)=\mu(T_9)$, which yields the stated formula.
\end{proof}

\end{document}